\setlist{leftmargin=25pt}
 \def\NN{{\mathbb N}}  
 \def\RR{{\mathbb R}} \def\SS{{\mathbb S}} \def\TT{{\mathbb T}}
 \def\ZZ{{\mathbb Z}}
\newcommand{\ff}{{\mathbb f}}
\def\cC{{\mathcal C}}   \def\cO{{\mathcal O}}
\def\cF{{\mathcal F}}  \def\cL{{\mathcal L}}
\def\Wi{\widetilde}
\newtheorem{theorem}{{Theorem}}[section]
\newtheorem{proposition}[theorem]{{Proposition}}
\newtheorem{lemma}[theorem]{{Lemma}}
\newtheorem{corollary}[theorem]{{Corollary}}
\newtheorem{fact}[theorem]{{Fact}}
\newtheorem{claim}[theorem]{{Claim}}
\theoremstyle{definition}
\newtheorem{definition}[theorem]{{Definition}}
\theoremstyle{remark}
\newtheorem{remark}[theorem]{{Remark}}
\newtheorem{example}[theorem]{{Example}}
\title[Non-$\RR$-covered Anosov flows on hyperbolic $3$-manifolds]{Existence of arbitrary large numbers of non-$\RR$-covered Anosov flows on hyperbolic $3$-manifolds}
\author{Fran\c cois B\'eguin and Bin Yu}
\date{\today}
\begin{document}

\begin{abstract}
The purpose of this paper is to prove that, for every $n\in \NN$, there exists a closed hyperbolic $3$-manifold $M$ which carries at least  $n$ non-$\RR$-covered Anosov flows, that are pairwise orbitally inequivalent. Due to a recent result by Fenley, such Anosov flows are quasi-geodesic. Hence, we get the existence of hyperbolic $3$-manifolds carrying many pairwise orbitally inequivalent quasi-geodesic Anosov flows. One of the main ingredients of our proof is a description of the clusters of lozenges that appear in the orbit spaces of the Anosov flows that we construct. The number of lozenges involved in each cluster, as well as the \emph{orientation-type} of this cluster, provide powerful dynamical invariants allowing to prove that the flows are orbitally inequivalent. We believe that these dynamical invariants could be helpful in a much wider context.
\end{abstract}

\maketitle

\section{Introduction}
\label{s.int}
The flow $X_t$ of a nonsingular $C^1$ vector field $X$ on a closed Riemannian manifold $M$ is called an \emph{Anosov flow} if the manifold $M$ is a hyperbolic set for the flow $X_t$, that is if there exists an $X$-invariant splitting $TM =E^s \oplus \RR X\oplus E^u$ and some constant $C>0$, $\lambda >0$
such that:
 \begin{eqnarray*}
 \|D X_t (v)\| & \leq & C e^{-\lambda t} \|v\|, \mbox{ for any } v\in E^s \mbox{, } t\geq 0;\\
 \|D X_{-t} (v)\| & \leq & C e^{-\lambda t} \|v\|, \mbox{ for any } v\in E^u \mbox{, } t\geq 0.
 \end{eqnarray*}
Due to the definition, $3$ is the lowest possible dimension for manifolds to carry Anosov flows. The dynamics of an Anosov flow is uniformly chaotic, yet structurally stable : if a vector field $X$ on a closed manifold $M$ generates an Anosov flow $X_t$, then there exists a $C^1$-neighbourhood $\mathcal{U}$ of $X$, so that every vector field $Y$ in $\mathcal{U}$ generates an Anosov flow which is orbitally equivalent to $X$. In our context, two flows $X_t$ and $Y_t$ on a manifold $M$ are said to be \emph{orbitally equivalent} if there exists a self-homeomorphism of $M$ mapping the oriented orbits of $X_t$ to the oriented orbits of $Y_t$. 

Understanding the qualitative dynamics (that is the dynamics up to orbital equivalence) of $3$-dimensional Anosov flows is an important mathematical challenge, which builds bridges between different fields: dynamical system, geometry, topology, \emph{etc.}  See for instance \cite{Fen1}, \cite{Fen2}, \cite{Fen3}, \cite{Ba}, \cite{BaFe}, \cite{BarFe2}, \cite{BI}, \cite{BM}. One of the main questions in the area is to count the number of Anosov flows (up to orbit-equivalence) on a given $3$-manifold. In the 90s, Barbot found a first example of a manifold carrying two non-orbitally equivalent Anosov flows (\cite{Ba1}). In \cite{BBY}, Bonatti and the authors of the present paper showed that, for every $n\in \NN$, there exists a closed $3$-manifold $M$ which carries at least $n$ pairwise non-orbitally equivalent Anosov flows. Due to the construction procedure, all the manifolds exhibited in \cite{BBY} are toroidal. So, it was natural to ask if a similar results also held for hyperbolic $3$-manifolds. In fact, this is Problem 3.53 (C) in Kirby’s problem list \cite{Kir}, which was attributed to Christy. One should note that the problem is harder in the realm of hyperbolic $3$-manifolds, mostly because that, in toroidal case, the incompressible tori greatly help to distinguish Anosov flows from one another. In \cite{BM}, Bowden and Mann constructed hyperbolic 3-manifolds that admit arbitrarily many orbitally inequivalent Anosov flows, solving Kirby's Problem 3.53 (C). 

\subsection{Main result} \label{ss.mainresult}

The Anosov flows constructed by Bowden and Mann are $\RR$-covered. An Anosov flow on a closed $3$-manifold $M$ is said to be $\RR$-covered if the leaf space of weak stable foliation of the flow lifted to the universal cover is a Hausdorff space (if it is the case, a similar property holds for the weak unstable foliation, and both orbit spaces are homeomorphic to the real line). In the realm of Anosov flows on $3$-manifolds, $\RR$-covered case and non-$\RR$-covered case are quite different. See for instance \cite{Fen1}, \cite{Fen2}, \cite{Fen3}, \cite{Ba}. This raised the question of the existence  of hyperbolic 3-manifolds carrying arbitrarily many orbitally inequivalent non-$\RR$-covered Anosov flows. The purpose of this paper is devoted to construct such manifold and flows. More precisely, the main result of this paper is the following.

\begin{theorem}\label{t.main}
For every $n\in \NN$, there exists a closed hyperbolic $3$-manifold $M$ which carries at least  $n$ non-$\RR$-covered transitive Anosov flows which  are pairwise orbitally inequivalent.
\end{theorem}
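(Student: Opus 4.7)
The plan is to begin with the construction from [BBY], which for any fixed $n$ yields pairwise orbitally inequivalent transitive Anosov flows $X^{(1)}_t, \ldots, X^{(n)}_t$ on a common closed (toroidal) $3$-manifold $M_0$. These flows are obtained by gluing a fixed finite collection of hyperbolic plugs along boundary tori, different gluings producing non-equivalent flows. I would arrange, by a suitable choice of plugs, that the $X^{(i)}_t$ are non-$\RR$-covered, since such plug constructions generically introduce branching in the leaf space of the lifted weak foliations. Crucially, by construction the $X^{(i)}_t$ coincide as vector fields inside each plug, so any periodic orbit contained in a plug is simultaneously periodic for all $n$ flows.

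\textbf{Hyperbolization via simultaneous Dehn-Fried surgery.} Next, I would select a finite family of periodic orbits $\gamma_1, \ldots, \gamma_k$ lying inside the plugs. Performing Dehn-Fried surgery along each $\gamma_j$ with the same surgery slopes in all $n$ flows simultaneously produces Anosov flows $Y^{(1)}_t, \ldots, Y^{(n)}_t$ on a common closed $3$-manifold $M$. By Thurston's hyperbolic Dehn surgery theorem applied to $M_0 \setminus (\gamma_1 \cup \cdots \cup \gamma_k)$ (possibly after enlarging $\{\gamma_j\}$ so that this complement is itself hyperbolic), one can choose slopes for which $M$ is hyperbolic. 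Dehn-Fried surgery preserves the Anosov property and transitivity, and for generic choices of slopes one expects non-$\RR$-covered-ness to persist as well.

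\textbf{Main obstacle: distinguishing the resulting flows.} The central difficulty is proving that the $Y^{(i)}_t$ remain pairwise orbitally inequivalent on $M$. The invariants used in [BBY] to distinguish the $X^{(i)}_t$ depend on the JSJ decomposition of $M_0$, which disappears after hyperbolization. As announced in the abstract, the plan is to introduce new invariants based on combinatorial patterns in the bi-foliation $(\cF^s, \cF^u)$ on the orbit space $\cO$ of the lifted flow (for instance, specific configurations of Fenley lozenges, chains of lifts of periodic orbits, or local branching data) that depend only on the flow away from neighborhoods of the surgery orbits $\gamma_j$. One then has to prove: (i) that Dehn-Fried surgery does not destroy these patterns, since it modifies the flow only locally near the $\gamma_j$; and (ii) that the flows $X^{(i)}_t$ already exhibit distinct such patterns, so that these distinctions persist and witness the non-equivalence of the $Y^{(i)}_t$. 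Carrying out (i) and (ii) is the main technical work of the paper and is where I would expect the bulk of the effort to lie.
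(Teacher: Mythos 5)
Your outline reproduces the paper's general skeleton (glue hyperbolic plugs to get several transitive Anosov flows on a toroidal manifold, do Dehn--Fried surgery on periodic orbits crossing the transverse tori to hyperbolize, and distinguish the resulting flows by orbit-space patterns that survive the surgery), but the two places you defer are exactly where the proof lives, and the mechanism you propose for the second one does not work as stated. First, the claim that the $n$ flows ``coincide inside the plugs'' so that one can do surgery ``simultaneously'' on common orbits with common slopes is not available for free: the flows live on $U/\varphi_m$ for different gluings, the surgery orbits $\alpha_j^m$ depend on $m$, and the slope is taken with respect to the stable-manifold framing. To land on a single manifold $M$ one must produce a homeomorphism $W^m\to W^0$ carrying each oriented $\alpha_j^m$ together with its $s$-framing annulus $U(\alpha_j^m)$ to the corresponding data for $m=0$, and show these framing ribbons are annuli at all; this is Proposition \ref{p.embpre} (Section \ref{s.embp} and Appendix \ref{a.A}), a genuinely technical step involving the right/left half-twist comparison of ribbon links, not a bookkeeping remark.

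Second, and more seriously, your step (ii) presumes the pre-surgery flows already exhibit distinct surgery-robust patterns that merely persist. In the paper's construction the pre-surgery flows $Y_t^m$ are glued from identical plugs by maps isotopic to one another, and no inequivalence of the $Y_t^m$ is claimed or used; the distinguishing invariant does not exist before surgery. It is the chirality ($R$-type versus $L$-type, Definition \ref{d.RLtype}) of the $(4i+3)$-chain of Birkhoff annuli left over when exactly one of the $4i+4$ Birkhoff annuli carried by the transverse torus $T_i$ is destroyed, and it is created by the choice of which Reeb rectangle ($L_{2j}$ versus $R_{2j}$) the surgery orbit crosses, i.e.\ by the gluing itself. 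Making this an invariant requires (a) pairwise distinct chain lengths, engineered by taking a pseudo-Anosov map whose singularities have pairwise distinct prong numbers (Lemma \ref{l.psin}) --- a generic BBY example gives no such control; and (b) ruling out that the orbit-equivalence could trade old lozenges for new ones created by the surgery, which is done by homological intersection arguments with the tori $T_{2j}$ (Lemmas \ref{l.noadjloz}, \ref{l.noonenew}, Proposition \ref{p.EAP}); ``surgery only modifies the flow locally'' is not sufficient, since new Birkhoff annuli and lozenges genuinely appear. Finally, neither non-$\RR$-covered-ness nor hyperbolicity can be left to genericity: the paper proves the former for the surgered flows via the Reeb-annulus criterion of Lemma \ref{l.nonR} applied to a Reeb annulus of $\cF_m^s\cap T_i$ disjoint from the surgery curves, and the latter via Theorem \ref{t.hypknot} (the surgery link is full-filling and each component crosses a transverse torus) before invoking Thurston's Dehn filling theorem.
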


Recently, Fenley (\cite{Fen3}) proved a very beautiful theorem that states that every non-$\RR$-covered  Anosov flow on a closed hyperbolic $3$-manifold must be a quasigeodesic flow. This result is quite amazing since no example of quasigeodesic Anosov flows on closed hyperbolic $3$-manifolds was known before. Combining Fenley's result and Theorem \ref{t.main}, we immediately get the following consequence.

\begin{corollary}\label{c.c}
For every $n\in \NN$, there exists a closed hyperbolic $3$-manifold $M$ which carries at least  $n$ quasigeodesic Anosov flows which are pairwise orbitally inequivalent.
\end{corollary}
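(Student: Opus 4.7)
The plan is to deduce the corollary as an immediate combination of Theorem \ref{t.main} with Fenley's recent theorem from \cite{Fen3}; essentially no new work is required beyond invoking both results in the right order.

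First, I would apply Theorem \ref{t.main} to the given integer $n\in\NN$ to produce a closed hyperbolic $3$-manifold $M$ together with $n$ transitive, non-$\RR$-covered Anosov flows $X^{(1)}_t,\dots,X^{(n)}_t$ on $M$ which are pairwise orbitally inequivalent. This is the main contribution of the paper and is treated here as a black box.

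Next, I would apply Fenley's theorem (recalled in the paragraph preceding the corollary) \emph{separately} to each flow $X^{(i)}_t$. The hypotheses needed are precisely that the ambient manifold is a closed hyperbolic $3$-manifold and that the Anosov flow under consideration is non-$\RR$-covered; both are provided directly by Theorem \ref{t.main}. The conclusion of Fenley's theorem is exactly that $X^{(i)}_t$ is a quasigeodesic flow, so this single invocation upgrades each of the $n$ flows individually to quasigeodesic status.

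Finally, since pairwise orbital inequivalence of $X^{(1)}_t,\dots,X^{(n)}_t$ is already asserted by Theorem \ref{t.main} and is a property of the flows themselves (independent of the \emph{quasigeodesic} adjective), the same $n$ flows now witness the conclusion of the corollary. There is no genuine obstacle: the depth lies entirely in the two ingredients being assembled, and this deduction is the ``immediate consequence'' announced in the text.
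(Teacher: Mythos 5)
Your proposal is correct and follows exactly the paper's own (implicit) argument: apply Theorem \ref{t.main} to obtain the $n$ pairwise orbitally inequivalent non-$\RR$-covered Anosov flows on a closed hyperbolic $3$-manifold, then invoke Fenley's theorem from \cite{Fen3} to conclude each is quasigeodesic. Nothing more is needed.
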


\subsection{Outline of the proof of Theorem \ref{t.main}} \label{ss.sproof}
The proof of Theorem \ref{t.main} naturally splits in two parts. In the first part, given a positive integer $n$, we construct  a closed hyperbolic $3$-manifold and $2n+1$ transitive non-$\RR$-covered Anosov flows on this manifold. In the second part, we prove that $2n-1$ out of these $2n+1$ flows are pairwise orbitally inequivalent. In order to distinguish orbital equivalence classes of Anosov flows, we introduce a new dynamical invariant using chains of fundamental Birkhoff annuli. This dynamical invariant is one of the main novelties of our paper, and could probably be useful in many other situations.

\medskip

Let $n$ be a positive integer. The construction part roughly goes as follows:

We first consider a particular pseudo-Anosov diffeomorphism on a closed surface. A key property is that this pseudo-Anosov homeomorhism has many (namely $4n$) singular points, all with a different number of prongs (the $i^{th}$ singular point has $2i+2$ prongs). By doing a DpA-type (Derived from pseudo Anosov type) bifurcation and a suspension, we get an Axiom A flow $X_t^+$ on a fibered hyperbolic $3$-manifold $V^+$, so that the non-wandering set of the flow $X_t^+$ is the union of a non-trivial saddle basic set $\Lambda^+$, $2n$ periodic attracting orbits $\gamma_1^+, \gamma_3^+, \dots, \gamma_{4n-1}^+$ and $2n$ periodic repelling orbits $\gamma_2^+, \gamma_4^+, \dots, \gamma_{4n}^+$. 

We choose some small open tubular neighborhoods of the $4n$ periodic orbits $\gamma_1^+,\dots,\gamma_{4n}^+$, the vector field $X^+$ with respect to the flow $X_t^+$ being transverse to the boundary of these tubular neighborhoods. By cutting out these tubular neighborhoods from $V^+$, we get a compact $U^+$ with $4n$ boundary components, each of which is a $2$-dimensional torus. We pick a copy $U^-$ of $U^+$, and equip $U^-$ with the vector field $X^-:=-X^+$. Then we consider the disjoint union $U:=U^+\sqcup U^-$ endowed with the vector field $X$ which is equal to $X^+$ on $U^+$ and equal to $X^-$ on $U^-$. Note that  $U$ is a compact $3$-dimensional manifold with $8n$ boundary components, each of which is a $2$-dimensional torus. The vector field $X$ is transverse to $\partial U$, pointing inwards $U$ on $4n$ boundary components (the union of which is called the entrance boundary of $U$ and denoted by $\partial^{in} U$), and outwards $U$ on $4n$ boundary components (the union of which is called the exit boundary of $U$ and denoted by $\partial^{out} U$). 

Then we glue the exit boundary of $U$ on the entrance boundary of $U$ in order to get a closed $3$-manifold. More precisely, we carefully choose $2n+1$ gluing homeomorphisms  $\varphi^0, \varphi^1, \dots, \varphi^{2n}: \partial^{out} U \to \partial^{in} U$. For each $m\in\{0,\dots,2n\}$, we consider the closed $3$-dimensional manifold $W^m:=U/\varphi^m$.  Since $X$ the vector field tranverse to $\partial U$, the flow $X_t$ induces a flow $Y_t^m$ on $W^m$ for $m\in\{0,\dots,2n\}$. Moreover, the main theorem of \cite{BBY} ensures that the gluing homeomorphisms $\varphi^0, \varphi^1, \dots, \varphi^{2n}$ can be chosen so that $Y_t^m$ is a transitive Anosov flow for every $m\in\{0,\dots,2n\}$. The gluing homeomorphisms $\varphi^0, \varphi^1, \dots, \varphi^{2n}$ being chosen in the same isotopy class, the manifolds $W^1,\dots,W^{2n}$ are pairwise homeomorphic. Note that these manifolds are toroidal. More precisely, $W^m$ contains exactly $4n$ pairwise non-isotopic incompressible tori $T_1^m,\dots,T_{4n}^m$ corresponding to the $8n$ boundary components of $U$ (which have be glued together by pairs). 

Then, we prove that, for each $m\in\{0,\dots,2n\}$, we can choose $2n$ periodic orbits $\alpha_1^m, \dots, \alpha_{2n}^m$ of the Anosov flow $Y^m_t$ such that:
\begin{enumerate} 
\item[(i)] $\alpha_j^m$ intersects the torus $T_{2j-1}^m$ and the torus $T_{2j}^m$ at exactly one point, and is disjoint from the torus $T_k^m$ for $k\neq 2j-1, 2j$ 
\item[(ii)] there exists a homeomorphism $H^m: W^m \to W^0$ so that $H(\alpha_j^m)=\alpha_j^0$ and so that \hbox{$H(W^s_{loc}(\alpha_j^m))= W^s_{loc}(\alpha_j^0)$} for every $j\in \{1,2,\dots, 2n\}$.
\end{enumerate}

Then, starting from the Anosov flow $Y_t^m$ on $W^m$, we perform a \emph{Dehn-Fried surgery} of index $k$ on each of the orbits $\alpha_1^m,\dots,\alpha_{2n}^m$. This yields  a new Anosov flow $Z_t^m$ on a new manifold $M^m$. Due to Property~(ii) above, the closed $3$-manifolds $M^0,\dots,M^{2n}$ are pairwise homeomorphic (so, we denote all these manifolds by $M$). We also prove that, provided that the index $k$ of the Dehn-Fried surgeries is chosen large enough, the manifold $M$ is a hyperbolic. We also prove that the Anosov flow $Z_t^m$ is a non-$\RR$-covered Anosov flow for every $m$. This concludes the construction part: we have constructed $2n+1$ transitive non-$\RR$-covered Anosov flows $Z_t^0,\dots,Z_t^{2n}$ on a closed hyperbolic $3$-manifold $M$.

\medskip

Then we are left to prove that the  Anosov flows $Z_t^1,\dots,Z_t^{2n-1}$ ($2n-1$ of the $2n+1$ Anosov flows $Z_t^0,\dots,Z_t^{2n}$) are pairwise orbitally inequivalent. For this purpose, we will use the orbit space theory, which was developed mostly by Barbot and Fenley, see for instance \cite{Fen1}, \cite{Fen2} and \cite{Ba}. 

Recall that the orbit space $\cO_m$ of the Anosov flow $Z_t^m$ is by definition the quotient of the universal cover $\widetilde M$ of the manifold $M$ by the action of the lift of the flow $Z^m_t$. One can prove that $\cO_m$ is a topological plane. Moreover $\cO_m$ is equipped with  a pair of transverse one-dimensional foliations obtained by projecting the lifts of the weak stable and unstable foliations of $Z_t^m$. The geometry of the bi-foliation of $\cO_m$ encodes many important properties of the Anosov flow $Z_t^m$. For example, Barbot and Fenley have discovered that so-called \emph{lozenges} in $\cO_m$ correspond to fundamental Birkhoff annuli for the flow $Z^m_t$, and maximal chain of lozenges is closely related to a free homotopy classes of periodic orbits. 
Roughly speaking, a \emph{lozenge} in $\cO_m$ a quadrilateral with two opposite ideal vertices, bounded by two half stable leaves and two half unstable leaves, and so that the bifoliation is trivial in the interior of the quadrilateral.
Here a  \emph{Birkhoff annulus} $\Sigma$ for the flow $Z^m_t$ is a compact annulus immersed in $M$, bounded by two orbits of $Z^m_t$, whose interior is transverse to the vector field $Z^m$, and  a \emph{fundmental  Birkhoff annulus} $\Sigma$ is a Birkhoff annulus with a further condition: the  $1$-foliation induced by   the intersection of the interior of $\Sigma$ and the weak stable/unstable manifolds of $Z^m_t$ does not contain any compact leaf. 

We will study some particular sets of lozenges in $\cO_m$. We say that two lozenges in $\cO_m$ are \emph{edge-adjacent} if they share a common edge. A \emph{cluster} is a set of lozenges, in which any two lozenges can be connected by a finite sequence of lozenges, each lozenge of the sequence being edge-adjacent to the next one. See Subsection~\ref{ss.feap} for precise definitions. With the help of a series of observations and homology computations (Lemma \ref{l.Whomotopy}, \ref{l.noadjloz}, \ref{l.noonenew}), we can almost completely describe the \emph{maximal clusters} in $\cO_m$ (Proposition \ref{p.EAP}). 

Recall that the Anosov flow $Z^m_t$ was obtained from an Anosov flow $Y^m_t$ by performing Dehn-Fried surgeries on periodic orbits $\alpha_1^m,\dots,\alpha_{2n}^m$. The Anosov flow  $Y^m_t$ has many fundamental Brikhoff annuli. Indeed, we know that $Y^m_t$ has $4n$ transverse tori $T_1^m,\dots,T_{4n}^m$, and that the torus $T_{i}^m$ is isotopic to a cyclic sequence of $4i+4$ fundamental Birkhoff annuli of $Y^m_t$. A key point is that the periodic orbits  $\alpha_1^m,\dots,\alpha_{2n}^m$ are disjoint from most of these fundamental Birkhoff annnuli. As a consequence, although the transverse tori $T_1^m,\dots,T_{4n}^m$ are destroyed by the Dehn Fried surgeries, most of the fundamental Bikrhoff annuli of $Y^m_t$ are not destroyed by these surgeries. This yields fundamental Birkhoff annnuli for the flow $Z^m_t$. More precisely, the torus $T_i^m$ gives rise to a chain of $4i+3$ fundamental Birkhoff annuli for $Y^m_t$, and therefore a cluster of $4i+3$ lozenges in the orbit space $\cO_m$, which we denote by $C_i^m$. Of course, new fundamental Birkhoff annuli, and therefore new lozenges, can have been created by the Dehn surgeries. Nevertheless, we manage to prove that such new lozenges cannot aggregate and therefore cannot yield new clusters. This allows to use the clusters $C_1^m,\dots,C_{4n}^m$ as a kind of signature of the flow $Z^m_t$. 

Another important ingredient is the possibility to distinguish \emph{left-handed} and \emph{right-handed} fundamental Birkhoff annuli, which lead to an analogous distinction for (certain specific kinds of) clusters of lozenges. We observe that, for any given $i$, the handedness of the above mentioned cluster $C_i^m$ depends on $m$. This allows us to rule out the possibility of the existence of a orbit equivalence between the Anosov flows $Z^{m_1}_t$ and $Z^{m_2}_t$ for $m_1,m_2\in\{1\dots,2n-1\}$, $m_1\neq m_2$. In other words, this proves that the $2n-1$ Anosov flows $Z^{1}_t,\dots, Z^{2n-1}_t$ are pairwise orbitally inequivalent.

\section*{Acknowledgments}
The second author is supported by Shanghai Pilot Program for Basic Research, National Program for Support of Top-notch Young Professionals and the Fundamental Research Funds for the Central Universities.

\section{Construction of a hyperbolic plug}
\label{s.hplug}
A \emph{hyperbolic plug} is a pair $(U,X)$ where $U$ is a compact orientable $3$-dimensional  manifold with boundary, and $X$ a $C^1$ vector field on $U$, which is transverse to the boundary $\partial U$, and such that the maximal invariant set $\Lambda:=\bigcap_{t\in\RR} X_t(U)$ is a saddle hyperbolic set for the flow $X_t$.  The purpose of the present section is to build a hyperbolic plug with some specific topological and dynamical features. Before proceeding to the construction, we will recall some basic definitions and properties concerning hyperbolic plugs. A much more detailed discussion, including proofs of the properties stated below, can be founded in \cite[section 3]{BBY}.

\subsection{General facts about hyperbolic plugs}\label{ss.Gfhplug}

Let $(U,X)$ be such a hyperbolic plug, and $\Lambda:=\bigcap_{t\in\RR} X_t(U)$ be its maximal invariant set. For sake of simplicity, we assume that $\Lambda$ is a transitive set, not reduced to a periodic orbit. A periodic orbit $\omega\in\Lambda$ is called a \emph{u-boundary periodic orbit} if one stable separatrix of $\omega$ (that is one connected component of $W^s(\omega)\setminus\omega$) is disjoint form $\Lambda$. Such a stable separatrix disjoint from $\Lambda$ is called a \emph{free stable separatrix}. Roughly speaking, this means that $\omega$ is a u-boundary periodic orbit if $W^u(\omega)$ is not accumulated by $W^u(\Lambda)$ on both sides. An easy argument, based on the local product structure of locally maximal hyperbolic sets, implies that $\Lambda$ contains only finitely many $u$-boundary periodic orbits. Moreover, our hypothesis implies that the lamination $W^u(\Lambda)$ has no isolated leaf; it follows that every $u$-boundary periodic orbit $\omega$ has two stable separatrices ($W^s(\omega)$ is a cylinder, not a Möbius band), only one of which is a free separatrix. Of course, \emph{$s$-boundary periodic orbits} are defined similarly, and bear similar properties. 

Since the vector field $X$ is transverse to $\partial U$, one may decompose $\partial U$ as 
$$\partial U=\partial^{in} U\sqcup\partial^{out} U$$ 
where $X$ is pointing inwards $U$ along $\partial^{in} U$, and outwards $U$ along $\partial^{out} U$. The sets $\partial^{in} U$ and $\partial^{out} U$ are called the \emph{entrance boundary} and the \emph{exit boundary} of $U$. Both are unions of connected components of $\partial U$. The set 
$$\cL^s:=W^s(\Lambda)\cap\partial^{in} U$$
is called the \emph{entrance lamination} of the hyperbolic plug $(U,X)$. It can be shown that $\cL^s$ is a lamination, with one-dimensional leaves, embedded in the surface $\partial^{in}U$, with quite specific properties: 
\begin{enumerate}
\label{properties-laminations}
\item[(i)] Every compact leaf of $\cL^s$ is the intersection of $\partial^{in} U$  with the free stable separatrix of a $u$-boundary periodic orbit. Conversely, the intersection of $\partial^{in} U$ with the free stable separatrix of a $u$-boundary periodic orbit is a compact leaf of $\cL^s$, but we will not use this fact. In particular, $\cL^s$ has only finitely many compact leaves. 
\item[(ii)] Every non-compact leaf of $\cL^s$  accumulates on a compact leaf at both ends\footnote{Every non-compact leaf is the image of a injective immersion of the real line, therefore has two ends.}.
\end{enumerate}
Let $c$ be a closed leaf of $\cL^s$. According to item~(i) above, $c$ is included in the stable manifold $W^s(\omega)$ of a $u$-boundary periodic orbit $\omega$. Since $c$ is included in $\partial^{in} U$, it is transverse to the vector field $X$. It follows that $c$ is freely homotopic to $\omega$ in the cylinder $W^s(\omega)$ (indeed every unoriented simple closed curve drawn on $W^s(\omega)$ and transverse to $X$ is freely homotopic to $\omega$ in $W^s(\omega)$). Hence, the orientation of the orbit $\omega$ (given by the vector field $X$) induces an orientation of the closed leaf $c$, called the \emph{dynamical orientation} of $c$. The hyperbolicity of the flow $X_t$ easily implies:
\begin{enumerate}
\item[(iii)] The holonomy of $\cL^s$ along a dynamically oriented closed leaf $c$ is a dilation.
\end{enumerate}
The set $\cL^u:=W^u(\Lambda)\cap\partial^{out} U$ bears analogous properties. Item (iii) must be replaced by: 
\begin{enumerate}
\item[(iii')] The holonomy of $\cL^u$ along a dynamically oriented closed leaf $c$ is a contraction.
\end{enumerate}
Items (iii) and (iii') can be rephrased as follows: the compact leaves of $\cL^s$ and $\cL^u$ can be oriented so that the holonomy along each compact leaf is a contraction. This orientation coincides with the dynamical orientation in the case of $\cL^s$, and is opposite to the dynamical orientation in the case of $\cL^u$. 

\subsection{Construction of a hyperbolic plug $(U,X)$}

We are now in a position to state the main result of the section.

\begin{theorem}\label{t.fhplug}
For every $n\in\ZZ_{>0}$, there exists a hyperbolic plug $(U, X)$ satisfying the \hbox{properties (1)...(8)} below. 
\begin{enumerate}
\item[(1)] $U$ has two connected components $U^-$ and $U^+$, and there exists an involutive diffeomorphism $\sigma:U\righttoleftarrow$ such that $\sigma(U^\pm)=U^\mp$. 
\item[(2)] $U^\pm$ is an orientable fibered hyperbolic $3$-manifold with $4n$ tori boundary components $T_1^\pm,\dots, T_{4n}^\pm$. We choose the indexing  so that $\sigma(T_i^\pm)=T_i^\mp$.
\item[(3)] $\sigma_*X=-X$ (hence $\sigma$ conjugates the flow $X_t$ on $U^\pm$ to the reverse flow $X_{-t}$ on $U^\mp$), 
\item[(4)] $X$ is transverse to the fibration of $U\pm$.
\item[(5)] $X$ is pointing inwards $U$ along $T_1^-, T_3^-,\dots, T_{4n-1}^-$ and $T_2^+, T_4^+, \dots, T_{4n}^+$, and outwards $U$ along $T_1^+, T_3^+,\dots, T_{4n-1}^+$ and $T_2^-, T_4^-,\dots, T_{4n}^-$. 
\end{enumerate}
In accordance with item (5), for $i=1,\dots,4n$, we denote:
$$T_i^{in}=\left\{\begin{array}{ll} T_i^+ & \mbox{if $i$ is even}\\T_i^- & \mbox{if $i$ is odd}\end{array}\right.\mbox{ and }\quad  T_i^{out}=\left\{\begin{array}{ll} T_i^- & \mbox{if $i$ is even}\\T_i^+ & \mbox{if $i$ is odd}\end{array}\right.,$$
so that the entrance boundary and the exit boundary of the plug $(U,X)$ are
$$\partial^{in} U=T_1^{in}\sqcup\dots\sqcup T_{4n}^{in}\quad\mbox{ and }\partial^{out} U=T_1^{out}\sqcup\dots\sqcup T_{4n}^{out}$$
See Figure \ref{f.UXt}. Note that $\sigma(T_i^{in})=T_i^{out}$ for every $i$. We denote by $\Lambda^-$, $•\Lambda^-$ and $\Lambda$ the maximal invariant sets of $U^-$, $U^+$ and $U$:
$$\Lambda^-:=\bigcap_{t\in\RR}X_t(U^-)\quad\quad\Lambda^+:=\bigcap_{t\in\RR}X_t(U^+)\quad\quad\Lambda:=\Lambda^-\sqcup\Lambda^+=\bigcap_{t\in\RR}X_{t}(U)$$
Recall that $(U,X)$ being a hyperbolic plug entails that $\Lambda^-$, $\Lambda^+$ and $\Lambda$ are saddle hyperbolic sets. 
For $i=1,\dots,4n$, we consider the laminations 
$$\cL_i^s:=W^s(\Lambda) \cap T_i^{in}\quad\mbox{and}\quad\cL_i^u:=W^u(\Lambda) \cap T_i^{out},$$
so that the entrance and exit laminations of the plug $(U,X)$ are 
$$\cL^s=W^s(\Lambda)\cap\partial^{in} U=\cL_1^s\sqcup\dots\sqcup\cL_{4n}^s \quad\mbox{and}\quad \cL^u=W^u(\Lambda)\cap\partial^{out} U=\cL_1^u\sqcup\dots\sqcup\cL_{4n}^u.$$
Note that $\sigma(\cL_i^s)=\cL^u_i$ for every $i$.
\begin{enumerate}
\item[(6)] The saddle hyperbolic sets $\Lambda^-$ and $\Lambda^+$ are transitive. 
\item[(7)] $\cL_i^s$ consists in $2i+2$ Reeb lamination\footnote{By a \emph{Reeb lamination annulus}, we mean a compact annulus $A\simeq (\RR/\ZZ)\times [0,1]$ endowed with a one-dimensional lamination $\cL$ such that the leaves of $\cL$ are leaves of a Reeb foliation on $A$, such that $\partial A\subset \cL$ and $\cL\cap\mathrm{int}(A)\neq\emptyset$.} annuli $\{A_i^{j,s}\}_{j\in\ZZ/(2i+2)\ZZ}$, where $A_i^{j-1,s}$ and $A_i^{j,s}$ share a boundary component $c_i^{j,s}$ for every $j$. See Figure \ref{f.T1in} for the case $i=1$. 
\item[(7')] Similarly, $\cL_i^u$ consists in $2i+2$ Reeb  lamination annuli $\{A_i^{j,u}\}_{j\in\ZZ/(2i+2)\ZZ}$, where $A_i^{j-1,u}$ and $A_i^{j,u}$ share a boundary component $c_i^{j,u}$ for every $j$. 
\end{enumerate}
For each $i$, we choose the indexing of the Reeb annuli of the laminations $\cL^s_i$ and $\cL^u_i=\sigma(\cL^s_i)$ so that $\sigma(A_i^{j,s})=A_i^{j,u}$ and  $\sigma(c_i^{j,s})=c_i^{j,u}$ for every $j$. According to item~(i) page~\pageref{properties-laminations}, if $i$ is even (odd), 
the compact leaf $c_i^{j,s}$ (resp. $c_i^{j,u}$) is the intersection of the torus  $T_i^{in}$ (resp.  $T_i^{out}$) with the free stable (resp. unstable) separatrix of a $u$-boundary (resp. $s$-boundary) periodic orbit $\gamma_i^{j,+}$. We define $\gamma_i^{j,-}=\sigma(\gamma_i^{j,+})$ that is an s/u-boundary periodic orbit in $\Lambda^-$.
The converse assertion in item~(i) page~\pageref{properties-laminations} implies that 
$$\Gamma:=\{\gamma_i^{j,\epsilon}\}_ {i=\{1,\dots,4n\}, j\in\ZZ/(2i+2)\ZZ, \epsilon\in\{+,-\}}$$
is the collection of \emph{all} the s/u-boundary periodic orbits in $\Lambda$. 
\begin{enumerate}
\item[(8)] If $\omega$ is a periodic orbit which is not in $\Gamma$, then $\omega$ is not freely homotopic (as an unoriented curve) to any other periodic orbit in $\Lambda$, nor to any closed curve in $\partial U$.
\end{enumerate}
\end{theorem}

 \begin{figure}[htp]
\begin{center}
  \includegraphics[totalheight=6.8cm]{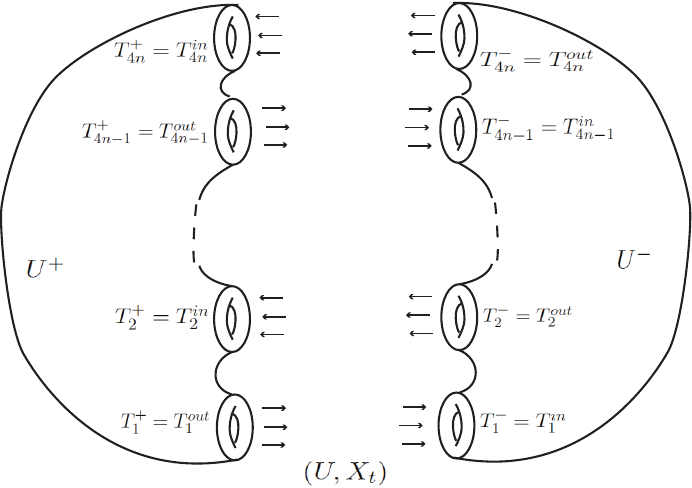}\\
  \caption{The notations for $\partial U$}\label{f.UXt}
\end{center}
\end{figure}

\begin{figure}[htp]
\begin{center}
\includegraphics[totalheight=5.2cm]{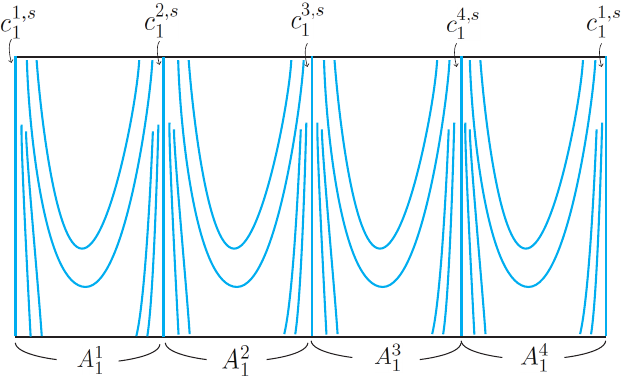}
\caption{The lamination $\cL_1^s$ on the torus $T_1^{in}$}\label{f.T1in}
\end{center}
\end{figure}

Theorem~\ref{t.fhplug} relies on the following lemma, which is a particular case of a result of Masur and Smillie \cite[Theorem 2]{MS}:

\begin{lemma}\label{l.psin}
For every $n\in \ZZ^+$, there exists an orientation preserving  pseudo-Anosov homeomorphism $\bar f^+:S^+\to S^+$, where $S^+$ is a closed orientable surface of genus $4n^2+n+1$, such that $\bar f^+$ has exactly $4n$ singular points $\bar x_1^+,\dots,\bar x_{4n}^+$, where $\bar x_i^+$ is a $(2i+2)$-prongs singularity. 
\end{lemma}

\begin{proof}[The proof of Theorem \ref{t.fhplug}]
We begin our construction by considering the pseudo-Anosov homeomorphism $\bar f^+:S^+\to S^+$ provided by lemma~\ref{l.psin}. Replacing $\bar f^+$ by a power if necessary, we may assume that $\bar f^+$ fixes each prong of each of its singularities. We denote by $V^+$ the mapping torus of $\bar f^+$. The suspension flow $\bar X_t^+$ of $\bar f^+$ is a pseudo-Anosov flow on $V^+$, with $4n$ singular orbits $\bar \gamma_1^+,\dots,\bar \gamma_{4n}^+$, where $\bar \gamma_i^+$ has a $(2i+2)$ stable and (2i+2) unstable separatrices, each of which wraps only once around $\gamma_i^+$. For definitions and details on pseudo-Anosov flows, see e.g.~\cite{Mo}.

Then we perform DpA bifurcations. More precisely, we consider the flow $X_t^+$ on $V^+$ obtained from $\bar X_t^+$ by performing a attracting DpA-bifurcation along the singular orbits $\bar \gamma_1^+, \bar\gamma_3^+, \dots,\bar\gamma_{4n -1}^+$ and a repelling DpA-bifurcation along the singular orbits  $\bar\gamma_2^+, \bar\gamma_4^+, \dots,\bar\gamma_{4n}^+$. Then $X_t^+$ is an Axiom A flow whose non-wandering set consists in a non-trivial saddle basic piece $\Lambda^+$, of $2n$ attracting periodic orbits $\gamma_1^+, \gamma_3^+, \dots, \gamma_{4n-1}^+$ and of $2n$ repelling periodic orbits $\gamma_2^+, \gamma_4^+, \dots, \gamma_{4n}^+$. Moreover, for $i$ odd (resp. even), each stable (resp. unstable) separatrix of the singular orbit $\bar \gamma_i^+$ of $\bar X_t^+$ gives rise to $2i+2$ s-boundary (resp. u-boundary) periodic orbits $\gamma_i^{1,+},\dots,\gamma_i^{2i+2,+}$ for the flow $X_t^+$. See Figure~\ref{f.DpA} for the case $i=1$. Details on DA and DpA-bifurcations for flows\footnote{Note that instead of considering the suspension of the pseudo-Anosov homoeomorphism $\bar f^+$ and then performing DpA bifurcations on the suspended flow, one could have first perform DpA bifurcations on the pseudo-Anosov homoeomorphism $\bar f^+$ and then consider the suspension of the resulting axiom A diffeomorphism. This leads to the same flow.} can be found in \cite{Mo} or \cite[Section 8.1]{BBY} for example. 

\begin{figure}[htp]
\begin{center}
\includegraphics[totalheight=6.3cm]{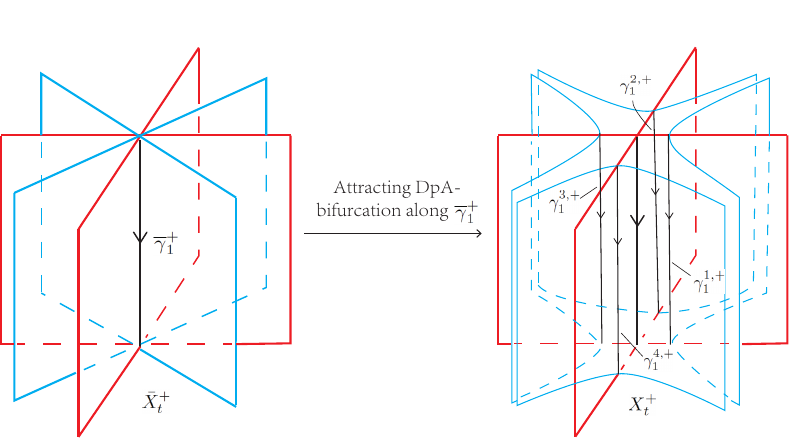}\\
\caption{Left: the union of blue (red) surface is the singular stable (resp. unstable) manifold of $\overline{\gamma}_1^+$ of $\overline{X}_t^+$. Right: every blue surface is the stable manifold of some $\gamma_1^{j,+}$ ($j=1,2,3,4$) of $X_t^+$. }\label{f.DpA}
\end{center}
\end{figure}

Now we cut out tubular neighborhood of the attracting and repelling periodic orbits. More precisely, we consider $U^+ := V^+ \setminus (\Omega_1^+\sqcup \dots \sqcup \Omega_{4n}^+)$ where $\Omega_i^+$ is a small open tubular neighborhood of the periodic orbit $\gamma_i^+$, such that $X^+$ is transverse to the torus $T_i^+:=\partial \Omega_i^+$. By construction, $U^+$ is a compact manifold with $4n$  tori boundary components $T_1^+,\dots,T_{4n}^+$, the vector field $X^+$ is transverse to $\partial U^+$, pointing inwards $U^+$ along $T_2^+,T_4^+,\dots,T_{4n}^+$ and outwards $U^+$ along $T_1^+,T_3^+,\dots,T_{4n-1}^+$. Also by construction, $U^+$ is a fibered manifold with pseudo-Anosov type monodromy (the interior of $U^+$ is the mapping torus the restriction of $\bar f$ to $S^+ - \{x_1^+,\dots,x_{4n}^+\}$). Hence, by Thurston's hyperbolization theorem for fibered $3$-manifolds, $U^+$ is a fibered hyperbolic manifold. The maximal invariant set of $U^+$ for the flow $X_t^+$ is nothing but the saddle hyperbolic set $\Lambda^+$. 

To obtain the plug $(U,X)$, we consider a copy $\{U^-, X^-,\Lambda^-,T_1^-,\dots,T_{4n}^-\}$ of $\{U^+, X^+,$ $\Lambda^+,T_1^+,\dots,T_{4n}^+\}$. We set $U =U^+ \sqcup U^-$. We define a vector field $X$ on $U$ by setting $X = X^+$ on $U^+$ and $X =-X^-$ on $U^-$.  We denote by $\sigma:U\righttoleftarrow$ the involutive diffeomorphism defined by the identification between $U^+$ and $U^-$.

Items (1),\dots,(5) are immediate consequences of the construction. 

Items (6) and (6') follow from \cite[item 8 of Proposition 8.1]{BBY} which describes the entrance/exit laminations of hyperbolic plugs that are obtained by performing DpA bifurcations on (pseudo-)Anosov flow, and cutting out tubular neighborhoods of the attracting/repelling periodic orbits. 

We are left to prove item (7). Although we are interested in homotopies in the manifold with boundary $U^+$, the proof essentially takes place in closed manifold $V^+$. Since $\bar X_t^+$ is the suspension of a pseudo-Anosov homeomorphism, Nielsen theory (see \cite{JG}), implies that~:
\begin{enumerate}
\item[(a)] a periodic orbit of $\bar X_t^+$ is never homotopically trivial in $V^+$, and two different periodic orbits of $\bar X_t^+$ are never homotopic in $V^+$ (even when considered as unoriented curves).
\end{enumerate}
Now, recall that $X_t^+$ was obtained from  $\bar X_t^+$ by performing DpA bifurcations on the singular orbits $\bar\gamma_1^+,\dots,\bar\gamma_{4n}^+$. It follows (see \emph{e.g.} \cite[Section 8.1]{BBY}) that there is a continuous onto map $\pi:V^+\to V^+$, homotopic to identity, which sends each orbit of $X_t^+$ to an orbit of $\bar X_t^+$. Moreover, this map $\pi$ is ``almost one-to-one". More precisely:
\begin{itemize}
\item[(b)] The inverse image under $\pi$ of a non-singular periodic orbit of $\bar X_t^+$ is a single periodic orbit of $X^t$, which is not a boundary periodic orbit. 
\item[(c)] For $i$ odd (resp. even), the inverse image under $\pi$ of the singular periodic orbit $\bar\gamma_i^+$ is the union of the bassin of the attracting (resp. repelling) periodic orbit $\gamma_i^+$ and of the stable (resp. unstable) manifolds of the boundary periodic orbits  $\gamma_i^{1,+},\dots,\gamma_i^{2i+2,s}$. 
\end{itemize}
Consider a periodic orbit $\omega\subset\Lambda^+$ of $X_t^+$, which is not a boundary periodic orbit. If $\omega'\subset\Lambda^+$ is another periodic orbit of $X_t^+$, then items (b) and (c) imply that $\omega$ and $\omega'$ are mapped by $\pi$ on two different periodic orbits $\bar\omega$ and $\bar\omega'$ of $\bar X_t^+$. According to item (a), the orbits  $\bar\omega$ and $\bar\omega'$ are not freely homotopic (as unoriented curves) in $V^+$. Since $\pi$ is homotopic to identity, it follows that $\omega$ and $\omega'$ are not  homotopic in $V^+$, and therefore not  homotopic in $U^+$. Now let $c$ be a closed curve drawn on a connected component $T_i^+$ of $\partial U^+$. Recall that $T_i^+$ bounds a tubular neighborhood $\Omega_i^+$ of the orbit $\gamma_i^+$ in $V^+$. It follows that, in $V^+$, the curve $c$ is either homotopically trivial, or homotopic to a power of the attracting/repelling periodic orbit $\gamma_i^+$. Hence, the curve $c$ is  either homotopically trivial in $V^+$, or homotopic to a power of the singular orbit $\bar\gamma_i^+$ of $\bar X_t^+$. On the other hand, items (b) and (c) and  the fact that $\pi$ is homotopic to identity imply that $\omega$ is homotopic to a non-singular periodic orbit of $\bar X_t^+$. Using item (a), we conclude that $\omega$ and $c$ are not freely homotopic in $V^+$, therefore not freely homotopic in $U^+$. Item (8) is proved.
\end{proof}

\subsection{Orientation of the hyperbolic plug $(U,X)$}\label{ss.orientation}
We fix an orientation on $U$ as follows: we choose an orientation on the connected component $U^-$, and we endow $U^+$ with the opposite of the push forward of this orientation under $\sigma$. With this connvention,
\begin{enumerate}
\item[(O1)] $\sigma$ is orientation reversing. 
\end{enumerate}
Now recall that $U^+$ and $U^-$ are fibered manifolds, and that the vector field $X$ is transverse to the fibrations. So we may endow the fibers in $U^-$ and $U^+$ with orientation such that:
\begin{enumerate}
\item[(O2)] The vector field $X$ intersects positively the fibers of the fibration of $U^-$ and $U^+$.
\end{enumerate}
With this convention, we have the following important property:
\begin{enumerate}
\item[(O3)] Endow the compact leaves of the lamination $\cL^s$ and $\cL^u$ so that the holonomy of $\cL^{s/u}$ along each oriented compact leaf is a dilation. Then the compact leaves of $\cL^u$ (resp. $\cL^s$) intersect positively (resp. negatively) the fibers of the fibration of $U^\pm$.
\end{enumerate}
\begin{proof}[Proof of Property (O3)]
Each compact leaf $c$ of the lamination $\cL^{s/u}$ is freely homotopic to a $u/s$-boundary periodic orbit $\omega$ (see item (i) page~\pageref{properties-laminations}). By definition, the dynamical orientation of the compact leaf $c$  is induced by the flow-orientation of the periodic orbit $\omega$ (see again page~\pageref{properties-laminations}). In view of Property~(O2), this implies that, for their dynamical orientation, the compact leaves of the lamination $\cL^{s/u}$ intersect positively the fibers of $U^\pm$. Yet, we wish to consider another orientation of the compact leaves. We endow the compact leaves of $\cL^{s/u}_i$ with the orientation so that the holonomy of the lamination $\cL_i^{s/u}$ is a contraction along every oriented compact leaf. According to properties (iii) and (iii') on page~\pageref{properties-laminations}, this orientation agrees (resp. disagrees) with the dynamical orientation for the leaves of $\cL^{s/u}$ (resp. $\cL^{u/s}$). Hence, for the ``contracting holonomy orientation", the compact leaves of $\cL^u$ (resp. $\cL^s$) intersect positively (resp. negatively)  the fibers of $U^\pm$.
\end{proof}

\noindent For $i\in\{1,\dots,4n\}$ and $\epsilon=\pm$, the boundary torus $T_i^\epsilon$ is trivially foliated by boundary curves of the fibers of the fibration of $U^\epsilon$. The fibers being oriented by Property (O2), their boundary curves inherit of orientations. 
\begin{enumerate}
\item[(O4)] We choose the cyclic ordering of the annuli $\{A_i^{j,s/u}\}_{j\in\ZZ/(2i+2)\ZZ}$ in $T_i^{\epsilon}$ so that it agrees with the orientation of the boundary curves of the fibers of $U^\epsilon$ when $i$ is even, and disagrees when $i$ is odd.
\end{enumerate}

\begin{remark}
Property (O4) is meaningful since the cores of the Reeb annuli are not in the same homotopy class as the boundary curves of the fibers in $T_i^\epsilon$: this follows from Property~(O3). Moreover Property (O4) is compatible with the requirement that $\sigma(A_i^{j,s})=A_i^{j,u}$ for every $i$ and $j$: this can be checked using Properties~(O1),~(O2) and(O3).
\end{remark}

The following proposition provides a convenient way to track the orientation of $U$. 

\begin{proposition}\label{p.orient}
Let $x$ be a point on the compact leaf $c_i^{j,\nu}$ of the lamination $\cL_i^\nu$ for some $i\in\{1,\dots,4n\}$, some $j\in \ZZ/(2i+2)\ZZ$ and some $\nu\in\{s,u\}$ (hence $x\in T_i^\epsilon$ where $\epsilon=+$ if $i$ is even and $\nu=s$ or $i$ is odd and $\nu=u$, and $\epsilon=-$ otherwise, a).  Let $(\vec{e_1}(x),\vec{e_2}(x),\vec{e_3}(x))$ be a basis of the tangent space $T_{x}U^\epsilon$ with the following characteristics (see Figure~\ref{f.orientation}):
\begin{itemize}
\item[---] $\vec{e_1}(x)$ is tangent to the torus $T_i^\epsilon$, transverse to the compact leaf $c_i^{j,\nu}$, pointing towards the Reeb annulus $A_i^{j+1,\nu}$;
\item[---] $\vec{e_2}(x)$ is tangent to the compact leaf $c_i^{j,\nu}$ pointing in the direction where the holonomy of the lamination $\cL^\nu$ is contracting if $i$ is odd, and expanding if $i$ is even;
\item[---] $\vec{e_3}(x)=X(x)$. 
\end{itemize}
Then $(\vec{e_1}(x),\vec{e_2}(x),\vec{e_3}(x))$ is positively oriented.
\end{proposition}

\begin{proof}
Define another basis $(\vec{f_1}(x),\vec{f_2}(x),\vec{f_3}(x))$ of  $T_{x}U^\epsilon$ as follows:
\begin{itemize}
\item[---] $\vec{f_1}(x)=\vec{e_1}(x)$ if $i$ is even and $\vec{f_1}(x)=-\vec{e_1}(x)$ if $i$ is odd;
\item[---] $\vec{f_2}(x)=\vec{e_2}(x)$ if ($\nu=u$ and $i$ even) or ($\nu=s$ and $i$ odd), and $\vec{f_2}(x)=\vec{e_2}(x)$ if ($\nu=u$ and $i$ odd) or ($\nu=s$ and $i$ even)
\item[---] $\vec{f_3}(x)=-X(x)=-\vec{e_3}(x)$ if $\nu=s$ (equivalently $T_i^\epsilon=T_i^{in}$) and $\vec{f_3}(x)=X(x)=\vec{e_3}(x)$ if $\nu=u$ (equivalently $T_i^\epsilon=T_i^{out}$). 
\end{itemize}
By Property (O3), $\vec{f_2}(x)$ intersects positively the fibers of $U^\epsilon$. The vector $\vec{f_3}(x)$ is pointing outwards $U^\epsilon$, hence it can be used as an outward-pointing normal vector to defined the orientation of the boundary of the fibers in $U^\epsilon$. Finally, according to Property (O4), $\vec{f_1}(x)$ agrees with the orientation of the fibers of $U^\epsilon$. It follows that the basis $(\vec{f_2}(x),\vec{f_3}(x),\vec{f_1}(x))$ is positively oriented. Hence the basis $(\vec{f_1}(x),\vec{f_2}(x),\vec{f_3}(x))$ is also positively oriented. Now, using the every definition of $(\vec{f_1}(x),\vec{f_2}(x),\vec{f_3}(x))$ and considering four different cases (depending on whether $i$ is odd or even, and $\nu=s$ or $u$), one easily checks that that $(\vec{f_1}(x),\vec{f_2}(x),\vec{f_3}(x))$ always defines the same orientation as $(\vec{e_1}(x),\vec{e_2}(x),\vec{e_3}(x))$. We conclude that $(\vec{e_1}(x),\vec{e_2}(x),\vec{e_3}(x))$ is positively oriented.
\end{proof}

\begin{figure}[htp]
\begin{center}
 \includegraphics[width=8cm]{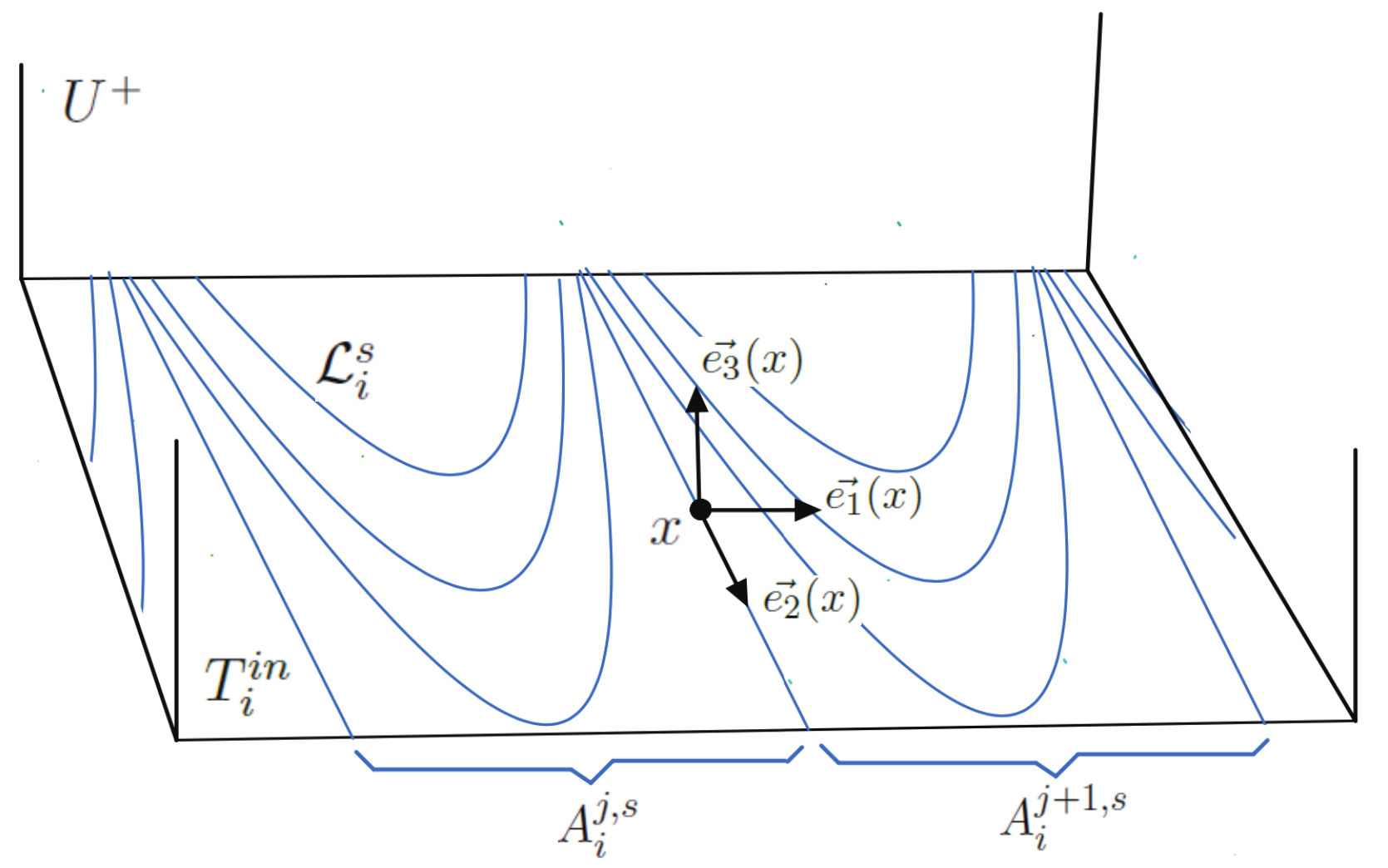}
 \caption{The frame $(\vec{e}_1 (x), \vec{e}_2 (x), \vec{e}_3 (x))$ for $i$ even and $\nu=s$}
 \label{f.orientation}
\end{center}
\end{figure}

\section{Construction of Anosov flows on a toroidal manifold}
\label{s.con}
In this section, given a positive integer $n$, we will build $2n+1$ transitive Anosov flows carried by $2n+1$ homeomorphic toroidal manifolds. The construction involves the hyperbolic plug $(U,X)$ provided by Theorem~\ref{t.fhplug} and some gluing diffeomorphisms  $\varphi^0,\dots,\varphi^{2n}: \partial^{out}U\to\partial^{in} U$. More precisely, we will prove the following result.

\begin{theorem}\label{t.con}
Let $n\in\ZZ_{>0}$ and $(U,X)$ be the hyperbolic plug provided by Theorem~\ref{t.fhplug}. 
\begin{enumerate}
\item[(1)] For $m=0, \dots, 2n$, there exists a diffeomorphism $\varphi^m: \partial^{out}U \to \partial^{in} U$ such that 
\begin{enumerate}
\item the flow $Y_t^m$ induced by $X_t$ on the closed manifold $W^m :=U/\varphi^m$ is a transitive Anosov flow,
\item $\varphi^m$ is isotopic to the restriction of the involution $\sigma$ to $\partial^{out}U$ (in particular, $\varphi^m$ maps $T_i^{out}$ to $T_i^{in}$ for every $i=1,\dots,4n$),
\item for $i\leq 2m$, for $j\in\ZZ/(2n+2)\ZZ$, the annulus $\varphi^m (A_i^{j,u})$ intersects the annuli $A_i^{j-1,s}$ and $A_i^{j,s}$ (and does not intersect $A_i^{\ell,s}$ for $\ell\neq j-1,j$),
\item for $i>2m$, for $j\in\ZZ/(2n+2)\ZZ$, the annulus $\varphi^m (A_i^{j,u})$ intersects the annuli $A_i^{j,s}$ and $A_i^{j+1,s}$ (and does not intersect $A_i^{\ell,s}$ for $\ell\neq j,j+1$).
\end{enumerate}
\end{enumerate}
Let $\pi^m: U \to W^m=U/\varphi^m$ be the natural projection, and \hbox{$T_i^m:= \pi^m (T_i^{out})=\pi^m (T_i^{in})$.}
\begin{enumerate}
\item[(2)] For $m=0, \dots, 2n$ and $j=1,\dots,2n$, there exists a periodic orbit $\alpha_j^m$ of the Anosov flow $Y_t^m$ with the following properties:
\begin{enumerate}
\item $\alpha_j^m$ intersects the torus $T_{2j-1}^m$ at one point $q_{2j-1}^m\in\mathrm{int}\left(\pi^m (A_{2j-1}^{1,s}) \cap \pi^m (A_{2j-1}^{1,u})\right)$,
\item $\alpha_j^m$ intersects the torus $T_{2j}^m$ at one point $q_{2j}^m\in\mathrm{int}\left(\pi^m (A_{2j}^{1,s}) \cap \pi^m (A_{2j}^{1,u})\right)$,
\item  $\alpha_j^m$ does not intersect the torus $T_k^m$ for $k\neq 2j-1,2j$.
\end{enumerate}
\item[(3)] For $m=0,\dots, 2n$, for $j=1,\dots, 2n$, the local stable manifold $W^s_{loc}(\alpha_j^m)$ of the orbit $\alpha_j^m$ is an annulus (not a Möbius band).
\item[(4)] For $m=1,\dots, 2n$, there exists a homeomorphism $H^m: W^m \to W^0$ which maps the oriented periodic orbit $\alpha_j^m$ to the oriented periodic orbit $\alpha_j^0$, and the local stable manifold $W^s_{loc}(\alpha_j^m)$ to the local stable manifold $W^s_{loc}(\alpha_j^0)$ for $j=1,\dots, 2n$.
\end{enumerate}
\end{theorem}

The remainder of the section is devoted to the proof of Theorem~\ref{t.con}. Item (1), (2), (3) and (4) will be proved respectively in Subsection~\ref{ss.gluing-maps}.~\ref{ss.periodic-orbits},~\ref{ss.local-stable-annulus} and~\ref{ss.homeomorphic}. 

\subsection{Construction of the gluing maps}
\label{ss.gluing-maps}

This subsection is devoted to the proof of item~(1) of Theorem~\ref{t.con}. A key tool is the main theorem of~\cite{BBY}. Another tool is the existence of standard models for (pairs of) foliations with $(2i+2)$ Reeb annuli in a two-torus. Having presented these tools, we construct a bifoliation $(\cF^s,\cF^u)$ on the hyperbolic plug $(U,X_t)$. This bifoliation provides a kind of coordinate system on $\partial U$. Using this coordinates system, we are able to define some gluing maps  ${\widebar\varphi}^0,\dots,{\widebar\varphi}^{2n}$ enjoying all the required properties, except that the flow induced by $X_t$ on the closed manifold $U/{\widebar\varphi}^m$ need not be an Anosov flow. Then, using the main result of~\cite{BBY}, we will be able to deform ${\widebar\varphi}^m$ into a gluing map $\varphi^m$ so that the flow induced by $X_t$ on $U/\varphi^m$ is an Anosov flow. 

\subsubsection{Hyperbolic plugs and Anosov flows}
\label{sss.BBY}
Let us recall some definitions and the main result of~\cite{BBY}. A lamination $\cL$ in a surface $S$ is said to be \emph{filling} if any connected component of $S\setminus\cL$ is a topological disc, whose accessible boundary is made of two non-compact leaves of $\cL$ that are asymptotic to each other at both ends. Two laminations $\cL,\cL'$ embedded in a surface $S$ are said to be \emph{strongly transverse} if they are transverse, and if the closure of every connected component $D$ of $S\setminus (\cL\cup\cL')$ is a closed disc, so that $\partial D$ is a $4$-gon with two opposite sides included in leaves of $\cL$ and two opposite sides included in some leaves of $\cL'$. Let $(U, X)$ be a hyperbolic plug with maximal invariant set $\Lambda=\bigcap_{t} X_t(U)$, and $\varphi: \partial^{out} U \to \partial^{in} U$ be a diffeomorphism. Denote by $\cL^s:=W^s(\Lambda)\cap\partial^{in} U$ and $\cL^u=W^u(\Lambda)\cap\partial^{out} U$ the entrance lamination and the exit lamination of the plug $(U,X)$. The laminations $\cL^s$ is said to be \emph{filling} if any connected component of $\partial^{in} U\setminus\cL^s$ is a topological disc, whose accessible boundary is made of two non-compact leaves of $\cL^s$ that are asymptotic to each other at both ends. A similar definition holds for $\cL^u$. The gluing map $\varphi$ is said to be  a \emph{strongly transverse gluing diffeomorphism} if the laminations  $\cL^s$ and $\varphi_{\ast} \cL^u$ (both embedded in $\partial^{in} U$) are strongly transverse. Two strongly transverse gluing maps $\varphi_0,\varphi_1: \partial^{out} U \to \partial^{in} U$ are said to be \emph{strongly isotopic} if they can be connected by a continuous one-parameter family $(\varphi_s)_{s\in [0,1]}$ of strongly transverse gluing maps. The main result of \cite{BBY} can be stated as follows.

\begin{theorem}
\label{t.BBY}
Let $(U, X)$ be a hyperbolic plug with filling laminations and \hbox{$\widebar\varphi:\partial^{out} U \to \partial^{in} U$} be a strongly transverse gluing diffeomorphism. Then, up to replacing $X$ by an orbitally equivalent vector field, there exist a strongly transverse gluing diffeomorphism $\varphi: \partial^{out} U \to \partial^{in} U$ strongly isotopic to $\widebar\varphi$, and such that the flow $Y_t$ induced by $X_t$ on the closed manifold $M := U/\varphi$ is an Anosov flow. Moreover, $Y_t$ is transitive provided that $\widebar\varphi(W^u(\Lambda_1)\cap\partial^{out} U)\cap (W^s(\Lambda_2)\cap\partial^{in} U)\neq \emptyset$ for any two connected components $\Lambda_1$ and $\Lambda_2$ of the maximal invariant of $(U,X)$.
\end{theorem}

\subsubsection{Canonical models for (pairs of) foliations with $(2i+2)$ Reeb annuli}
\label{sss.canonical}
Consider the vector fields $\eta^s, \eta^u$ on $\RR^2$ defined by
\begin{eqnarray*}
\eta^s(x,y) & := & \sin (\pi x) \frac{\partial}{\partial x}   + \cos (\pi x)\frac{\partial}{\partial y},\\
\eta^u(x,y) & := & \sin (\pi (x+1/2)) \frac{\partial}{\partial x}   + \cos (\pi (x+1/2))\frac{\partial}{\partial y}.
\end{eqnarray*}
For $i=1,\dots,4n$, the un-oriented orbits of the vector fields $\eta^s$ and $\eta^u$ induce two one-dimensional  foliations $\ff^s_i$ and $\ff_i^u$ on the torus 
$$\TT_i:=(\RR/(2i+2)\ZZ)\times (\RR/\ZZ).$$
 Observe that $\ff_i^u$ is the image of $\ff_i^s$ under the horizontal translation $(x,y)\mapsto (x+1/2,y)$. One may easily verify that the foliations  $\ff_i^s$ and $\ff_i^u$ are transverse to each other. One may also check that, for $j\in\ZZ/(2i+2)\ZZ$, the annulus $[j,j+1]\times(\RR/\ZZ)$ is a Reeb annulus for the foliation $\ff^s_i$ and the annulus $[j-1/2,j+1/2]\times(\RR/\ZZ)$ is a Reeb annulus for the foliation $\ff^u_i$. In  particular, each of the two foliations $\ff^s_i,\ff_i^u$ has exactly $2i+2$ compact leaves, bounding $2i+2$ Reeb annuli. 
 
 \begin{figure}[htp]
\begin{center}
  \includegraphics[totalheight=7cm]{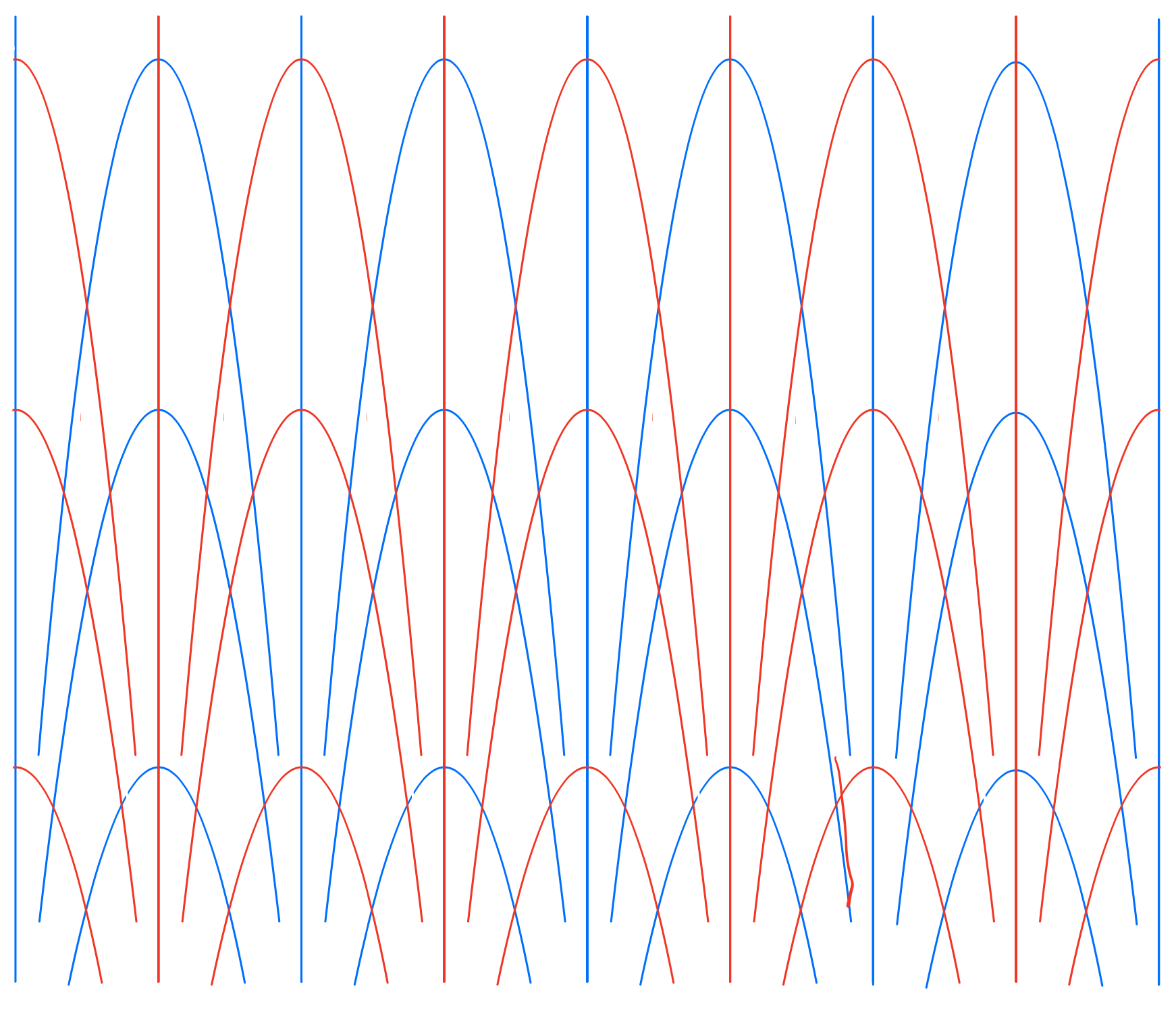}
  \caption{The pair of transverse foliations $(\ff_i^s,\ff_i^u)$ on the torus $\TT_i:=(\RR/(2i+2)\ZZ)\times (\RR/\ZZ)$ for $i=1$.}\label{f.model-foliattions}
\end{center} 
\end{figure}

 The following fact results from standard arguments:

\begin{fact}
\label{f.canonical}
If $f^s$ is a one-dimensional foliation on a two-torus $T$ with $2i+2$ compact leaves bounding $2i+2$ Reeb annuli with contracting holonomy on each compact leaf, then there is a homeomorphism from $T$ to the standard torus $\TT_i$ mapping $f^s$ to the standard foliation $\ff_i^s$. If $(f^s,f^u)$ is a pair of transverse one-dimensional foliations on a two-torus $T$, each with $2i+2$ compact leaves bounding $2i+2$ Reeb annuli and with contracting holonomy along each compact leaf, then there is a homeomorphism from $T$ to the standard torus $\TT_i$ mapping $(f^s,f^u)$ to the standard bi-foliation $(\ff_i^s,\ff_i^u)$.
\end{fact}

\subsubsection{A bifoliation $(\cF^s,\cF^u)$ on $U$}
\label{sss.bifoliation}
The proposition below asserts the existence of a bifoliation on $U$ which will play a crucial role in the definition of the gluing maps. 

\begin{proposition}
\label{p.construction-foliations}
There exists a pair transverse $X_t$-invariant two-dimensional foliations $(\cF^s,\cF^u)$ on $U$ with the following properties.
\begin{enumerate}
\item The involution $\sigma$ maps $\cF^s$ to $\cF^u$ and vice-versa. 
\item The foliation $\cF^s$ (resp. $\cF^u$) contains $W^s(\Lambda)$ (resp. $W^u(\Lambda)$) as a sublamination. 
\item Denote by $f_i^{s,in}$, $f_i^{s,out}$, $f_i^{u,in}$ and $f_i^{u,out}$ the one-dimensional foliations induced\footnote{The vector field $X$ is tangent to the leaves of  $\cF^s$ and $\cF^u$ and transverse to $\partial U$. Hence the foliations $\cF^s$ and $\cF^u$ are transverse to $\partial U$. As a consequence, $\cF^s$ and $\cF^u$ induce a pair of transverse one-dimensionnal foliations on the torus $T_i^{in/out}$ for every $i$.} by $\cF^s$ and $\cF^u$ on the tori $T_i^{in}$ and $T_i^{out}$. Then, each of four foliations has $2i+2$ compact leaves bounding $2i+2$ Reeb annuli with contracting holonomy along each compact leaf. 
\end{enumerate}
\end{proposition}

\begin{proof}
If $i$ is an even integer, the torus $T_i^{in}$ is a connected component of $\partial^{in} U^+$. The lamination $\cL^s_i=W^s(\Lambda^+)\cap T_i^{in}$ has $2i+2$ compact leaves bounding $2i+2$ Reeb lamination annuli. We may consider a one-dimensional foliation $f_i^{u,in}$ on $T_i^{in}$ which has exactly  $2i+2$ compact leaves bounding $2i+2$ Reeb lamination annuli, and which is transverse to the lamination $\cL^s_i$ (the existence of such a foliation $f_i^{u,in}$ follows from example from the first assertion of Fact~\ref{f.canonical} which allows to conjugate to a model). By doing so for $i=2,4,\dots,2n$, we obtain a foliation on $\partial^{in} U^+$. Pushing this foliation by the flow $X_t$ yields a two-dimensional $X_t$-invariant foliation on $\bigcup_{t\geq 0} X^t(\partial^{in} U^+)=U^+\setminus W^u(\Lambda^+)$. The $\lambda$-lemma implies that this foliation extends to a foliation $\cF^{u}$ on $U^+$. By construction,  $\cF^{u}$ contains $W^u(\Lambda^+)$ as a sublamination, and is transverse to $W^s(\Lambda^+)$.

If $i$ is an odd integer, the torus $T_i^{out}$ is a connected component of $\partial^{out} U^+$. Hence, we may consider the foliation $f^{u,out}_i$ induced by $\cF^u$ on $T_i^{out}$. Note that $f^{u,out}_i$ contains $\cL^u_i=W^u(\Lambda^+)\cap T_i^{out}$ as a sublamination. It follows that $f^{u,out}_i$ has exactly $2i+2$ compact leaves bounding $2i+2$ Reeb lamination annuli: since every connected component of $T_i^{out}\setminus\cL_i^u$ is simply connected, the Poincaré-Bendixon theorem implies that the only compact leaves of $f^{u,out}_i$ are those of $\cL^u_i$. So we may choose a one-dimensional foliation $f_i^{s,out}$ on $T_i^{out}$ which has exactly  $2i+2$ compact leaves bounding $2i+2$ Reeb lamination annuli, and which is transverse to $f^{s,out}_i$.  By doing so for $i=1,3,\dots,2n-1$, we obtain a foliation on $\partial^{out} U^+$. Pushing this foliation by the reverse flow $X_{-t}$ yields a two-dimensional $X_t$-invariant foliation on $\bigcup_{t\leq 0} X_t(\partial^{out} U^+)=U^+\setminus W^s(\Lambda^+)$. The $\lambda$-lemma implies that this foliation extends to a foliation $\cF^{s}$ on $U^+$. By construction,  $\cF^{s}$ contains $W^s(\Lambda^+)$ as a sublamination, and is transverse to the foliation $\cF^u$. Notice that, for $i$ even, the foliation $f^{s,in}_i$ induced by $\cF^{s}$ on $T_i^{in}$ contains $\cL^s_i$ as a sublamination; it follows that $f^{s,in}_i$ has exactly  $2i+2$ compact leaves bounding $2i+2$ Reeb lamination annuli.
Moreover, due to the hyperbolicity of $X_t$ on $\Lambda^+$, the holonomy along each compact leaf
of these $1$-foliations is contracting. See items (iii) and (iii') in Section \ref{ss.Gfhplug}.

At this stage, we have defined a pair of foliations $(\cF^{s},\cF^{u})$ on $U^+$ satisfying all the desired properties. To complete the proof, it remains to define $\cF^s$ and $\cF^u$ on $U^-$ by setting $\cF^s_{|U^-}:=\sigma_* \cF^u\mid U^+$ and $\cF^s_{| U^+}:=\sigma_* \cF^u_{|U^-}$. Using the equality $\sigma_*X_{|U^+}=-X_{|U^-}$, one may easily translate the properties of $\cF^s_{|U^+}$ and $\cF^u_{|U^+}$ into analogous properties for $\cF^s_{|U^-}$ and $\cF^u_{|U^-}$. 
\end{proof}

\subsubsection{Construction of the gluing maps ${\widebar\varphi}^0,\dots,{\widebar\varphi}^{2n}$ and the (non-Anosov) flows $\widebar Y_t^0,\dots,\widebar Y_t^{2n}$}
\label{sss.construction-psi}
Our next goal is to prove the following result, which might be considered as a downgraded version of item~(1) of Theorem~\ref{t.con}:

\begin{proposition}
\label{p.construction-psi}
For $m=0, \dots, 2n$, there exists a strongly transverse gluing map \hbox{${\widebar\varphi}^m: \partial^{out}U \to \partial^{in} U$} such that 
\begin{enumerate}
\item[(a)] the flow $\widebar Y_t^m$ induced by $X_t$ on the closed manifold $\widebar W^m :=U/{\widebar\varphi}^m$ preserves a pair of transverse foliations $(\widebar \cF^{s,m},\widebar \cF^{u,m})$ (induced by the pair of foliations $(\cF^s,\cF^u)$ on $U$), 
\item[(b)] ${\widebar\varphi}^m$ is isotopic to the restriction of the involution $\sigma$ to $\partial^{out}U$ (in particular, it maps $T_i^{out}$ to $T_i^{in}$ for every $i=1,\dots,4n$),
\item[(c)] for $i\leq 2m$, for $j\in\ZZ/(2n+2)\ZZ$, the annulus ${\widebar\varphi}^m (A_i^{j,u})$ intersects the annuli $A_i^{j-1,s}$ and $A_i^{j,s}$ (and does not intersect $A_i^{\ell,s}$ for $\ell\neq j-1,j$),
\item[(d)] for $i>2m$, for $j\in\ZZ/(2n+2)\ZZ$, the annulus ${\widebar\varphi}^m (A_i^{j,u})$ intersects the annuli $A_i^{j,s}$ and $A_i^{j+1,s}$ (and does not intersect $A_i^{\ell,s}$ for $\ell\neq j,j+1$).
\end{enumerate}
\end{proposition}

The gluing map ${\widebar\varphi}^m$ will be given by an explicit formula (in some coordinate system) which will prove to be quite helpful to prove item (3) and (4) of Theorem~\ref{t.con}. Note that the flow $\widebar Y_t^m$ is not an Anosov flow (even though it preserves a pair of transverse foliations which ``play the role of stable/unstable foliations"). 

\begin{proof}[Proof of Proposition~\ref{p.construction-psi}]
Let $\cF^s$, $\cF^u$, $f_i^{s,in}$, $f_i^{s,out}$, $f_i^{u,in}$ and $f_i^{u,out}$ be the foliations provided by Proposition~\ref{p.construction-foliations}. For $i=1,\dots,4n$,  $(f_i^{s,in},f_i^{u,in})$ is a pair of transverse one-dimensional foliations on the torus $T_i^{in}$, and each of these two foliations has exactly $2i+2$ compact leaves bounding $2i+2$ Reeb annuli. The foliation $f^{s,in}_i$ contains $\cL^s_i$ as a sublamination, and therefore the Reeb annuli of $f^{s,in}_i$ are nothing but the Reeb annuli $\{A_i^{j,s}, j\in \ZZ/(2i+2)\ZZ\}$ of the lamination $\cL^s_i$. According to Fact~\ref{f.canonical}, there exists a homeomorphism $\xi_i:T_i^{in}\to \TT_i=(\RR/(2i+2)\ZZ)\times (\RR/\ZZ)$ 
which maps the bi-foliation $(f_i^{s,in},f_i^{u,in})$ to the model bifoliation $(\ff_i^s,\ff_i^u)$. Postcomposing $\xi_i$ by the symmetry $(x,y)\mapsto (-x,y)$ and an integer translation, we may assume that $\xi_i$ maps the Reeb annulus  $A_i^{j,s}$ on the annulus $[j,j+1]\times (\RR/\ZZ)$ for every $j$. We will use $\xi_i$ as a chart or a coordinate system on the torus $T_i^{in}$. For every $v\in\RR$, we consider the diffeomorphism $\tau_v:\partial^{in} U\to\partial^{in} U$ such that $\tau_v$ preserves $T_i^{in}$ for each $i\in\{1,\dots,4n\}$ and $\tau_{v|T_i^{in}}$ is the translation $(x,y)\mapsto (x+v,y)$ in the chart $\xi_i$. In other words, $\tau_v$ is defined by
$$\xi_i\circ\tau_{v \mid T_i^{in}}\circ\xi_i^{-1}(x,y)=(x+v,y).$$ 
Now, for $m=0, \dots, 2n$, we define the gluing map ${\widebar\varphi}^m:\partial^{out}U \to \partial^{in} U$ defined by 
\begin{equation}
\label{e.definition-psi}
{\widebar\varphi}^m_{\mid T_i^{out}}  =  \left\{\begin{array}{ll}
\tau_{-1/2}\circ\sigma & \mbox{if } i\leq 2m \\
\tau_{+1/2}\circ\sigma & \mbox{if } i>2m 
\end{array}\right. .
\end{equation}
We denote by $\widebar Y_t^m$ the flow induced by $X_t$ on he quotient manifold $\widebar W^m:=U/\widebar\varphi^m$.

This definition entails that ${\widebar\varphi}^m$ is isotopic to $\sigma:\partial^{out} U\to\partial^{in} U$. In particular, it maps $T_i^{out}$ to $T_i^{in}$ for every $i$. Now, recall that $\sigma$ maps $\cF^s$ to $\cF^u$ and vice-versa, see Proposition~\ref{p.construction-foliations}. It follows that $\sigma_*f_i^{s,out}=f_i^{u,in}$ and $\sigma_*f_i^{u,out}=f_i^{s,in}$. Also note that $\tau_{\pm 1/2}$ maps $f^{s,in}_i$ to $f^{u,in}_i$ and vice versa (since the translation $(x,y)\mapsto (x\pm 1/2,y)$ maps $\ff^s_i$ to $\ff^u_i$ and vice-versa). We deduce that
\begin{equation}
\label{e.action-on-foliations}
{\widebar\varphi}^m_*f^{s,out}_i=f^{s,in}_i \quad\mbox{and}\quad {\widebar\varphi}^m_*f^{u,out}_i=f^{u,in}_i
\end{equation}
for every $i$. 
An important consequence of~\eqref{e.action-on-foliations} is that $\cL^s_i$ and ${\widebar\varphi}^m_*\cL^u_i$ are sub-laminations of two transverse foliations (namely the foliations $f^{s,in}_i$ and ${\widebar\varphi}^m_*f^{u,out}_i=f^{u,in}_i$). It follows that  $\cL^s_i$ and ${\widebar\varphi}^m_*\cL^u_i$ are strongly transverse. In other words, ${\widebar\varphi}^m$ is a strongly transverse gluing map. Another important consequence of~\eqref{e.action-on-foliations} is that the foliations $\cF^s$ and $\cF^u$ project to a pair of foliations $(\widebar\cF^{s,m}, \widebar\cF^{u,m})$ on the closed manifold $\widebar W^m:=U/{\widebar\varphi}^m$. Since $\cF^s$ and $\cF^u$ are $X_t$-invariant, it follows that $\widebar\cF^{s,m}$ and $\widebar\cF^{u,m}$ are $\widebar Y_t^m$-invariant.

We are left to prove items (c) and (d). For this purpose, we first recall that $\sigma$ maps $A_i^{j,s}$ to $A_i^{j,s}$ for every $i$ and $j$, see Theorem~\ref{t.fhplug}. Then, we observe that the definition of $\xi_i$ and $\tau_{-1/2}$ implies the annulus $\tau_{-1/2}(A_i^{j,s})$ intersects both $A_i^{j-1,s}$ and $A_i^{j,s}$. Item (c) follows. Item~(d) is proved similarly replacing $\tau_{-1/2}$ by $\tau_{+1/2}$. 
\end{proof}

\subsubsection{Construction of the gluing maps $\varphi^0,\dots,\varphi^{2n}$ and the Anosov flows $Y_t^0,\dots,Y_t^{2n}$}
\label{sss.construction-phi}

\begin{proof}[Proof of item~(1) of Theorem~\ref{t.con}]
Let $m\in\{1,\dots,2n\}$. The hyperbolic plug $(U,X)$ and the gluing map ${\widebar\varphi}^m$ provided by Proposition~\ref{p.construction-psi} satisfy the hypotheses of Theorem~\ref{t.BBY}: items~(7) and~(7') of Theorem~\ref{t.fhplug} entail that $\cL^s$ and $\cL^u$ are filling laminations, and Proposition~\ref{p.construction-psi} ensures that the gluing map ${\widebar\varphi}^m$ is strongly transverse. As a consequence, Theorem~\ref{t.BBY} guarantees that, up to replacing $X$ by an orbitally equivalent vector field, one can find a gluing map $\varphi^m$ that is strongly isotopic to ${\widebar\varphi}^m$ and so that the flow  $Y_t^m$ induced by $X_t$ on the closed manifold $W^m:=U/\varphi^m$ is an Anosov flow. 

The maximal invariant set of $(U,X)$ has two connected components $\Lambda^-$ and $\Lambda^+$. The properties of the gluing map ${\widebar\varphi}^m$ entail that ${\widebar\varphi}_*(\cL^u_i)$ intersects $\cL^s_i$ for each $i\in\{1,\dots,4n\}$. When $i$ is an odd integer, this implies that ${\widebar\varphi}_*(W^u(\Lambda^+)\cap\partial^{out} U)$ intersects $W^s(\Lambda^-)\cap\partial^{in} U$. When $i$ is an even integer, this implies that ${\widebar\varphi}_*(W^u(\Lambda^-)\cap\partial^{out} U)$ intersects $W^s(\Lambda^+)\cap\partial^{in} U$. Therefore Theorem~\ref{t.BBY} ensures that the flow $Y^m_t$ is transitive. This completes the proof of  item~(1).(a) of Theorem~\ref{t.con}.

By construction, $\varphi^m$ is strongly isotopic to ${\widebar\varphi}^m$ which is isotopic to $\sigma_{|\partial^{out} U}$. Hence $\varphi^m$ is isotopic to $\sigma_{|\partial^{out} U}$, as required by item (1).(b) of Theorem~\ref{t.con}.

Given any leaf $\ell^s$ of $\cL^s$ and any leaf $\ell^u$ of $\cL^u$, the strong isotopy between the gluing maps ${\widebar\varphi}^m$ and $\varphi^m$ ensures that $\varphi^m(\ell^u)$ intersects $\ell^s$ if and only if ${\widebar\varphi}^m(\ell^u)$ intersects $\ell^s$. As a consequence, items~(1).(c) and~(1).(d) of Theorem~\ref{t.con} follow from the items (c) and (d) of Proposition~\ref{p.construction-psi}. This completes the proof of item~(1) of Theorem~\ref{t.con}.
\end{proof}

\subsection{Construction of periodic orbits}
\label{ss.periodic-orbits}

Our next goal is to construct some periodic orbits $\alpha_1^m,\dots,\alpha_{2n}^m$ of the Anosov flow $Y^m_t$ satisfying  item (2) of Theorem~\ref{t.con}. We will also construct analogous periodic orbits $\widebar\alpha_1^m,\dots,\widebar\alpha_{2n}^m$ of the flow $\widebar Y_t^m$, which will be useful during the proof of items (3) and (4) of Theorem~\ref{t.con}. We begin by selecting stable and unstable strips. 

\subsubsection{Selection of stable and unstable strips}
\label{sss.choice-strips}
A \emph{stable strip} of the plug $(U,X)$ is a connected component of $\partial^{in} U\setminus\cL^s=\partial^{in} U\setminus W^s(\Lambda)$. Due to the properties of the lamination $\cL^s$ (see item (7) of Theorem~\ref{t.fhplug}), any stable strip is the interstitial region between two non-compact leaves of $\cL^s$ contained in the same Reeb lamination annulus. Similarly, an \emph{unstable strip} of the plug $(U,X)$  is a connected component of $\partial^{out} U\setminus\cL^u=\partial^{out} U\setminus W^u(\Lambda)$, and every such unstable strip  is the interstitial region between two non-compact leaves of the lamination $\cL^u$ contained in the same Reeb lamination annulus. 

The involution $\sigma:U\righttoleftarrow$ maps every stable strip to an unstable strip, and vice-versa; this is a straightforward consequence of the equalities $\sigma(\partial^{in} U)=\partial^{out} U$ and $\sigma(\cL^s)=\cL^u$.

Since $\Lambda$ is the maximal invariant set of $(U,X)$, the forward $X_t$-orbit of any point $x\in\partial^{in}U\setminus\cL^s=\partial^{in} U\setminus W^s(\Lambda)$ eventually exits $U$ at some point $\Theta(x)\in \partial^{out}U\setminus \cL^u= \partial^{out}U\setminus W^u(\Lambda)$. This defines a map
$$\Theta:\partial^{in} U\setminus\cL^s\longrightarrow\partial^{out}U\setminus \cL^u$$
called the \emph{crossing map of the plug $(U,X)$}.  This crossing map is invertible since the backward orbit of any point in $\partial^{out}U\setminus \cL^u=\partial^{out}U\setminus W^u(\Lambda)$ eventually exits $U$. Thanks to the transversality of the vector field $X$ to the surface $\partial U$, the maps $\Theta$ and $\Theta^{-1}$ are continuous. It follows that $\Theta$ maps every stable strip to an unstable strip.

The following proposition is a key step in the construction of the periodic orbit $\alpha_j^m$.

\begin{proposition}
\label{p.selection-strips}
For $j\in\{1,\dots,2n\}$, there exist two stable strips $D_{2j-1}^s, D_{2j}^s$ such that
$$D_{2j-1}^s\subset A_{2j-1}^{1,s},\quad\Theta(D_{2j-1}^s)\subset A_{2j}^{1,u},\quad D_{2j}^s\subset A_{2j}^{1,s}, \quad\ \Theta(D_{2j}^s)\subset A_{2j-1}^{1,u},$$ 
$$D_{2j}^s=\sigma\circ\Theta(D_{2j-1}^s),\quad D_{2j-1}^s=\sigma\circ\Theta(D_{2j}^s).$$
\end{proposition}

\begin{figure}[htp]
\begin{center}
  \includegraphics[totalheight=7cm]{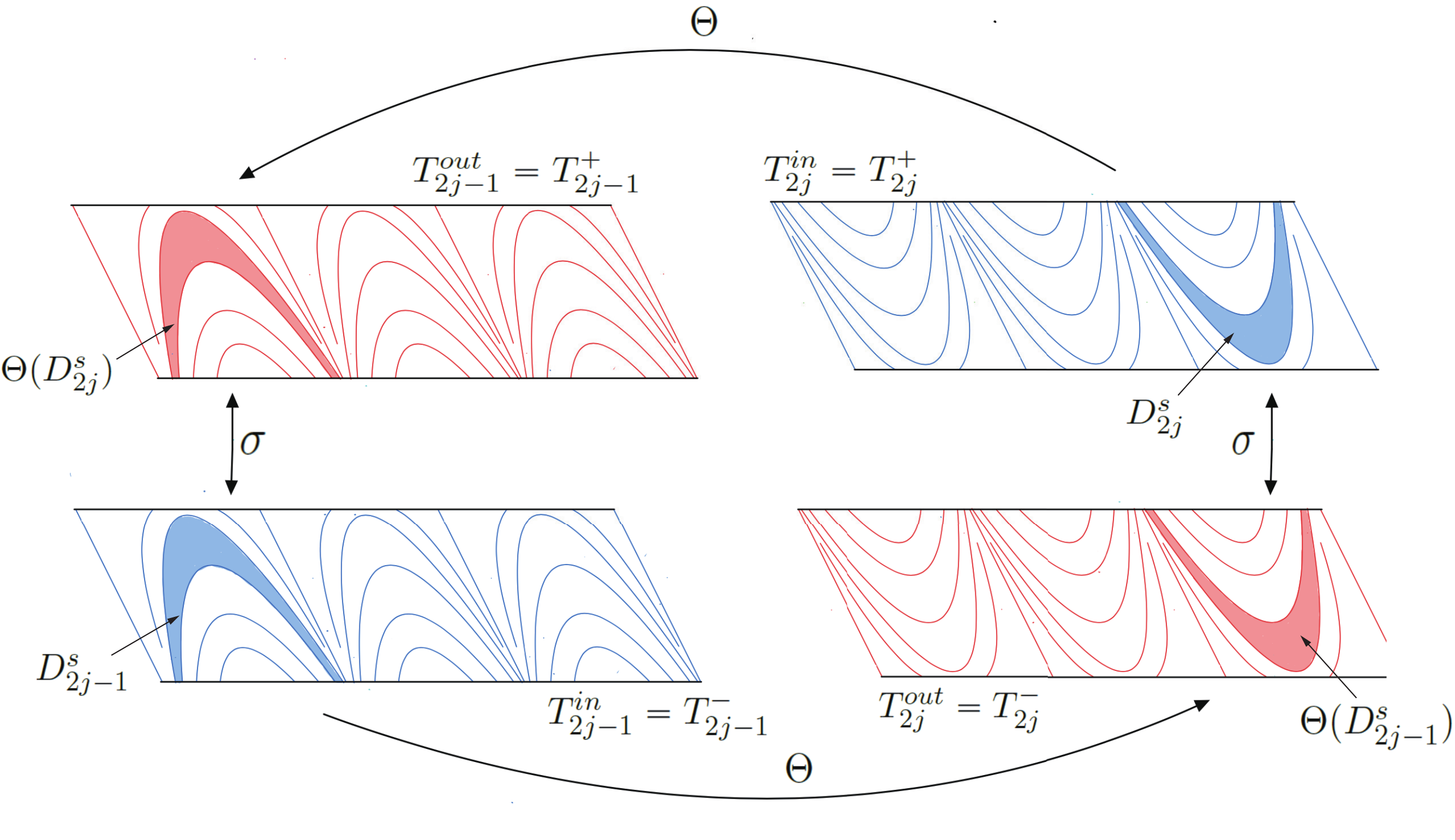}
  \caption{The stable strips $D_{2j-1}^s,D_{2j}^s$ and the unstable strips $\Theta(D_{2j-1}^s),\Theta(D_{2j}^s)$.}\label{f.Selstrips}
\end{center}
\end{figure}

The core of the proof is the following lemma:

\begin{lemma}
\label{l.selection-strip}
There exists a stable strip $D_{2j}^s\subset A_{2j}^{1,s}$ such that  $\Theta(D_{2j}^s)\subset A_{2j-1}^{1,u}$.
\end{lemma}

\begin{remark}
The choice of the annuli $A_{2j}^{1,s}$ and $A_{2j-1}^{1,u}$ is arbitrary: one can find a stable strip $D^s\subset A_{i}^{k,s}$ such that $\Theta(D^s)\subset A_{i'}^{k',u}$ for any $i,i',k,k'$ provided that $i,i'$ have opposite parity.
\end{remark}

\begin{proof}[Proof of Lemma~\ref{l.selection-strip}]
Note that, due to the parity of $2j$ and $2j-1$,  the Reeb annuli $A_{2j}^{1,s}$ and $A_{2j-1}^{1,u}$ sit in the boundary of $U^+$. The proof of the lemma will take place in $U^+$.

The compact leaf $c_{2j}^{1,s}$ is one of the two boundary components of the Reeb annulus $A_{2j}^{1,s}$. We know that  $c_{2j}^{1,s}=\partial^{in} U\cap W^s_0(\gamma_{2j}^{1,+})$ where $W^s_0(\gamma_{2j}^{1,+})$ is one of the two stable separatrices of the periodic u-boundary orbit $\gamma_{2j}^{1,+}$. We denote by $W^u_0(\gamma_i^{1,+})$ the unstable separatrix of the orbit $\gamma_{2j}^{1,+}$ which is on the same side of $W^{s}(\gamma_{2j}^{1,+})$ as the annulus $A_{2j}^{1,s}$. Similarly, the compact leaf $c_{2j-1}^{1,u}$ is a boundary component of the Reeb annulus $A_{2j-1}^{1,u}$. We know that  $c_{2j-1}^{1,u}=\partial^{out} U\cap W^u_0(\gamma_{2j-1}^{1,+})$ where $W^u_0(\gamma_{2j-1}^{1,+})$  is one of the two unstable separatrices of the periodic s-boundary orbit $\gamma_{2j-1}^{1,+}$. We denote by $W^s_0(\gamma_{2j-1}^{1,+})$ the unstable separatrix of $\gamma_{2j-1}^{1,+}$ which is on the same side of $W^u(\gamma_{2j-1}^{1,+})$ as the annulus $A_{2j-1}^{1,u}$. 

We claim that the stable lamination $W^s (\Lambda^+)$ intersects the unstable separatrix $W^u_0(\gamma_{2j}^{1,+})$. Indeed, the Reeb annulus $A_{2j}^{1,s}$ contains some non-compact leaves of the lamination $\cL^s_{2j}=W^s (\Lambda^+) \cap T_{2j}^+$ accumulating the compact $c_{2j}^{1,s}$. Due to the lamination structure of $W^s(\Lambda^+)$, to the transversality of $W^s (\Lambda^+)$ with $W^u (\Lambda^+)$ and to the fact that $W^u_0(\gamma_{2j}^{1,+})$ is on same side of $W^s(\gamma_{2j}^{1,+})$ as $A_{2j}^{1,s}$, we deduce that $W^s (\Lambda^+)$ intersects $W^u_0(\gamma_{2j}^{1,+})$. The claim is proved. 

Since $\Lambda^+$ is a maximal invariant set, it satisfies $\Lambda^+=W^s(\Lambda^+)\cap W^u(\Lambda^+)$. Together with the claim of the previous paragraph, this implies that  the unstable separatrix $W^u_0(\gamma_{2j}^{1,+})$ intersects $\Lambda$. Since $\Lambda^+$ is transitive (item~(6) of Theorem~\ref{t.fhplug}), it follows that $W^u_0(\gamma_{2j}^{1,+})$ is dense in $W^u (\Lambda^+)$. Similar arguments imply that $W^s_0(\gamma_{i-1}^{1,+})$ is dense in $W^s (\Lambda^+)$. The separtrices $W^u_0(\gamma_{2j}^{1,+})$ and $W^s_0(\gamma_{2j-1}^{1,+})$ are dense respectively in $W^u (\Lambda^+)$ and $W^s (\Lambda^+)$, the intersection $W^u_0(\gamma_{2j}^{1,+})\cap W^s_0(\gamma_{2j-1}^{1,+})$ is non-empty.

Let $x$ be a point in $W^u_0(\gamma_{2j}^{1,+})\cap W^s_0(\gamma_{2j-1}^{1,+})$. Since $W^u_0(\gamma_{2j}^{1,+})$ is a separatrix of a u-boundary periodic orbit, and $W^s_0(\gamma_{2j-1}^{1,+})$ is a separatrix of a s-boundary periodic orbit, there exists a connected component $D$ of $U\setminus (W^{u} (\Lambda^+)\cup W^{s} (\Lambda^+))$ such that $x$ is in the accessible boundary of $D$. See Figure \ref{f.selstrip} as an illustration. Then $D^s:=D\cap\partial^{in} U$ is a connected component of $\partial^{in} U^+\setminus (W^{u} (\Lambda^+)\cup W^{s} (\Lambda^+))=\partial^{in} U^+\setminus W^{s} (\Lambda^+)$. In other words, $D^s$ is a stable strip of $(U^+,X)$. Similarly, $D^u:=D\cap\partial^{out} U$ is an unstable strip. The $X_t$-invariance of $D$ implies that $D^u=\Theta(D^s)$. 

\begin{figure}[htp]
\begin{center}
  \includegraphics[totalheight=7cm]{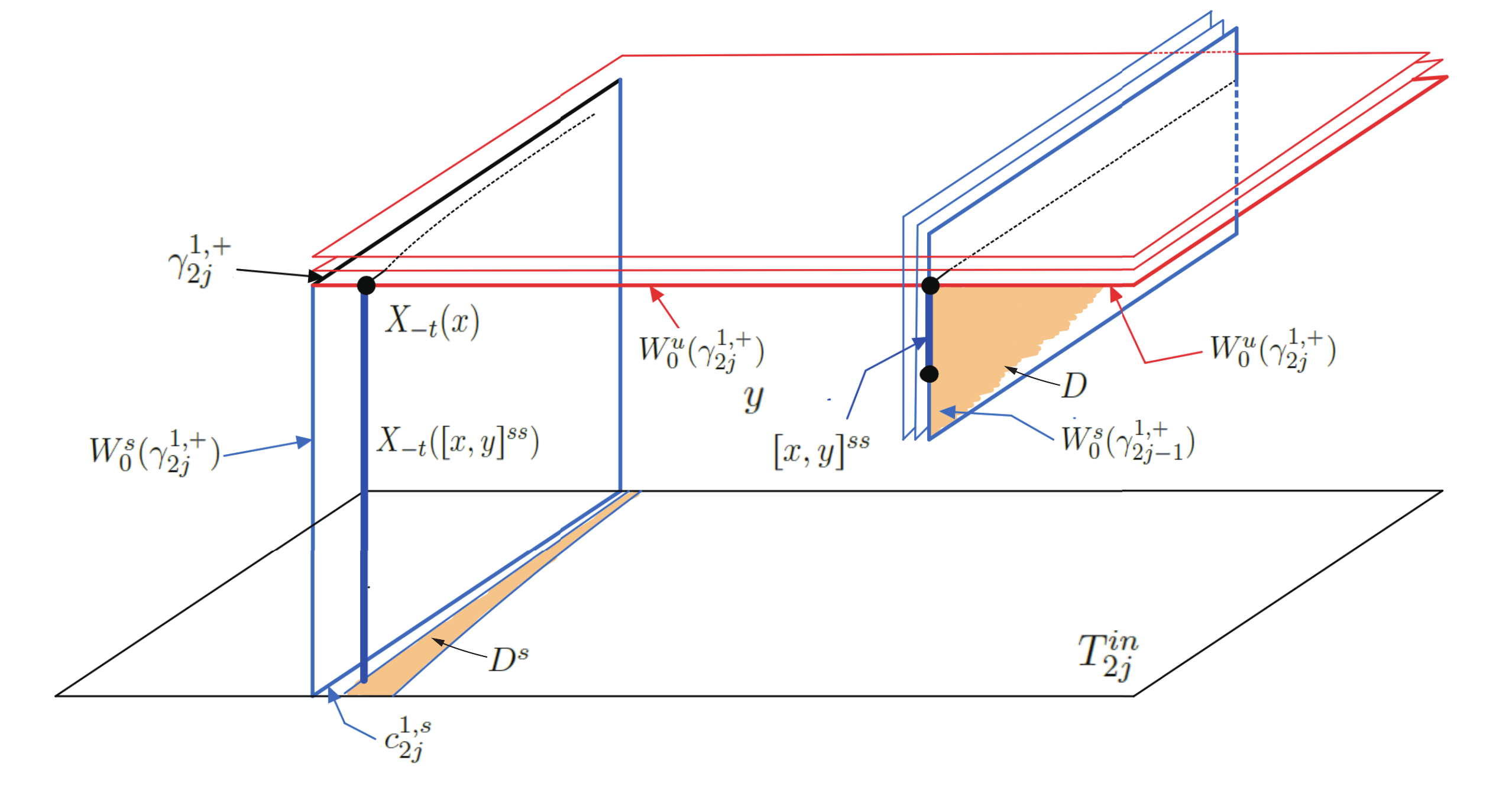}
  \caption{Proof of Lemma~\ref{l.selection-strip}.}\label{f.selstrip}
\end{center}
\end{figure}

Now we prove that the stable strip $D^s$ is contained in the annulus $A_{2j}^{1,s}$. For that purpose, we consider a strong stable arc $[x,y]^{ss}$ included in $W^s_0(\gamma_{2j-1}^{1,+})$ such that  $]x,y]^{ss}\cap W^u(\Lambda^+) =\emptyset$ (see Figure \ref{f.selstrip}). Observe that $[x,y]^{ss}$ is in the accessible boundary of $D$. The $\lambda$-Lemma implies that $\bigcup_{t=T}^{+\infty} X_{-t}(]x,y]^{ss})$ converges for the $C^1$ topology towards $W^s_0(\gamma_{2j}^{1,+})$ as $T\to +\infty$. Moreover, since $]x,y]^{ss} \cap W^u (\Lambda^+) =\emptyset$, the backward orbit of any point in $]x,y]^{ss}$ exits $U$ by crossing $\partial^{in} U^+$. It follows that $(\bigcup_{t=T}^{+\infty} X_{-t}(]x,y]^{ss}))\cap \partial^{in} U^+$ is a half leaf of the lamination $\cL^s$ accumulating $W^s_0(\gamma_{2j}^{1,+}) \cap \partial^{in} U =c_{2j}^{1,s}$. Moreover, the accumulation is on the side of $c_{2j}^{1,s}$ where the annulus $A_i^{1,s}$ sits (due to the choice of $W^u_0(\gamma_{2j}^{1,+})$).  Finally, since $]x,y]^{ss}$ is in the accessible boundary of $D$, and since $D$ is $(X_t)$-invariant, $(\bigcup_{t=T}^{+\infty} X_{-t}(]x,y]^{ss}))\cap \partial^{in} U^+$  must be in the accessible boundary of $D\cap \partial^{in} U^+ =D^s$. We conclude that $D^s$ accumulates $c_{2j}^{1,s}$ on the side of the annulus $A_{2j}^{1,s}$. Hence, $D^s\subset A_{2j}^{1,s}$.

By similar arguments, $D^u=\Theta(D^s)$ accumulates $c_{2j-1}^{1,u}$ on the side of the annulus $A_{2j-1}^{1,u}$. Hence, $\Theta(D^s)\subset A_{2j-1}^{1,u}$. The lemma is proved.
\end{proof}

\begin{proof}[Proof of Proposition~\ref{p.selection-strips}]
Lemma~\ref{l.selection-strip} provides us  with a stable strip $D_{2j}^s$ such that 
$$D_{2j}^s\subset A_{i}^{1,s}\quad\mbox{ and }\quad\Theta(D_{2j}^s)\subset A_{2j-1}^{1,u}.$$
We define a stable strip $D_{2j-1}^s$ by setting
$$D_{2j-1}^s:=\sigma\circ\Theta(D_{2j}^s).$$
Observe that $D_{2j-1}^s$ is indeed a stable strip since $\Theta$ maps every stable strip to an unstable strip and $\sigma$ maps every unstable strip to a stable strip. The indexing of the Reeb annuli of the laminations $\cL^s$ and $\cL^u$ was chosen so that $\sigma(A_{i}^{k,u})=A_{i}^{k,s}$ for every $i$ and $k$. It follows that 
$$D_{2j-1}^s\subset\sigma(A_{2j-1}^{1,u})=A_{2j-1}^{1,s}.$$
Now recall that the involution $\sigma$ satisfies $\sigma_*X=-X$ (item~(3) of Theorem~\ref{t.fhplug}). Hence $\sigma$ conjugates the crossing map $\Theta$ to its inverse, which is equivalent to $\sigma\circ\Theta\circ\sigma\circ\Theta=\mathrm{Id}$. It follows that
$$D_{2j}^s:=\sigma\circ\Theta(D_{2j-1}^s).$$
As a further consequence, 
$$\Theta(D_{2j-1}^s)=\sigma(D_{2j}^s)\subset\sigma(A_{2j}^{1,s})=A_{2j}^{1,u}.$$
Therefore the strips $D_{2j-1}^s$ and $D_{2j}^s$ satisfy all the required properties.
\end{proof}

\subsubsection{Selection of connected components in the intersection of two strips}
 
Fix an integer $j\in\{1,\dots,2n\}$ and consider the strips $D_{2j-1}^s,D_{2j}^s$ provided by Proposition~\ref{p.selection-strips}. Denote 
$$D_{2j-1}^u:=\Theta(D_{2j}^s)\mbox{ and }D_{2j}^u:=\Theta(D_{2j-1}^s).$$ 
Recall that $D_{2j-1}^s, D_{2j}^s$ are stable strips in the  annuli $A_{2j-1}^{1,s}, A_{2j}^{1,s}$ respectively, and $D_{2j-1}^u, D_{2j}^u$ are unstable strips in the annuli $A_{2j-1}^{1,u}, A_{2j}^{1,u}$ respectively.

As explained previously (see the end of the proof of Proposition~\ref{p.construction-psi}), the annulus $\tau_{1/2}\circ\sigma(A_{2j-1}^{1,u})$ intersects the annulus $A_{2j-1}^{1,s}$. Recall that $\tau_{1/2}\circ\sigma(A_{2j-1}^{1,u})$ is a Reeb annulus of the foliation $f^{u,in}_{2j-1}$ and $A_{2j-1}^{1,s}$ is a Reeb annulus of the foliation $f^{s,in}_{2j-1}$. Due to the geometry of the pair of foliations $(f^{s,in}_{2j-1},f^{u,in}_{2j-1})$ (recall that it is homeomorphic to the model $(\ff^s_{2j-1},\ff^u_{2j-1})$), it follows that the image under $\tau_{1/2}\circ\sigma$ of any unstable strip in $A_{2j-1}^{1,u}$ intersects any stable strip in $A_{2j-1}^{1,s}$. In particular, $\tau_{1/2}\circ\sigma(D_{2j-1}^u)$ intersects $D_{2j-1}^s$. Similar arguments imply that $\tau_{1/2}\circ\sigma(D_{2j}^u)$ intersects $D_{2j}^s$. Due to the spiraling of the strips along the compact leaves, both the intersections have infinitely many connected components. We choose arbitrarily:
\begin{itemize}
\item[--] a connected component $\widebar R_{2j-1}$ of $(\tau_{1/2}\circ\sigma)(D_{2j-1}^u)\cap D_{2j-1}^s$,
\item[--] a connected component $\widebar R_{2j}$ of  $(\tau_{1/2}\circ\sigma)(D_{2j}^u)\cap D_{2j}^s$.
\end{itemize}
Now, we consider the map $\vartheta:\partial^{in} U\righttoleftarrow$ so that $\vartheta_{|T_i^{in}}$ is defined by 
$$\xi_i\circ \vartheta\circ \xi_i^{-1}(x,y)=(1-x,y).$$
This formula means that $\vartheta_{|T_i^{in}}$ is the axial symmetry with respect to the core of the annulus $A_i^{1,s}$. We claim that, for $i=2j-1,2j$,
\begin{equation}
\label{e.action-vartheta}
\vartheta\left( (\tau_{1/2}\circ\sigma)(D_{i}^u)\cap D_{i}^s\right)=(\tau_{-1/2}\circ\sigma)(D_{i}^u)\cap D_{i}^s.
\end{equation}
Let us prove this equality. For $i=2j-1$ or $2j$, the symmetry $\vartheta$ leaves the foliation $f^{s,in}_i$ globally invariant, hence must preserves every leaf of $f^{s,in}_i$ which intersects its axis. Hence $\vartheta$ must preserves every stable strip in the annulus $A_i^{1,s}$. In particular, $\vartheta(D_i^s)=D_i^s$. Now recall that the strips $D_i^s$ and $D_i^u$ were chosen so that $\sigma(D_i^u)=D_i^s$. It follows that $\vartheta(\sigma(D_i^u))=\vartheta(D_i^s)=D_i^s=\sigma(D_i^u)$. Also notice that the definitions $\vartheta$ and $\tau_t$ imply that $\vartheta\circ\tau_{1/2}\vartheta=\tau_{-1/2}$. We conclude that $\vartheta\left((\tau_{1/2}\circ\sigma)(D_{2j-1}^u)\right)=\tau_{-1/2}\circ\sigma(D_{2j-1}^u)$. Equality~\eqref{e.action-vartheta} are proved. See Figure
\ref{f.1}. Now, we define
 $$\widebar L_{2j-1}:=\vartheta(\widebar R_{2j-1})\quad\mbox{and}\quad \widebar L_{2j}:=\vartheta(\widebar R_{2j}).$$
Equality~\eqref{e.action-vartheta} implies that 
\begin{itemize}
\item[--] $\widebar L_{2j-1}$ is a connected component of $(\tau_{-1/2}\circ\sigma)(D_{2j-1}^u)\cap D_{2j-1}^s$,
\item[--] $\widebar L_{2j}$ is a connected component of $(\tau_{-1/2}\circ\sigma)(D_{2j}^u)\cap D_{2j}^s$.
\end{itemize}

\begin{figure}[htp]
\begin{center}
  \includegraphics[totalheight=5.8cm]{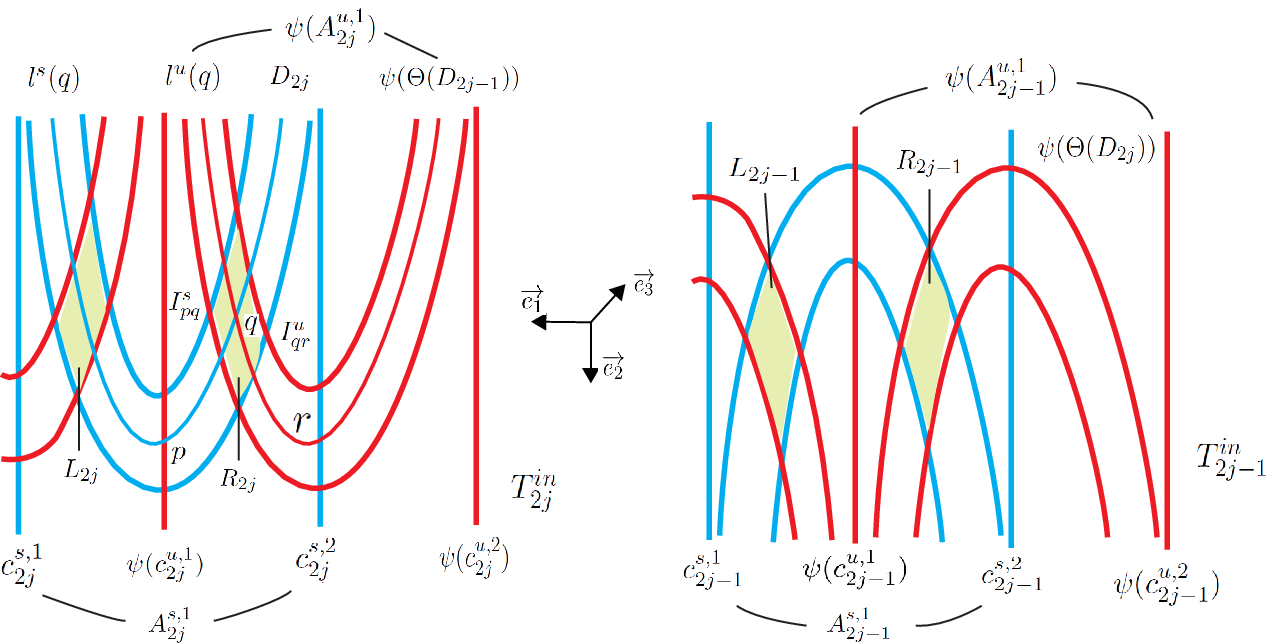}\\
  \caption{$\widebar R_{2j}$, $\widebar R_{2j-1}$, $\widebar L_{2j}$, $\widebar L_{2j-1}$}\label{f.1}
\end{center}
\end{figure}

\begin{remark}
The reason why we demand $\widebar L_{2j-1}$ and $\widebar L_{2j}$ to be the images of $\widebar R_{2j-1}$ and $\widebar R_{2j}$ under $\vartheta$ will appear at the very end of the proof of item~(4) of Theorem~\ref{t.con}. For the sole purpose of proving item~(2) of Theorem~\ref{t.con}, it would have been enough to define $\widebar L_{i}$ to be arbitrary  connected components of $\tau_{-1/2}\circ\sigma(D_{i}^u)\cap D_{i}^s$.
\end{remark}

\subsubsection{Construction of the periodic orbit $\widebar\alpha_j^1,\dots,\widebar\alpha_j^{2n}$ of the flow $\widebar Y_t^m$}
\label{sss.construction-beta}

Let $\widebar\pi^m$ be the natural projection of $U$ onto $\widebar W^m=U/{\widebar\varphi}^m$. For $i=1,\dots,4n$, let $\widebar T_{i}^m:=\widebar\pi^m (T_i^{out})=\widebar\pi^m (T_i^{in})$. The following statement is an analog of item~(2) of Theorem~\ref{t.con} for the flow $\widebar Y_t^m$.

\begin{proposition}
\label{p.construction-beta}
For $m=0, \dots, 2n$ and $j=1,\dots,2n$, there exists a periodic orbit $\widebar\alpha_j^m$ of the flow $\widebar Y_t^m$ such that:
\begin{enumerate}
\item $\widebar\alpha_j^m$ intersects the torus $\widebar T_{2j-1}^m$ at one point $\widebar q_{2j-1}^m\in \mathrm{int}\left(\widebar\pi^m (A_{2j-1}^{1,s}) \cap \widebar\pi^m (A_{2j-1}^{1,u})\right)$,
\item $\widebar\alpha_j^m$ intersects the torus $\widebar T_{2j}^m$ at  one point $\widebar q_{2j}^m\in \mathrm{int}\left(\widebar\pi^m (A_{2j}^{1,s}) \cap \widebar\pi^m (A_{2j}^{1,u})\right)$,
\item  $\widebar\alpha_j^m$ does not intersect the torus $\widebar T_{i}^m$ for $i\neq 2j-1,2j$.
\end{enumerate}
\end{proposition}

\begin{proof}
Fix an integer $m\in\{0,\dots,2n\}$ and an integer $j\in\{1,\dots,2n\}$. Let $D_{2j-1}^s,D_{2j}^s$ be the strips provided by Proposition~\ref{p.selection-strips} and recall that  $D_{2j-1}^u=\Theta(D_{2j}^s)\mbox{ and }D_{2j}^u=\Theta(D_{2j-1}^s)$.

Let us first consider the case $j>m$. The definitions of the gluing map ${\widebar\varphi}^m$ and of the unstable strips $D^u_{2j-1},D^u_{2j}$ entails that
$$({\widebar\varphi}^m\circ\Theta)(D_{2j}^s)= (\tau_{+1/2}\circ\sigma)(D_{2j-1}^u)\quad\mbox{and}\quad ({\widebar\varphi}^m\circ\Theta)(D_{2j-1}^s)= (\tau_{+1/2}\circ\sigma)(D_{2j}^u).$$
As a consequence, 
\begin{itemize}
\item[--] $\widebar R_{2j-1}$ is a connected component of $({\widebar\varphi}^m\circ\Theta)(D_{2j}^s)\cap D_{2j-1}^s$,
\item[--] $\widebar R_{2j}$ is a connected component of $({\widebar\varphi}^m\circ\Theta)(D_{2j-1}^s)\cap D_{2j}^s$.
\end{itemize}
Now observe that $D_{2j-1}^s$ is a strip bounded by two leaves of the foliation $f^{s,in}_{2j-1}$ and $({\widebar\varphi}^m\circ\Theta)(D_{2j}^s)=(\tau_{+1/2}\circ\sigma)(D_{2j-1}^u)$ is a strip bounded by two leaves of foliation $f^{u,in}_{2j-1}$. This implies that every connected component of the intersection $({\widebar\varphi}^m\circ\Theta)(D_{2j}^s)\cap D_{2j-1}^s$ is markovian. Similarly,  every connected component of the intersection $({\widebar\varphi}^m\circ\Theta)(D_{2j-1}^s)\cap D_{2j}^s$ is topologically markovian. It follows that there exists a point $\widehat q_{2j-1}\in \widebar R_{2j-1}$ such that $\widehat q_{2j}:=({\widebar\varphi}^m\circ\Theta)(\widehat q_{2j-1})\in \widebar R_{2j}$ and $({\widebar\varphi}^m\circ\Theta)(\widehat q_{2j})=\widehat q_{2j-1}$.  We define $\widebar\alpha_j^m$ to the orbit of the point $\widebar q_{2j-1}^m:=\widebar\pi^m(\widehat q_{2j-1})$. By construction, $\widebar\alpha_j^m$ is periodic, intersects $\widebar T_{2j-1}^m$ at $\widebar q_{2j-1}^m\in\widebar\pi^m(\widebar R_{2j-1})$,   intersects $\widebar T_{2j-1}^m$ at $\widebar q_{2j}^m:=\widebar\pi^m(\widehat q_{2j})\in\widebar\pi^m(\widebar R_{2j})$, and does not not intersect the torus $\widebar T_{i}^m$ for $i\neq 2j-1,2j$.

For $j\leq m$, we perform exactly the same construction. The only difference is that we must replace $\tau_{+1/2}$ by $\tau_{-1/2}$ and therefore replace the $\widebar R_{2j-1}$ and $\widebar R_{2j}$ by $\widebar L_{2j-1}$ and $\widebar L_{2j}$. At the end of the day, we obtain that the orbit $\widebar\alpha_j^m$ is periodic, intersects $\widebar T_{2j-1}^m$ at one point $\widebar q_{2j-1}^m\in\widebar\pi^m(L_{2j-1})$,  intersects $\widebar T_{2j}^m$ at one point $\widebar q_{2j}^m\in\widebar\pi^m(L_{2j})$, and does not intersect the torus $\widebar T_{i}^m$ for $i\neq 2j-1,2j$.
\end{proof}

\begin{remark}
The map ${\widebar\varphi}^m\circ\Theta\circ{\widebar\varphi}^m\circ\Theta$ preserves the foliations $f_{2j-1}^{s,in}$ and $f_{2j-1}^{u,in}$, but does not necessarily contracts/expands the directions tanngent to the leaves of these foliations. due to this lack of contraction/expansion, the point $\widebar q_{2j-1}$ which appears in the previous proof is not necessarily unique.  As a consequence, the orbit $\widebar\alpha_j^m$ is not necessarily unique.
\end{remark}

\begin{remark}
\label{r.beta-indep-m}
Let $m,m'\in\{1,\dots,2n\}$ and $j\in\{1,\dots,2n\}$. Assume that $j>\max\{m,m'\}$ or $j\leq\min\{m,m'\}$.  Then the gluing diffeomorphisms ${\widebar\varphi}^0$ and ${\widebar\varphi}^m$ coincide on $T_{2j-1}^{out}$ and $T_{2j}^{out}$. In view of the construction of the orbits $\widebar\alpha_j^m$ and $\widebar\alpha_j^{m'}$, it follows that $\widebar\alpha_j^m$ and $\widebar\alpha_j^{m'}$ are the projections of the same orbits arcs of $X_t$ in $U$, \emph{i.e.} 
$$(\widebar\pi^m)^{-1}(\widebar\alpha_j^m)=(\widebar\pi^m)^{-1}(\widebar\alpha_j^{m'}).$$ 
\end{remark}

\subsubsection{Construction of the periodic orbits $\alpha_j^1,\dots,\alpha_j^{2n}$ of the flow $Y_t^m$}
\label{sss.construction-alpha}

\begin{proof}[Proof of item~(2) of Theorem~\ref{t.con}]
Fix an integer $m\in\{0,\dots,2n\}$. By definition, the gluing map $\varphi^m$ is strongly isotopic to the gluing map ${\widebar\varphi}^m$. This means that there is a continuous family $(\chi_s^m)_{s\in [0,1]}$ of strongly transverse gluing maps such that $\chi_0^m={\widebar\varphi}^m$, $\chi_1^m=\varphi^m$. 

Let $j\in\{1,\dots,2n\}$. First consider the case where $j>m$. Thanks to strongly transversality, we may define, for every $s\in [0,1]$,\begin{itemize}
\item[--] a connected component $R_{2j-1,s}^m$ of $(\chi_s^m\circ\Theta)(D_{2j}^s)\cap D_{2j-1}^s$,
\item[--] a connected component $R_{2j,s}^m$ of $(\chi_s^m\circ\Theta)(D_{2j-1}^s)\cap D_{2j}^s$,
\end{itemize}
so that $R_{2j-1,s}^m,R_{2j,s}^m$ depend continuously on $s$ and coincide with $\widebar R_{2j-1},\widebar R_{2j}$ for $s=0$. We set
$$R_{2j-1}^m:=R_{2j-1,1}^m\quad\mbox{and}\quad R_{2j}^m:=R_{2j,1}^m.$$
Strong isotopy also implies that every connected component of the intersections $(\chi_s^m\circ\Theta)(D_{2j}^s)\cap D_{2j-1}^s$ and $(\chi_s^m\circ\Theta)(D_{2j-1}^s)\cap D_{2j}^s$ remains markovian as $s$ varies. So we may construct the orbit $\alpha_j^m$ exactly as we did for the orbit $\widebar\alpha_j^m$, replacing $\widebar R_{2j-1}$ and $\widebar R_{2j}$ by $R_{2j-1}^m$ and $R_{2j}^m$. The case where $j\leq m$ can be treated similarly using $\widebar L_{2j-1}$ and $\widebar L_{2j}$ instead of $\widebar R_{2j-1}$ and $\widebar R_{2j}$.
\end{proof}

\begin{remark}
Since $Y^m_t$ is an Anosov flow, the map $\varphi^m\circ\Theta$ enjoys hyperbolicity properties. It follows that the orbit $\alpha_j^m$ is unique, provided that we require its intersections with $T_{2j-1}^m$ and $T_{2j}^m$ to be in $R_{2j-1}^m$ and  $R_{2j}^m$ if $j>m$ (respectively in $L_{2j-1}^m$ and  $L_{2j}^m$ if $j\leq m$). We will not use this uniqueness property.
\end{remark}

\subsection{Topology of the local stable manifold $W^s_{loc}(\alpha_j^0)$}
\label{ss.local-stable-annulus}

Our next goal is to prove item~(3) of Theorem~\ref{t.con}, that is to prove that the local stable manifold $W^s_{loc}(\alpha_j^0)$ of the periodic orbit $\alpha_j^0$ constructed above is an annulus (not a Möbius band) for every $j$. The proof will be divided into two steps. In the first step, we prove that the ``local stable manifold" $W^s_{loc}(\widebar\alpha_j^0)$ of the periodic orbit $\widebar\alpha_j^0$ of the (non-Anosov) flow $\widebar Y_t^0$ is an annulus. In the second step, we prove that $W^s_{loc}(\widebar\alpha_j^0)$ is homeomorphic to $W^s_{loc}(\alpha_j^0)$. 

\begin{remark}
The arguments we give below can be adapted to prove that the local stable manifold $W^s_{loc}(\alpha_j^m)$ of the periodic orbit $\alpha_j^m$ of the Anosov flow $Y_t^m$ is an annulus for every $j$ and every $m$. We stick to the case $m=0$ for sake of simplicity. The case $m\geq 1$ will follow from the case $m=0$ and item~(4) of Theorem~\ref{t.con} which entails that  $W^s_{loc}(\alpha_j^m)$ is homeomorphic to $W^s_{loc}(\alpha_j^0)$ for every $m$.
\end{remark}

\subsubsection{Topology of the ``local stable manifold" $W^s_{loc}(\widebar\alpha_j^0)$}
In Subsection~\ref{ss.gluing-maps}, we have defined a gluing map ${\widebar\varphi}^0$. The flow $\widebar Y_t^0$ induced by $X_t$ on the closed manifold $\widebar W^0=U/{\widebar\varphi}^0$ is not necessarily an Anosov flow, but it preserves a pair of transverse foliations $(\widebar\cF^{0,s},\widebar\cF^{0,u})$. In Subsection~\ref{ss.periodic-orbits}, we have constructed some periodic orbits $\widebar\alpha_1^0,\dots,\widebar\alpha_{2n}^0$ of the flow $\widebar Y_t^0$. For $j=1,\dots,2n$, we will denote by $W^s_{loc}(\widebar\alpha_j^0)$ a small tubular neighborhood of $\widebar\alpha_j^0$, which is in the leaf of $\widebar\cF^{0,s}$ containing this periodic orbit\footnote{This is an abuse of notation: $W^s_{loc}(\widebar\alpha_j^0)$ is not necessarily a local stable set of $\widebar\alpha_j^0$ since $\widebar Y_t^0$ is not necessarily an Anosov flow.}. Being a tubular neighborhood of a closed curve in a two-dimensional manifold, $W^s_{loc}(\widebar\alpha_j^0)$ is either an annulus or a Möbius band. We will prove the latter
case is impossible.

\begin{proposition}\label{p.annulus-psi}
For $j=1,\dots, 2n$, $W_{loc}^s(\widebar\alpha_j^0)$ is an annulus (not a Möbius band).
\end{proposition}

\begin{proof}
The very rough idea of the proof is that $W_{loc}^s(\widebar\alpha_j^0)$ can be decomposed in two halves, which are in some sense symmetric. 

More precisely, recall that $\widebar\alpha_j^0$ intersects $\widebar T_{2j-1}^0$ at $\widebar q_{2j-1}^0$, intersects  $\widebar T_{2j}^0$ at $\widebar q_{2j}^0$, and does not intersect $\widebar T_i^0$ for $i\neq 2j,2j-1$. And recall that $\widehat q_{2j-1}^0,\widehat q_{2j}^0$ are the lifts of $\widebar q_{2j-1}^0,\widebar q_{2j}^0$ in $T_{2j-1}^{in},T_{2j}^{in}$ respectively. Notice that $\widehat q_{2j-1}^0=({\widebar\varphi}^0\circ\Theta)(\widehat q_{2j}^0)$ and  $\widehat q_{2j}^0=({\widebar\varphi}^0\circ\Theta)(\widehat q_{2j-1}^0)$. Denote by $\ell^s_{2j-1}$ and $\ell^s_{2j}$ the leaves of the foliation $f_{2j-1}^{s,in}$ and $f_{2j}^{s,in}$ containing the points $\widehat q_{2j-1}^0$ and $\widehat q_{2j}^0$ respectively. Notice that  $I_{2j-1}^s:=({\widebar\varphi}^0\circ\Theta)(\ell^s_{2j})\subset\ell^s_{2j-1}$  is a neighborhood of $\widehat q_{2j-1}^0$ in $\ell^s_{2j-1}$ and $I_{2j}^s:={\widebar\varphi}^0\circ\Theta(\ell^s_{2j-1})\subset\ell^s_{2j}$ is a neighborhood of $\widehat q_{2j}^0$ in $\ell^s_{2j}$ (see Figure~\ref{f.Annulus-not-Mobius-1}).

\begin{figure}[htp]
\begin{center}
  \includegraphics[totalheight=9cm]{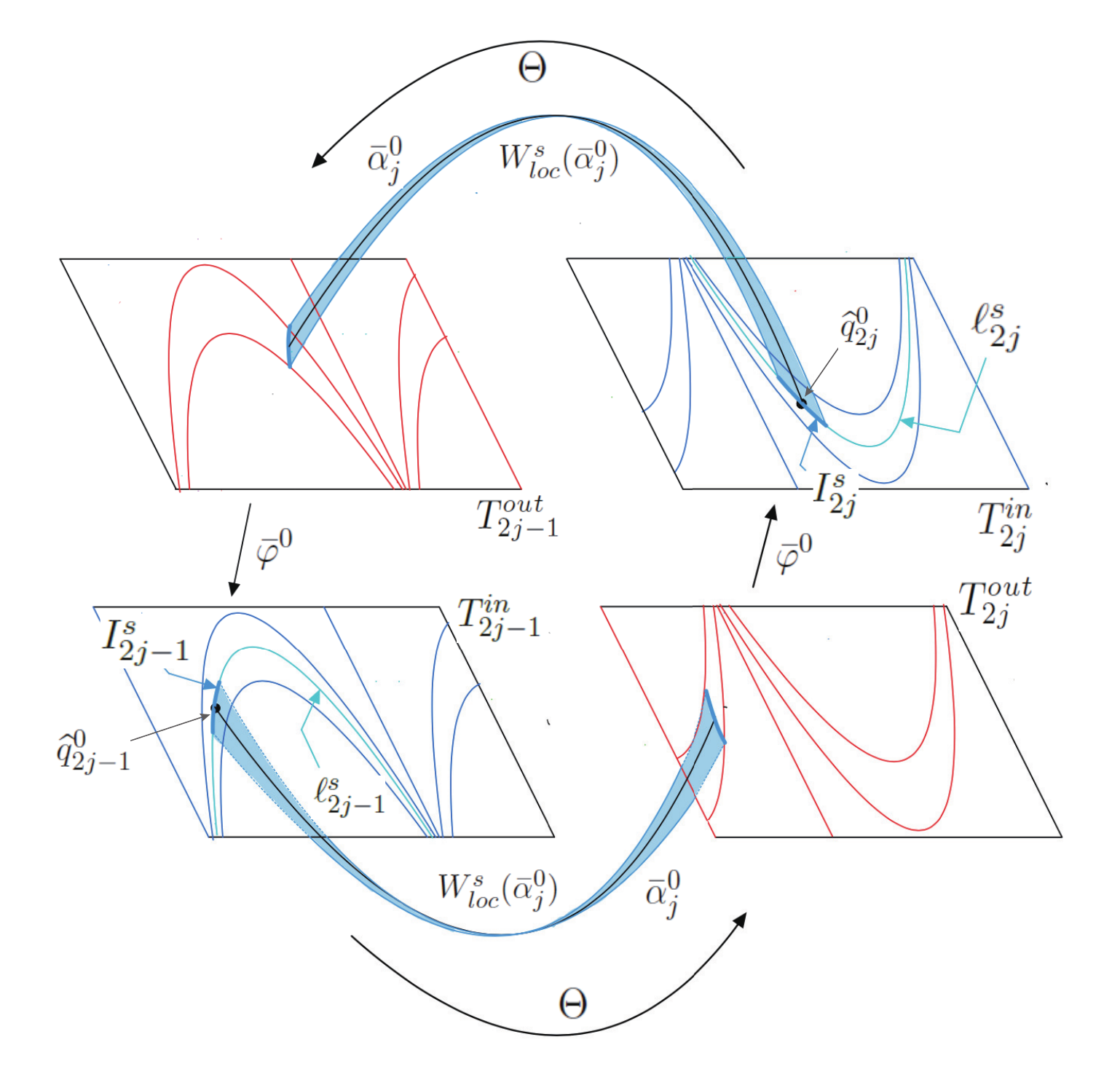}\\
  \caption{Decomposition of $W^s_{loc}(\widebar\alpha_j^0)$}\label{f.Annulus-not-Mobius-1}
\end{center}
\end{figure}

If we choose some orientations of the leaves $\ell^s_{2j-1}$ and $\ell^s_{2j}$, then Proposition~\ref{p.annulus-psi} is equivalent to the following claim: 

\begin{claim}
\label{c.annulus-psi}
If ${\widebar\varphi}^0\circ\Theta:\ell^s_{2j}\hookrightarrow \ell^s_{2j-1}$ preserves orientations then \hbox{${\widebar\varphi}^0\circ\Theta:\ell^s_{2j-1}\hookrightarrow \ell^s_{2j}$} also does, and vice versa. 
\end{claim}

Le us prove this claim. The leaf $\ell^s_{2j-1}$ is contained in the Reeb annulus $A_{2j-1}^{1,s}$. As a consequence, one end of $\ell^s_{2j-1}$ is spiraling around the compact leaf  $c_{2j-1}^{1,s}$ and the other end is spiraling around the compact leaf $c_{2j-1}^{2,s}$. We choose to endow $\ell^s_{2j-1}$ with the orientation pointing towards $c_{2j-1}^{2,s}$. Similarly, we choose to endow $\ell^s_{2j}$ with the orientation pointing towards $c_{2j}^{2,s}$. Hence, ${\widebar\varphi}^0\circ\Theta:\ell^s_{2j}\hookrightarrow \ell^s_{2j-1}$ preserves the orientations if and only if 
\begin{enumerate}
\item[(1)] ${\widebar\varphi}^0\circ\Theta$ maps $\ell^s_{2j}$ oriented towards $c_{2j}^{2,s}$ to $I^s_{2j-1}$ oriented towards $c_{2j-1}^{2,s}$.
\end{enumerate}
Composing by $({\widebar\varphi}^0)^{-1}$, we see that (1) is equivalent to
\begin{enumerate}
\item[(2)] $\Theta$ maps $\ell^s_{2j}$ oriented towards $c_{2j}^{2,s}$ to $({\widebar\varphi}^0)^{-1}(I^s_{2j-1})$ oriented towards $({\widebar\varphi}^0)^{-1}(c_{2j-1}^{2,s})$.
\end{enumerate}
Here comes the first key ingredient of the proof. Due to item~(2) of Theorem~\ref{t.fhplug}, the involution $\sigma:U\righttoleftarrow$ conjugates the crossing map $\Theta$ to its inverse $\Theta^{-1}$. As a consequence (using the equality  ${\widebar\varphi}^0=\tau_{1/2}\circ\sigma$), (1) is equivalent to
\begin{enumerate}
\item[(3)] $\Theta^{-1}$ maps $\sigma(\ell^s_{2j})$ oriented towards $\sigma(c_{2j}^{2,s})$ to $\tau_{-1/2}(I^s_{2j-1})$ oriented towards $\tau_{-1/2}(c_{2j-1}^{2,s})$.
\end{enumerate} 
A map preserves orientation if and only if its inverse does. Hence (2) is equivalent to
\begin{enumerate}
\item[(4)] $\Theta$ maps $\tau_{-1/2}(I^s_{2j-1})$ oriented towards $\tau_{-1/2}(c_{2j-1}^{2,s})$ to $\sigma(\ell^s_{2j})$ oriented towards $\sigma(c_{2j}^{2,s})$.
\end{enumerate} 
Composing by ${\widebar\varphi}^0$ and using again the equality ${\widebar\varphi}^0=\tau_{1/2}\circ\sigma$, (3) is equivalent to
\begin{enumerate}
\item[(5)] ${\widebar\varphi}^0\circ\Theta$ maps $\tau_{-1/2}(I^s_{2j-1})$ oriented towards $\tau_{-1/2}(c_{2j-1}^{2,s})$ to $\tau_{1/2}(\ell^s_{2j})$ oriented towards $\tau_{1/2}(c_{2j}^{2,s})$.
\end{enumerate} 
Now comes the second key point of the proof. Recall that the ``translation" $\tau_t$ was defined so that the $\tau_{1/2}(f_i^{s,in})=\tau_{-1/2}(f_i^{s,in})=f_i^{u,in}$.  Hence (4) concerns the action of the map ${\widebar\varphi}^0\circ\Theta$ on the orientation of certain unstable leaves. We will deduce the action of the same map on the orientation of the stable leaves. Recall that Proposition \ref{p.orient} allows to identify some direct bases $(\vec{e}_1,\vec{e}_2,\vec{e}_3)$ of $U$ with $\vec{e}_3=X$. The tori $T_{2j-1}^{in}$ and $T_{2j}^{in}$ are oriented by $(\vec{e}_1,\vec{e}_2)$. Observing Figure~\ref{f.Annulus-not-Mobius-2}, one notices that:
\begin{itemize}
\item[--] if $\vec{e}_s$ is tangent to $\ell^s_{2j-1}$ pointing towards $c_{2j-1}^{2,s}$ and $\vec{e}_u$ is  tangent to $\tau_{-1/2}(\ell^s_{2j-1})$ pointing towards $\tau_{-1/2}(c_{2j-1}^{2,s})$, then $(\vec{e}_s,\vec{e}_u)$ is a direct basis of $T_{2j-1}^{in}$;
\item[--] if $\vec{e}_s$ is tangent to $\ell^s_{2j}$ pointing towards $c_{2j}^{2,s}$ and $\vec{e}_u$ is  tangent to $\tau_{1/2}(\ell^s_{2j})$ pointing towards $\tau_{1/2}(c_{2j}^{2,s})$, then $(\vec{e}_s,\vec{e}_u)$ is a direct basis of $T_{2j}^{in}$.
\end{itemize}
On the other hand, ${\widebar\varphi}^0\circ\Theta:\partial^{in} U\to\partial^{in} U$ preserves the 2-dimensional orientations. As a consequence, (5) is equivalent to 
 \begin{enumerate}
\item[(6)] ${\widebar\varphi}^0\circ\Theta$ maps $\ell^s_{2j-1}$ oriented towards $c_{2j-1}^{2,s}$ to $I^s_{2j}$ oriented towards $c_{2j}^{2,s}$.
\end{enumerate} 
Due to our choice of orientations, (6) means that ${\widebar\varphi}^0\circ\Theta:\ell^s_{2j-1}\to I^s_{2j-1}$ preserves the orientations. This completes the proof of the claim and of Proposition~\ref{p.annulus-psi}.
\end{proof}

\begin{figure}[htp]
 \hspace{-1cm} \includegraphics[totalheight=5.5cm]{Annulus-not-Mobius-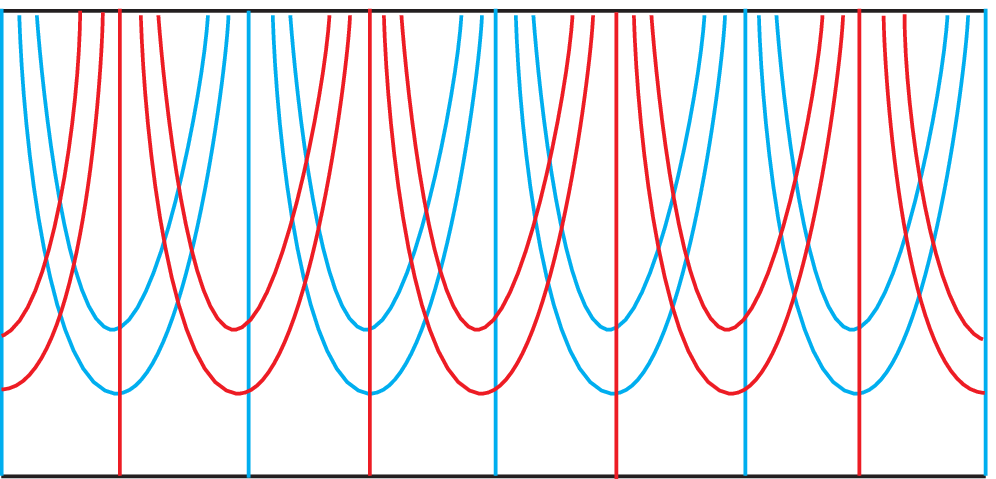}
  \caption{The basis $(\vec{e}_s,\vec{e}_u)$ in the proof of of Claim~\ref{c.annulus-psi}}\label{f.Annulus-not-Mobius-2}
\end{figure}

\subsubsection{Homeomorphism between $W^s_{loc}(\widebar\alpha_j^m)$ and $W^s_{loc}(\alpha_j^m)$.}

The second step of our proof of item~(3) of Theorem~\ref{t.con} consists in showing that $W^s_{loc}(\alpha_j^0)$ is homeomorphic to $W^s_{loc}(\widebar\alpha_j^0)$ for every $j$. We will actually prove the following stronger statement.  

\begin{proposition}
\label{p.relation-alpha-beta}
For $m=0,\dots,2n$, there exists a homeomorphism $h^m:W^m\to \widebar W^m$ such that $h^m(W^s_{loc}(\alpha_j^m))=W^s_{loc}(\widebar\alpha_j^m)$ for $j=1,\dots,2n$.
\end{proposition}

For sake of simplicity, we will give a detailed proof in the case $m=0$, and explain afterwards how it should be adapted in the case $m\geq 1$. We consider a continuous one-parameter family of strongly transverse gluing maps $(\chi_s^0)_{s\in [0,1]}$ such that $\chi_0^0 = {\widebar\varphi}^0$ and $\chi_1^0 = \varphi^0$. For every $s\in [0,1]$, we denote by $Y_{t,s}^0$ to be the flow induced by $X_t$ on  the closed manifold $W_s^0 := U/\chi_s^0$. As in subsubsection~\ref{sss.construction-alpha}, we denote by $R_{2j-1,s}^0$ and $R_{2j,s}^0$ the connected components of $D_{2j-1}^s \cap (\chi_s^0\circ\Theta)(D_{2j}^s)$ and $D_{2j}^s \cap (\chi_s^0\circ\Theta)(D_{2j-1}^s)$ which are equal to $\widebar R_{2j-1}$ and $\widebar R_{2j}$ for $s=0$ and which depends continuously on $s$. Recall that $R_{2j-1}^0:=R_{2j-1,1}^0$ and  $R_{2j}^0:=R_{2j,1}^0$. 

The main obstacle to prove Proposition \ref{p.relation-alpha-beta} is to find a good way for connecting $W_{loc}^s(\alpha_j^0)$ with $W_{loc}^s(\widebar\alpha_j^0)$. Indeed, the flow $Y_{t,s}^0$ needs not preserve any foliation, hence there is no way to define a ribbon playing the role of a local stable stable manifold for a periodic orbit of $Y_{t,s}^0$. To overcome this difficulty, we will introduce the concept of \emph{pseudo-periodic orbit}. 

For every $j\in \{1, \dots, 2n\}$, define a \emph{pseudo-periodic orbit} $(\alpha_j^0)'$ of $(W^0, Y_t^0)$ associated to the periodic orbit $\alpha_j^0$ as follows. We denote by $\cF^{s,0}$ and $\cF^{u,0}$ the invariant foliations of the Anosov flow $Y_t^0$, and  by $f^{s,0}_i$ and $f^{u,0}_i$ the one-dimensional foliations induced by $\cF^{s,0}$ and $\cF^{u,0}$ on the torus $T_{i}^0$. We denote the four corners of the rectangle $R_{2j}^0$ by $a_{2j}$, $b_{2j}$, $c_{2j}$, $d_{2j}$ where $a_{2j}$ and $b_{2j}$ are on the same leaf of the stable $f_{2j}^{s,0}$, as well as $c_{2j}$ and $d_{2j}$. We call $\ell_{2j}$ the leaf of $f^{s,0}_{2j}$ containing $a_{2j}$ and $b_{2j}$, and $I_{2j}$ the interval in $\ell_{2j}$ which is bounded by $a_{2j}$ and $b_{2j}$. Then it is easy to observe that $\Theta^{-1} \circ (\varphi^0)^{-1}(I_{2j})$ is a leaf of the foliation $f^{s,0}_{2j-1}$ that crosses the rectangle $R_{2j-1}^0$. We denote by $I_{2j-1}$ the intersection of $\Theta^{-1} \circ (\varphi^0)^{-1}(I_{2j})$ with $R_{2j-1}^0$. Then $\ell_{2j}':= \Theta^{-1}\circ (\varphi^0)^{-1} (I_{2j-1})$ is a leaf of $f_{2j}^{s,0}$ that crosses $R_{2j}^0$. We take a point $q'\in \ell_{2j}'$ and define an orbit arc $(q'q)_v$ of $Y_t^m$ that starts at $q'$ in $\ell_{2j}'$ and ends in $q=\varphi^0 \circ \Theta \circ \varphi^0\circ \Theta (q')$ in $\ell_{2j}$.

Take a small arc $(q_+'q_-')$ in $\ell_{2j}'$ which is a neighborhood of $q'$ in $\ell_{2j}'$. Define $q_{+/-}:= \Theta (q_{+/-}')\in \ell_{2j}$. The arc $(q_+ q_-)$  is a neighborhood of $q$ in $\ell_{2j}$.  Define $R_v$ to be the rectangle that is the union of the orbit arcs of $Y_t^0$ starting in $(q_+' q_-')$ and ending in $(q_+ q_-)$. Define $D_{\varphi^0}^s$ the sub-strip of $D_{2j}^s$ which is bounded by $\ell_{2j}$ and $\ell_{2j}'$. Take an arc $(q'q)_h$ in $D_{\varphi^0}^s$ and a rectangle neighborhood $R_h$ of  $(q'q)_h$ such that $R_h \subset D_{\varphi^0}^s$ and $(q_+ q_-)$ and $(q_+' q_-')$ are two edges of $R_h$. Let $(\alpha_j^0)' :=(q'q)_v \cup (q'q)_h$, and  $W_{loc}^s((\alpha_j^0)'):=R_v \cup R_h$, see Figure \ref{f.9}. We say that  $(\alpha_j^0)'$ a \emph{pseudo-periodic orbit} of $Y_t^0$ associated to $\alpha_j^0$. Notice that $W_{loc}^s((\alpha_j^0)')$ is a closed ribbon (\emph{i.e.} a Möbius band or an annulus) and  the closed curve $(\alpha_j^0)'$ is a core of this closed ribbon. Following the definition, due to continuity, we know that up to isotopy, the ribbon link $\bigcup_{j=1}^{2n} W_{loc}^s((\alpha_j^0)')$ does not depend on the choices of $q'$, $q_+'$, $q_-'$, $R_h$ and $(q'q)_h$.  This means that  the ribbon link  $\bigcup_{j=1}^{2n} W_{loc}^s((\alpha_j^0)')$ is well-defined up to isotopy. 

\begin{figure}[htp]
\begin{center}
  \includegraphics[totalheight=5.3cm]{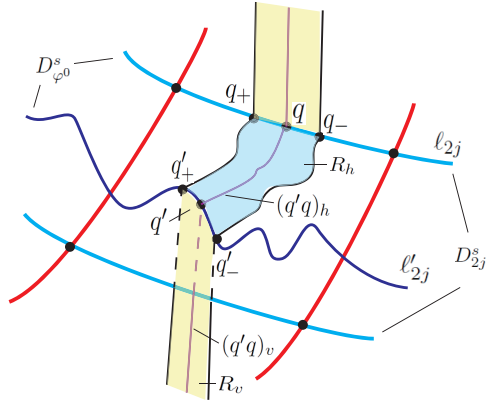}\\
  \caption{The construction of $W_{loc}^s((\alpha_j^m)')$}\label{f.9}
\end{center}
\end{figure}

The definition of pseudo-periodic orbit is in a combinatorial way, and it only depends on hyperbolic plug, intersecting rectangles of stable strips and  transverse glued unstable strips. Hence we can generalize this definition to several cases:
\begin{enumerate}
\item We can define a pseudo-periodic orbit $(\widebar\alpha_j^0)'$ and a closed ribbon $W_{loc}^s((\widebar\alpha_j^0)')$ associated to the periodic orbit $\widebar\alpha_j^0$ of the flow $\widebar Y_t^0$, through replacing $\varphi_0$, $Y_t^0$, $R_{2j-1}^0$, $R_{2j}^0$, $Y_t^0$ by ${\widebar\varphi}^0$, $\widebar Y_t^0$, $\widebar R_{2j-1}$, $\widebar R_{2j}$ respectively.
\item Even more, we can define a pseudo-periodic orbit $(\alpha_{j,s}^0)'$  and  a closed ribbon $W_{loc}^s((\alpha_{j,s}^0)')$ associated to the rectangles $R_{2j-1,s}^0$  and $R_{2j,s}^0$ by replacing $\varphi^0$, $Y_t^0$, $R_{2j-1}^0$, $R_{2j}^0$ by $\chi_s$, $Y_{t,s}^0$, $R_{2j-1,s}^0$, $R_{2j,s}^0$ respectively.
\end{enumerate}
The corresponding ribbons links are well-defined up to isotopy.

Now, we can identify the manifolds $W^0=U/\varphi^0$ with $W_s^0=U/\chi_s^0$ as follows. Decompose $U$  into $U=U_0 \cup V$ where $V$ is the closure of $U-U_0$ such that:
\begin{enumerate}
\item $V$ is the union of $4n$ connected components $V_1, \dots, V_{4n}$;
\item $V_i$ ($i=1,\dots, 4n$) is parameterized as $\TT^2 \times [0,1]$ such that $\TT^2 \times \{0\} = T_i^{in}$ and the  parameterized $I$-bundle  corresponds to the flowlines of $X_t \mid_{V_i}$ that start at $\TT^2 \times \{0\}$;
\item $(U, X_t)$ and $(U_0, X_t )\mid_{U_0}$ are orbitally equivalent.
\end{enumerate}
Then $Y_{t,s}^0$ can be regarded as a flow on the manifold $W^m$ as follows:
\begin{enumerate}
\item $Y_{t,s}^0\mid_{U_0}= Y_t^m \mid_{U_0}=X_t \mid_{U_0}$,
\item on every $V_i$, the flowline starting at $x\in \TT^2 \times \{0\}$ is the arc $\bigcup_{0\leq u\leq 1} (\chi_{su}\circ {\widebar\varphi}_0^{-1} (x), u)$.
\end{enumerate}
From now on we can think that all flows $Y_{t,s}^0$, with $s\in[0,1]$, are on the same manifold $W^0$. Since $\chi_s$, $R_{2j-1,s}$ and $R_{2j,s}$ depend continuously on $s$, we get that for every pair of real numbers $s_1, s_2 \in [0,1]$, the ribbon link $\bigcup_{j=1}^{2n} W_{loc}^s((\alpha_{j,s_1}^0)')$ is isotopic to the ribbon link $\bigcup_{j=1}^{2n} W_{loc}^s((\alpha_{j,s_2}^0)')$ in $W^0$. In particular, by taking $s_1=0$ and $s_2=1$ and noticing that $W_{loc}^s((\alpha_{j,0}^0)')=W_{loc}^s((\widebar\alpha_j^0)')$ and $W_{loc}^s((\alpha_{j,1}^0)')=W_{loc}^s((\alpha_j^0)')$, we get:

\begin{lemma}\label{l.bisoinv}
There exists a homeomorphism $g: W^0 \to \widebar W^0$ such that $g(W_{loc}^s(\alpha_j^0)')= W_{loc}^s((\widebar\alpha_j^0)')$ for every $j\in \{1,\dots, 2n\}$. 
\end{lemma} 

Moreover, up to isotopy, the ribbon link $\bigcup_{j=1}^{2n} W_{loc}^s((\alpha_j^0)')$ and the ribbon link $\bigcup_{j=1}^n W_{loc}^s(\alpha_j^0)$ are the same. More precisely, we have:

\begin{lemma}\label{l.cequal}
There exists a self-homeomorphism $h$ of the manifold $W^0$, isotopic to identity, such that $h((\alpha_j^0)')=\alpha_j^0$ and $h(W_{loc}^s((\alpha_j^0)'))= W_{loc}^s(\alpha_j^0)$ for $j=1,\dots,2n$.
\end{lemma}

\begin{proof}
Recall that the periodic orbit $\alpha_j^0$ intersects the torus $T_{2j}^0$ at a single point $q_{2j}^0$. For every point $r\in T_{2j}^0$, denote by $\ell^s(r)$ and $\ell^u(r)$ the leaves of the foliation $f^{s,0}_{2j}$ and $f^{u,0}_{2j}$ containing the point $r$. Take an arc $(q_{2j}^{0,+} q_{2j}^{0,-})$ which is a neighborhood of the point $q_{2j}^0$ in the leaf $\ell^s(q_{2j}^0)$. Let $q_+':= \ell_{2j}' \cap \ell^u (q_{2j}^{0,+})$ and $q_-':= \ell_{2j}' \cap \ell^u (q_{2j}^{0,-})$. Let $a:= \ell_{2j}' \cap \ell^u (q_+)$ and $b:=\ell_{2j}' \cap \ell^u (q_-)$. Let $abq_- q_+$ be the rectangle with vertices  $a$, $b$, $q_-$, $q_+$, and edges included alternatively arcs in leaves of $f^{s,0}_{2j}$ and $f^{u,0}_{2j}$. One can easily check that $S_1 := R_v \cup abq_- q_+$ is isotopic to some $W_{loc}^s((\alpha_j^0)')$. For every point $x\in q_+' q_-' q_{2j}^{0,+} q_{2j}^{0,-}$, there exists a positive number $t(x)$ such that $Y_{t(x)}^0 (x)\in T_{2j}^{in}$ corresponds to the first return map of $Y_t^m$ from the rectangle $q_+' q_-' p_{2j}^+ p_{2j}^-$.  See Figure \ref{f.10}. Define 
$$V:= \bigcup_{x\in q_+' q_-' q_{2j}^{0,+} q_{2j}^{0,-}, 0\leq t \leq t(x)} Y_t^0 (x)\quad\mbox{and}\quad S_2 = \bigcup_{x\in q_{2j}^{0,+} q_{2j}^{0,-}, 0\leq t \leq t(x)} Y_t^0 (x).$$ 
One can easily check that $V$ is homeomorphic to a solid torus. It is obvious that $S_2$ is isotopic to $W_{loc}^s(\alpha_j^0)$, and $S_1$ and $S_2$ are two parallel annuli in the torus $\partial V$. Hence $S_1$ and $S_2$ are isotopic, and therefore $W_{loc}^s(\alpha_j^0)$ and $W_{loc}^s((\alpha_j^0)')$ are isotopic. Further notice that the isotopies for every two different choices of $j$ have disjoint supports. Hence there exists a self-homeomorphism $h$ on $W^0$ that satisfies the conclusion of the lemma.
\end{proof}

\begin{figure}[htp]
\begin{center}
  \includegraphics[totalheight=6.7cm]{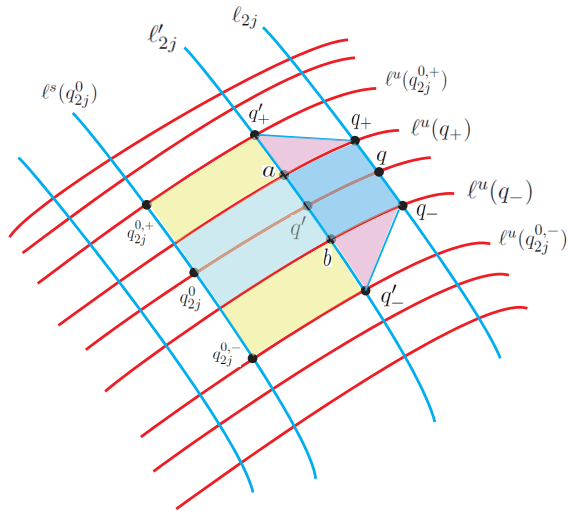}\\
  \caption{Some notations during the proof of Lemma \ref{l.cequal}}\label{f.10}
\end{center}
\end{figure}

The same arguments yield an analog of Lemma~\ref{l.cequal} for the periodic orbits $\widebar\alpha_{1}^0,\dots,\widebar\alpha_{2n}^0$:

\begin{lemma}\label{l.cequal-beta}
There exists a self-homeomorphism $\widebar h$ of $\widebar W^0$, isotopic to identity, such that $\widebar h((\widebar\alpha_j^0)')=\widebar\alpha_j^0$ and $\widebar h(W_{loc}^s((\widebar\alpha_j^0)'))= W_{loc}^s(\widebar\alpha_j^0)$ for $j=1,\dots,2n$.
\end{lemma}

\begin{proof}[Proof of Proposition \ref{p.relation-alpha-beta}]
The case $m=0$ of Proposition~\ref{p.relation-alpha-beta} follows immediately from Lemma~\ref{l.bisoinv}, Lemma~\ref{l.cequal} ane Lemma~\ref{l.cequal-beta}. Moreover, all the constructions and the arguments above can be modified to cover the case $m\geq 1$: it suffices to replace the rectangles $R_{2j-1},R_{2j}$ by the rectangles $L_{2j-1},L_{2j}$ for $j\leq m$. 
\end{proof}

\subsection{The ribbon link $W^s_{loc}(\alpha_1^m)\cup\dots\cup W^s_{loc}(\alpha_{2n}^m)$}
\label{ss.homeomorphic}

We will now prove item (4) of Theorem~\ref{t.con}. In view of Proposition~\ref{p.relation-alpha-beta}, it is equivalent to prove:

\begin{proposition}\label{p.h1}
For $m=1,\dots,2n$, there exists a homeomorphsim $\widebar h^m: \widebar W^m \longrightarrow \widebar W^0:$ such that $\widebar h^m(W^s_{loc}(\widebar\alpha_j^m))= W^s_{loc}(\widebar\alpha_j^0)$ for every $j\in \{1,\dots, 2n\}$.
\end{proposition}

Let us briefly recall our notations. For $m=0,\dots,2n$, we have defined an explicit gluing diffeomorphism ${\widebar\varphi}^m:\partial^{out} U\to\partial^{in}U$, considered the flow $\widebar Y_t^m$ induced by $X_t$ on the quotient manifold $\widebar W^m=U/{\widebar\varphi}^m$. Although $\widebar Y_t^m$ is not necessarily an Anosov flow, it preserves two transverse foliations $\widebar\cF^{s,m}$ and $\widebar\cF^{u,m}$ induced by $\cF^s$ and $\cF^u$ on $U$. See Proposition \ref{p.construction-foliations} for the definitions of 
$\cF^s$ and $\cF^u$.
 We have constructed $2n$ some periodic orbits $\widebar\alpha_1^m,\dots,\widebar\alpha_{2n}^m$ of the flow. We denote by $W^s_{loc}(\widebar\alpha_j^m)$ a tubular neighborhood of the periodic orbit $\widebar\alpha_j^m$ in the leaf of $\widebar\cF^{s,m}$ containing this orbit. We recall that this is an abuse of notation, since $W^s_{loc}(\widebar\alpha_j^m)$ is not necessarily a local stable set of $\widebar\alpha_j^m$. 

The proof of Proposition~\ref{p.h1} is quite long, and will be divided into several steps. We fix an integer $m=1,\dots,2n$ for the remainder of the section.

\subsubsection{Preliminary: tunnels and ribbons in good positions in a tunnel.}  Let $D^s$ be a stable strip of $(U,X)$ \emph{i.e.} a connected  component of $\partial^{in}U\setminus\cL^s$. Since $D^s$ is disjoint from $W^s(\Lambda)$, the forward orbit of any point $x\in D^s$ will eventually exit $U$ at some point $\Theta(x)=X^{T(x)}(x)\in\partial^{out}U$. Note that $D^u=\Theta(D^s)$ is an unstable strip, \emph{i.e.} a connected component of $\partial^{out} U\setminus\cL^u$. Let 
$$E=\{X^t(x), x\in D^s, 0\leq t\leq T(x)\}$$
be the union of the orbit segments going from $D^s$ to $D^u$ in $U$. We call $E$ the \emph{tunnel} joining $D^s$ to $D^u$. One can easily verify that such a tunnel is homeomorphic to $\RR^2\times [0,1]$ through a homeomorphism mapping $D^s$ to $\RR^2\times\{0\}$, mapping $D^u$ to $\RR^2\times\{1\}$, mapping the orbits of the flow $X_t$ in $E$ to the vertical  segments $\{(\cdot,\cdot)\}\times [0,1]$, mapping the leaves of the stable foliation $\cF^s\cap E$ to the vertical bands $\{\cdot\}\times\RR\times [0,1]$ and mapping the leaves of the unstable foliation $\cF^s\cap E$ to the vertical bands $\RR\times\{\cdot\}\times [0,1]$.

Consider a \emph{ribbon} in $E$, \emph{i.e.} the image an embedding $R:[-1,1]\times [0,1]\hookrightarrow E\simeq \RR^2\times [0,1]$. We call $R([-1,1]\times\{0\})$ and $R([-1,1]\times\{1\})$ the \emph{ends} of the ribbon $R$. We say that this ribbon $R$ is \emph{in good position} (with respect to the foliation $\cF^s$) if there exists a function $r_s:[0,1]\to\RR$ such that 
$$\forall t\in[0,1], \quad R([-1,1]\times\{t\})\subset \{r_s(t)\}\times\RR\times\{t\}.$$
Observe that this definition implies that for every $t\in[0,1]$, the arc $R([-1,1]\times\{t\})$ must be included in a leaf of the foliation $\cF^s\cap E$. This also implies that one of the two ends of $R$ must be included in $D^s$ and the other must be included $D^u$ respectively. The following fact is a immediate consequence of the convexity of the set of ribbons which are in good position in $E$ for the natural affine structure on $E\simeq \RR^2\times [0,1]$:

\begin{fact}
\label{f.ribbons-isotopic}
Let $R,R'$ be two ribbons in good positions in a tunnel $E$, with the same ends. Then $R$ and $R'$ are isotopic, with fixed ends, among ribbons in good position.
\end{fact}

\subsubsection{Step 1. Construction of an explicit diffeomorphism $\widebar h_\epsilon^m:\widebar W^m\to \widebar W^0$.} Recall that the gluing maps ${\widebar\varphi}^0,{\widebar\varphi}^m$ were defined by the formula
$${\widebar\varphi}^0_{|T_i^{out}}=\tau_{+1/2}\circ\sigma\quad\mbox{and}\quad{\widebar\varphi}^m_{|T_i^{out}}=\left\{\begin{array}{ll}\tau_{-1/2}\circ\sigma & \mbox{if }i\leq 2m\\\tau_{+1/2}\circ\sigma & \mbox{if }i>2m\end{array}\right.$$
where $\tau_{v}$ is the translation $(x,y)\mapsto (x+v,y)$ in the coordinate system $\xi_i:T_i^{in}\to \TT_i=(\RR/(2i+2)\ZZ)\times(\RR/\ZZ)$. As a consequence, 
$${\widebar\varphi}^m_{|T_i^{out}}=\left\{\begin{array}{ll}\tau_{-1}\circ{\widebar\varphi}^0_{|T_i^{out}} & \mbox{if }i\leq 2m\\ {\widebar\varphi}^0_{|T_i^{out}} & \mbox{if }i>2m\end{array}\right.$$
One may add a coordinate $t$ to the coordinate system $(x,y)$ on $T_i^{in}$ in order to obtain a coordinate system $(x,y,t)\in (\RR/(2i+2)\ZZ)\times(\RR/\ZZ)\times [0,1]$ on a collar neighborhood $V_i$ of the torus $T_i^{in}$ in the $3$-manifold $U$. One can choose the coordinate $t$ so that the torus $T_i^{in}$ corresponds to $\{t=0\}$, and the vector field $X$ corresponds to $\frac{\partial}{\partial t}$. Given $\epsilon\in ]0,1]$, we denote by $V_{i,\epsilon}$ the part of $V_i$ corresponding to $t\in [0,\epsilon]$ in the $(x,y,t)$ coordinate system. Then we define a homeomorphism $h_\epsilon:U\to U$ by setting:
$$h_{\epsilon}(x,y,t)=\left(x+1-\frac{t}{\epsilon},y,t\right)\mbox{ on }V_{i,\epsilon}\mbox{ for }i=1,\dots,2m\mbox{ and }
h_\epsilon=\mathrm{Id}\mbox{ on }U\setminus \bigcup_{i=1}^{2m} V_{i,\epsilon}.$$
Notice that $h_{\epsilon}=\tau_{+1}$ on $T_i^{in}$ for $i\leq 2m$, $h_{\epsilon}=\mathrm{Id}$ on $T_i^{in}$  for $i>2m$, and  $h_{\epsilon}=\mathrm{Id}$ on $T_i^{out}$ for every $i$. It follows that $h_{\epsilon}\circ {\widebar\varphi}^m={\widebar\varphi}^0\circ h_\epsilon$. As a consequence, $h_\epsilon$ induces a diffeomorphism 
$$\widebar h_\epsilon:\widebar W^m=U/{{\widebar\varphi}^m}\longrightarrow \widebar W^0=U/\widebar\varphi^0.$$  

\medskip

Using the diffeomorphism $\widebar h_\epsilon$ (for a given $\epsilon$), we can map the ribbons $W^s_{loc}(\widebar\alpha_1^m),\dots, W^s_{loc}(\widebar\alpha_{2n}^m)$ in $\widebar W^0$. Then we  are left to prove that the ribbon links $W^s_{loc}(\widebar\alpha_1^0) \cup \dots \cup W^s_{loc}(\widebar\alpha_{2n}^0)$ and $\widebar h_\epsilon(W^s_{loc}(\widebar\alpha_1^m))\cup \dots \cup \widebar h_\epsilon(W^s_{loc}(\widebar\alpha_{2n}^m))$ are isotopic in $\widebar W^0$. In practice, we will describe an isotopy between $\widebar h_\epsilon(W^s_{loc}(\widebar\alpha_j^m))$ and $W^s_{loc}(\widebar\alpha_{j}^0)$ for every $j$, so that the supports of the isotopies corresponding to the different integers $j$ are pairwise disjoint. One may already observe that, the ribbons $\widebar h_\epsilon(W^s_{loc}(\widebar\alpha_j^m))$ and $W^s_{loc}(\widebar\alpha_{j}^0)$ coincide for $j>m$. This follows from two facts: first, we know that $(\widebar \pi^0)^{-1}(\widebar\alpha_j^0)=(\widebar \pi^m)^{-1}(\widebar\alpha_j^m)$ for $j>m$ (see Remark \ref{r.beta-indep-m}), and second, we observe that $(\widebar\pi^0)^{-1}(\widebar\alpha_j^0)=(\widebar \pi^m)^{-1}(\widebar\alpha_j^m)$ is disjoint from the support of the  diffeomorphism $h_\epsilon$ for $j\neq m$. As a consequence, we will focus our attention on the case 
$j\leq m$.
 
\medskip

Until the very last step of the proof, we fix an integer $j\in\{1,\dots,2m\}$. For sake of simplicity, we will denote 
$$A^0:=W^s_{loc}(\widebar\alpha_{j}^0)\mbox{ and }A_\epsilon^m:=\widebar h_\epsilon(W^s_{loc}(\widebar\alpha_j^m)).$$  
The tori $\widebar T_{2j-1}^0=\widebar \pi^0(T_{2j-1}^{in})=\widebar \pi^0(T_{2j-1}^{out})$ and $\widebar T_{2j}^0=\widebar \pi^0(T_{2j}^{in})=\widebar \pi^0(T_{2j}^{out})$  will play a central role in the remainder of the proof. For $i=2j-1,2j$, we consider the one-dimensional foliation $\widebar f^{s,0}_i=\widebar\pi^0(f^{s,in}_i)=\widebar\pi^0(f^{s,out}_i)$ and $\widebar f^{u,0}_i=\widebar\pi^0(f^{u,in}_i)=\widebar\pi^0(f^{u,out}_i)$ on the torus $\widebar T_i^0$. Recall that we have constructed some stable and unstable strips
$$D_{2j-1}^s\subset A_{2j-1}^{1,s},\quad D_{2j-1}^u\subset A_{2j-1}^{1,u},\quad 
D_{2j}^s\subset A_{2j}^{1,s},\quad D_{2j}^u\subset A_{2j}^{1,u}$$
so that 
$$D_{2j-1}^u=\sigma(D_{2j-1}^s),\quad D_{2j}^u=\sigma(D_{2j}^s),\quad D_{2j-1}^u=\Theta(D_{2j}^s),\quad  D_{2j}^u=\Theta(D_{2j-1}^s).$$ 
See Proposition~\ref{p.selection-strips}. We denote by $E_j^+$ the tunnel joining the stable strip $D^s_{2j}$ to the unstable strip $D^u_{2j-1}$ in $U^+$, and by $E_j^-$ the tunnel joining the stable strip $D^s_{2j-1}$ to the unstable strip $D^u_{2j}$ in $U^-$. 

Using the definition of the basis $(\vec{e_1},\vec{e_2}, \vec{e_3})$ in Proposition~\ref{p.orient} and the definition of the translation $\tau_{-1}$ in section~\ref{sss.construction-psi}, one can check that the orientations of the Reeb annuli, the cyclic order of these annuli, and the orientation of the translation $\check\tau_{-1}:=\widebar\pi^0\circ\tau_{-1}\circ(\widebar\pi^0_{|\partial^{in}U})^{-1}$ on the tori $\widebar T_{2j-1}^0$ and $\widebar T_{2j}^0$ are just as depicted on Figure~\ref{f.strips} below.

\begin{figure}[ht]
\centerline{\includegraphics[width=12cm]{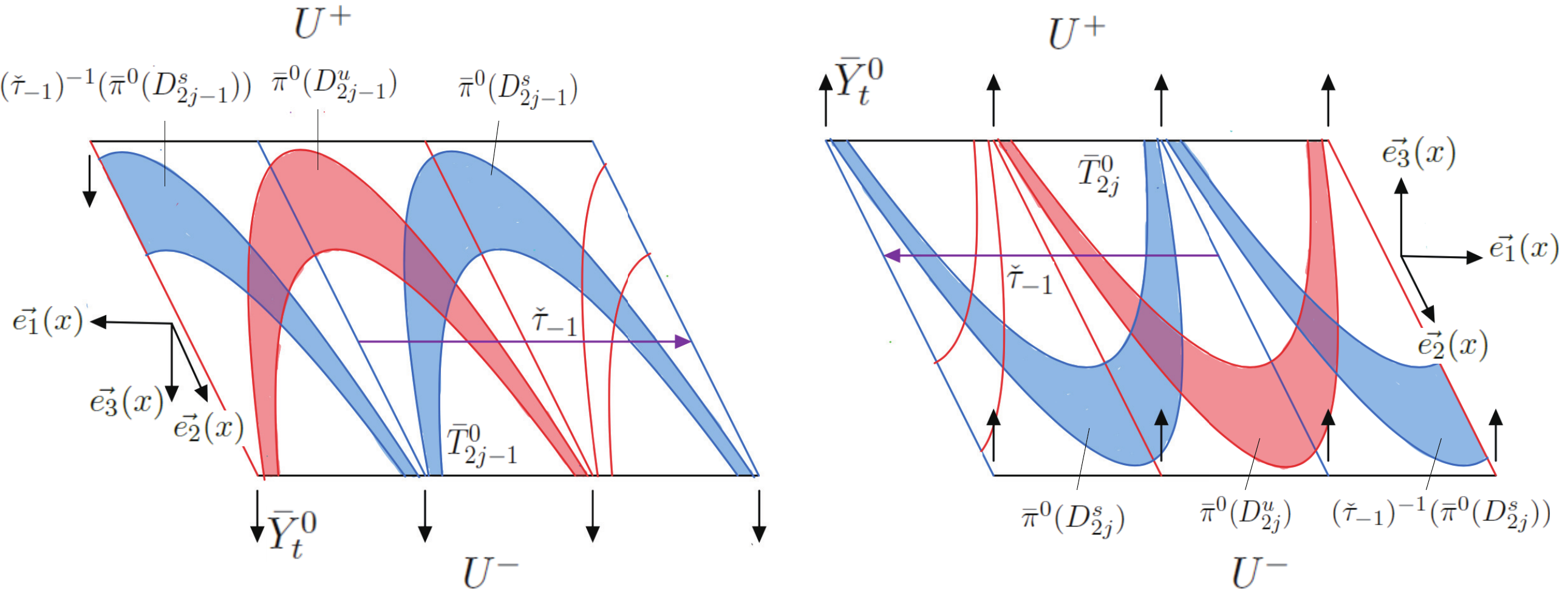}}
\caption{\label{f.strips}The tori $\widebar T_{2j-1}^0$ and $\widebar T_{2j}^0$, the strips $\widebar \pi^0(D_{2j-1}^s)$, $\widebar \pi^0(D_{2j-1}^u)$, $\widebar \pi^0(D_{2j}^s)$ and $\widebar \pi^0(D_{2j}^u)$, the direction of the vector field $\widebar Y^0$ and the direction of the translation $\check\tau_{-1}$.}
\end{figure}

\subsubsection{Step 2. Decomposition of the ribbon $A^0=W^s_{loc}(\widebar\alpha_{j}^0)$.} We decompose $A^0$ in two halves 
$$A^0=A^0_+\cup A^0_- \mbox{ where } A^0_- :=A^0\cap \widebar\pi^0(U^-)\mbox{ and }A^0_+:=A^0\cap \widebar\pi^0(U^+).$$
We will now describe more precisely $A^0_-$ and $A^0_+$. For that purpose, recall that, the periodic orbit $\widebar\alpha_j^0$:
\begin{itemize}
\item[--] intersects the torus $\widebar T_{2j-1}^0$ at a single point $\widebar q_{2j-1}^0\in\widebar \pi^0(D_{2j-1}^s)\cap \widebar \pi^00(D_{2j-1}^u)$,
\item[--] intersects the torus $\widebar T_{2j}^0$ at a single point $\widebar q_{2j}^0\in\widebar \pi^0(D_{2j}^s)\cap \widebar \pi^0(D_{2j}^u)$,
\item[--] does not intersect the torus $\widebar T_{i}^0$ for $i\neq 2j-1,2j$. See Proposition~\ref{p.construction-beta}.
\end{itemize}
Let $\ell^s(\widebar q_{2j-1}^0)$  be the leaf of the foliation $\widebar f^{s,0}_{2j-1}$ containing the point  $\widebar q_{2j-1}^0$ and $\ell^s(\widebar q_{2j}^0)$  be the leaf of the foliation $\widebar f^{s,0}_{2j}$ containing the point $\widebar q_{2j}^0$. Let
 $$I^s(\widebar q_{2j-1}^0)=A^0\cap \widebar T_{2j-1}^0 \quad\mbox{and}\quad I^s(\widebar q_{2j}^0)=A^0\cap \widebar T_{2j}^0.$$ 
 Clearly, $I^s(\widebar q_{2j-1}^0)$ is a neighborhood of $\widebar q_{2j-1}^0$ in $\ell^s(\widebar q_{2j-1}^0)$, and $I^s(\widebar q_{2j}^0)$ is a neighborhood of $\widebar q_{2j}^0$ in  $\ell^s(\widebar q_{2j}^0)$. Adjusting the width of $A$ if necessary, we may assume that $I^s(\widebar q_{2j-1}^0)$ is a crossbar\footnote{By such we mean that the arc $I^s(\widebar q_{2j-1}^0)$ is included in the unstable strip $\widebar \pi^0(D^u_{2j-1})$ and that the ends of $I^s(\widebar q_{2j-1}^0)$ belong to the two boundary leaves of $\widebar \pi^0(D^u_{2j-1})$.} of the unstable strip $\widebar \pi^0(D^u_{2j-1})$, and $I^s(\widebar q_{2j}^0)$ is a crossbar of the unstable strip $\widebar \pi^0(D^u_{2j})$. By definition, $A^0_-$ is a ribbon joining $I^s(\widebar q_{2j-1}^0)$ to $I^s(\widebar q_{2j}^0)$ in $U^-$ and $A^0_+$ is a ribbon joining $I^s(\widebar q_{2j}^0)$ to $I^s(\widebar q_{2j-1}^0)$ in $U^+$.  See Figure~\ref{f.decomposition-A}. Since $\widebar q_{2j-1}^0\in \widebar \pi^0(D_{2j-1}^s)$ and $\widebar q_{2j}^0\in \widebar \pi^0(D_{2j}^s)$, $A^0_+$ is contained in the tunnel $E^+_j$ and $A^0_-$ is contained in the tunnel $E_j^-$.
 
 \begin{figure}[ht]
\centerline{\includegraphics[width=13cm]{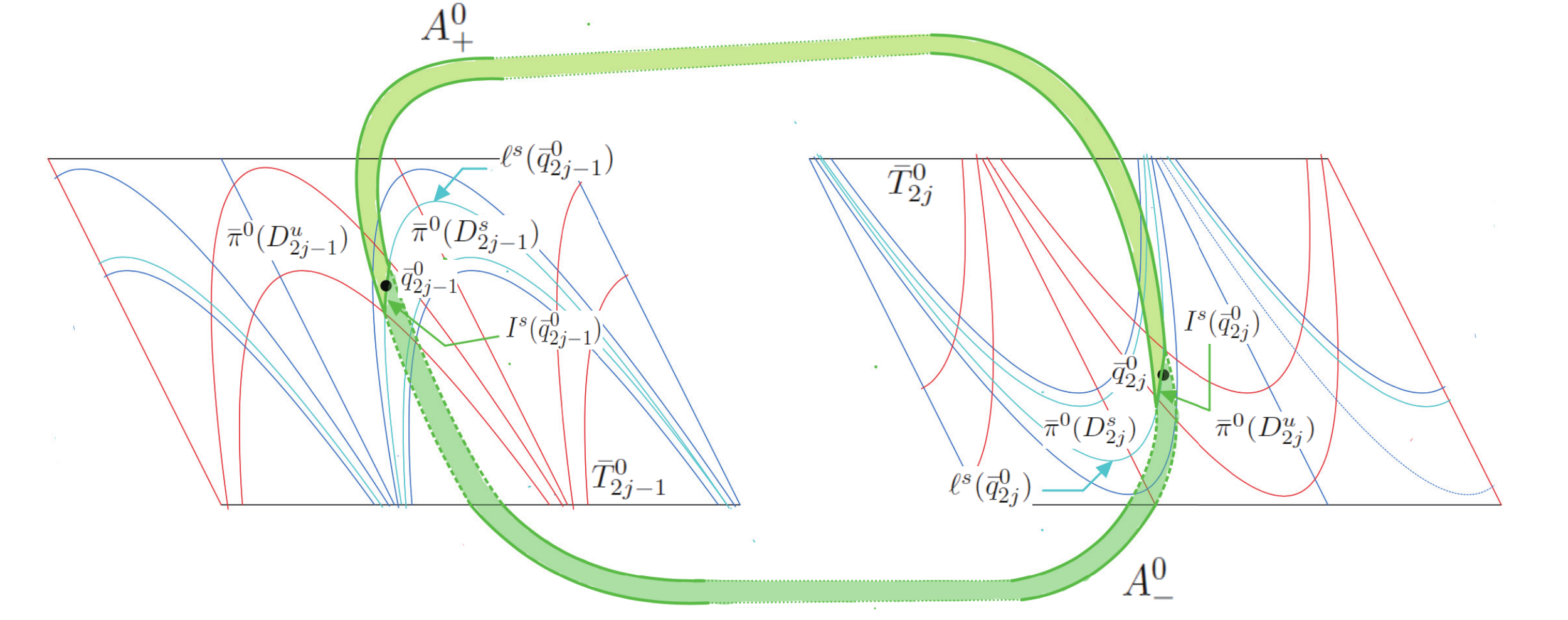}}
\caption{\label{f.decomposition-A}Decomposition of the closed ribbon $A^0=W^s_{loc}(\widebar\alpha_j^0)$.}
\end{figure}

\begin{remark}
\label{r.faithful}
Close to the tori $\widebar T_{2j-1}^0$ and $\widebar T_{2j}^0$, we have at our disposal some local coordinate systems  in which the flow $\widebar Y^0_t$, the foliations $\widebar f^{s,0}_{2j-1},\widebar f^{u,0}_{2j-1},\widebar f^{s,0}_{2j},\widebar f^{u,0}_{2j}$ and the translation $\check\tau_{-1}$ have a fully explicit simple form. As a consequence, our figures above and below represent quite faithfully the geometry of the ribbons $A^0=W^s_{loc}(\widebar\alpha_j^0)$ and $A^m_\epsilon=\widebar h_\epsilon(W^s_{loc}(\widebar\alpha_{2j}^m))$ close to the tori $\widebar T_{2j-1}^0$ and $\widebar T_{2j}^0$. On the contrary, we do not pretend to be able to represent faithfully the geometry and topology of the ribbons far from $\widebar T_{2j-1}^0$ and $\widebar T_{2j}^0$.
\end{remark}

\subsubsection{Step 3. Decomposition of the ribbon $A^m_\epsilon=\widebar h_\epsilon(W^s_{loc}(\widebar\alpha_{2j}^m))$.} We cut $A^m_\epsilon$ into four pieces:
$$A^m_\epsilon=A^m_{2j,\epsilon}\cup A^m_{+,_\epsilon}\cup A^m_{2j-1,\epsilon}\cup A^m_{-,\epsilon}$$ 
where
$$A^m_{2j,\epsilon}:=A^m_\epsilon\cap V_{2j,\epsilon},\quad A^m_{2j-1,\epsilon}:=A^m_\epsilon\cap V_{2j-1,\epsilon}$$
$$A^m_{+,\epsilon}:=A^m_\epsilon\cap (U^+\setminus V_{2j,\epsilon}),\quad A^m_{-,\epsilon}:=A^m_\epsilon\cap (U^-\setminus V_{2j-1,\epsilon}).$$
In order to describe more precisely the mutual position of these four parts in $\widebar W^0$, let us first recall that the periodic orbit $\widebar\alpha_j^m$:
\begin{itemize}
\item[--] intersects the torus $\widebar T_{2j-1}^m$ at a single point $\widebar q_{2j-1}^m\in \widebar \pi^m(D_{2j-1}^s)\cap \widebar \pi^m(D_{2j-1}^u)$,
\item[--] intersects the torus $\widebar T_{2j}^m$ at a single point $\widebar q_{2j}^m\in \widebar \pi^m(D_{2j}^s)\cap \widebar \pi^m(D_{2j}^u)$,
\item[--] does not intersect the torus $\widebar T_i^m$ for $i\neq 2j-1,2j$. 
\end{itemize}
Consider the points 
$$\widebar p_{2j-1}^m=\widebar h_\epsilon(\widebar q_{2j-1}^m)\quad\mbox{and}\quad\widebar p_{2j}^m=\widebar h_\epsilon(\widebar q_{2j}^m).$$ 
Notice that these two points do not depend on $\epsilon$ since the restriction $\widebar h_\epsilon$ to $\widebar\pi^m(\partial U)$ does not depend on this parameter (see Step 1). More precisely, recall that $\widebar h_\epsilon$ is induced by a diffeomorphism $h_\epsilon:U\to U$ such that
$$h_\epsilon=\mathrm{Id}\mbox{ on }T_{2j-1}^{out}\mbox{ and }h_\epsilon=\tau_{+1}\mbox{ on }T_{2j-1}^{in}.$$ 
Since $D_{2j-1}^u\subset T_{2j-1}^{out}$ and $D_{2j-1}^s\subset T_{2j-1}^{in}$, it follows that 
\begin{eqnarray*}
&&\widebar h_\epsilon(\widebar\pi^m(D_{2j-1}^u)) = \widebar\pi^0(h_1(D_{2j-1}^u))=\widebar\pi^0(D_{2j-1}^u)\\
&&\widebar h_\epsilon(\widebar\pi^m(D_{2j-1}^s))=\widebar\pi^0(h_1(D_{2j-1}^s))=\check\tau_{+1}(\widebar\pi^0(D_{2j-1}^s))
\end{eqnarray*}
 hence 
 $$\widebar p_{2j-1}^m\in \check\tau_{+1}(\widebar\pi^0(D_{2j-1}^s))\cap \widebar\pi^0(D_{2j-1}^u).$$
 And by a similar computation 
 $$\widebar p_{2j}^m\in \check\tau_{+1}(\widebar\pi^0(D_{2j}^s))\cap \widebar\pi^0(D_{2j}^u).$$
 We denote by  $\ell^s(\widebar p_{2j-1}^m)$ the  leaf of the foliation $\widebar f^{s,0}_{2j-1}$ containing $\widebar p_{2j-1}^m$, and  by  $\ell^s(\widebar p_{2j}^m)$ the  leaf of the foliation $\widebar f^{s,0}_{2j}$ containing $\widebar p_{2j}^m$. Let
 $$I^s(\widebar p_{2j-1}^m)=A^m_\epsilon\cap \widebar T_{2j-1}^0\mbox{ and }I^s(\widebar p_{2j}^m)=A^m_\epsilon\cap \widebar T_{2j}^0.$$ 
We claim that $I^s(\widebar p_{2j-1}^m)$ is contained in the leaf $\ell^s(\widebar p_{2j-1}^m)$. Indeed, since $\widebar h_\epsilon:U\to U$ is the identity on $U\setminus\cup_i V_{i,\epsilon}$,  the foliation $\widebar h_\epsilon(\widebar\cF^{s,m})$ coincides with the foliation $\widebar\cF^{s,0}$ outside the $\widebar\pi^0(\cup_i V_{i,\epsilon})$. In particular, $A^m_\epsilon=\widebar h_\epsilon(W^s_{loc}(\widebar\alpha_{j}^m))$ is contained in a leaf of the foliation $\widebar \cF^{s,0}$ outside $\widebar\pi^0(\cup_i V_i)$. It follows that $I^s(\widebar p_{2j-1}^m)$ is contained in a leaf of the foliation $\widebar f^{s,0}_{2j-1}$. This means that $I^s(\widebar p_{2j-1}^m)$ is a neighborhood of the point $\widebar p_{2j-1}^m$ in the leaf  $\ell^s(\widebar p_{2j-1}^m)$, as claimed. Then, by adjusting the width of the ribbon $A^m$ if necessary, we may assume that $I^s(\widebar p_{2j-1}^m)$ is a crossbar of the unstable strip $\widebar\pi^0(D_{2j-1}^u)$. By similar arguments,  $I^s(\widebar p_{2j}^m)$ is a neighborhood of the point $\widebar p_{2j}^m$ in the leaf  $\ell^s(\widebar p_{2j}^m)$ and we may assume that $I^s(\widebar p_{2j}^m)$ is a crossbar of the unstable strip $\widebar\pi^0(D_{2j}^u)$. Now we can describe the different parts of $A^m_\epsilon$ (see Figure \ref{f.decomposition-B}):
\begin{itemize}
\item[--] Due to the explicit form of the diffeomorphism $\widebar h_\epsilon$ (see Step 1), $A^m_{2j,\epsilon}$ is a straight ribbon joining $I^s(\widebar p_{2j}^m)$ to $(\widebar Y^0_\epsilon\circ \check\tau_{-1})(I^s(\widebar p_{2j}^m))$ in $\widebar\pi^0(V_{2j,\epsilon})$.
\item[--] The ribbon $A^m_{+,\epsilon}$ is joining $(\widebar Y_\epsilon^0\circ \check\tau_{-1})(I^s(\widebar p_{2j}^m))$ to $I^s(\widebar p_{2j-1}^m)$ inside the tunnel $E_j^+$. This ribbon is a neighborhood of the orbit segment joining $(\widebar Y_\epsilon^0\circ \check\tau_{-1})(\widebar p_{2j}^m)$ to  $\widebar p_{2j-1}^m$ in a leaf of  $\widebar\cF^{s,0}$.
\item[--] Then $A^m_{2j-1,\epsilon}$ is a straight ribbon joining $I^s(\widebar p_{2j-1}^m)$ to $(\widebar Y_\epsilon^0\circ \check\tau_{-1})(I^s(\widebar p_{2j-1}^m))$ in $V_{2j-1,\epsilon}$.
\item[--] Finally $A^m_{-,\epsilon}$ is joining $(\widebar Y_\epsilon^0\circ \check\tau_{-1})(I^s(\widebar p_{2j-1}^m))$ to $I^s(\widebar p_{2j}^m)$ inside the tunnel $E^-_j$. This ribbon is a neighborhood of the orbit segment joining $(\widebar Y_\epsilon^0\circ \check\tau_{-1})(\widebar p_{2j-1}^m)$ to $\widebar p_{2j}^m$ in a leaf of  $\widebar \cF^{s,0}$.  
\end{itemize}

\begin{figure}[ht]
\centerline{\includegraphics[width=13.5cm]{Decomposition-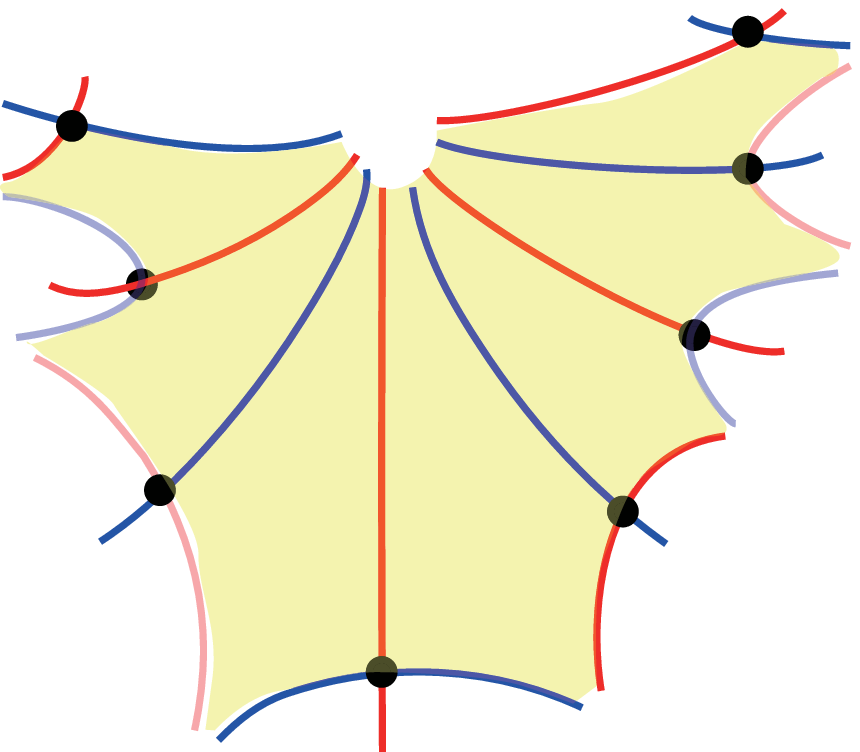}}
\caption{\label{f.decomposition-B}Decomposition of the closed ribbon $A^m_\epsilon=\widebar h_\epsilon(W^s_{loc}(\widebar\alpha_j^m))$.}
\end{figure}

\begin{remark}
\label{r.faithful-2}
In Steps 4, 5 and 6, we will describe some isotopies of the closed ribbons $A$ and $B$. These isotopies will be supported in small neighborhoods of the tori $\widebar T_{2j-1}^0$ and $\widebar T_{2j}^0$.  As a consequence, our figures below represent quite faithfully the isotopies we will consider (see Remark~\ref{r.faithful}). In principle, the isotopies could described by explicit formulas in the $(x,y,t)$ coordinate system used in Step 1, but we thought this would hardly shed any light on the proof. 
\end{remark}

\subsubsection{Step 4. Isotopy from $A^m_1=\widebar h_1(W^s_{loc}(\widebar\alpha_j^m))$ to $A^m=\lim_{\epsilon\to 0} \widebar h_\epsilon(W^s_{loc}(\widebar\alpha_j^m))$.}
The diffeomorphism $\widebar h_\epsilon$ has been defined for every $\epsilon\in (0,1]$. It does not converge towards a diffeomorphism as $\epsilon\to 0$. Nevertheless, using the explicit definition of $\widebar h_\epsilon$, one may easily observe that the closed ribbon $A^m_\epsilon=\widebar h_\epsilon(W^s_{loc}(\widebar\alpha_j^m))$ converges, as $\epsilon$ goes to $0$, to a closed ribbon $A^m$ which is isotopic to $A^m_1$. Moreover, the decomposition $A^m_\epsilon=A^m_{2j,\epsilon}\cup A^m_{+,\epsilon}\cup A^m_{2j-1,\epsilon}\cup A^m_{-,\epsilon}$ converges to a decomposition $A^m=A^m_{2j}\cup A^m_+\cup A^m_{2j-1}\cup A^m_-$ of the ribbon $A^m$. More precisely:
\begin{itemize}
\item[--] $A^m_{2j}$ is a straight ribbon joining the arc $I^s(\widebar p_{2j}^m)$ to the arc $\check\tau_{-1}(I^s(\widebar p_{2j}^m))$ in the torus $\widebar T_{2j}^0$;
\item[--] $A^m_+$ is joining the arc $\check\tau_{-1}(I^s(\widebar p_{2j}^m))$ to the arc $I^s_{\widebar p_{2j-1}^m}$ inside the tunnel $E_j^+$; 
it is  a neighborhood of the orbit segment joining $\check\tau_{-1}(\widebar p_{2j}^m)$ to $\widebar p_{2j-1}^m$ in a leaf of $\widebar\cF^{s,0}$; 
\item[--] $A^m_{2j-1}$ is a straight ribbon joining the arc $I^s(\widebar p_{2j-1}^m)$ to the  arc $\check\tau_{-1}(I^s(\widebar p_{2j-1}^m))$ in $\widebar T_{2j-1}^0$;
 \item[--] $A^m_-$ is joining the arc $\check\tau_{-1}(I^s(\widebar p_{2j-1}^m))$ to the arc $I^s_{\widebar p_{2j}^m}$ inside the tunnel $E_j^-$; it is a neighborhood of the orbit segment joining $\check\tau_{-1}(\widebar p_{2j-1}^m)$ to $\widebar p_{2j}^m$ in a leaf of $\widebar\cF^{s,0}$.
\end{itemize}
See figure~\ref{f.first-isotopy-B}.

\begin{figure}[ht]
\centerline{\includegraphics[width=12cm]{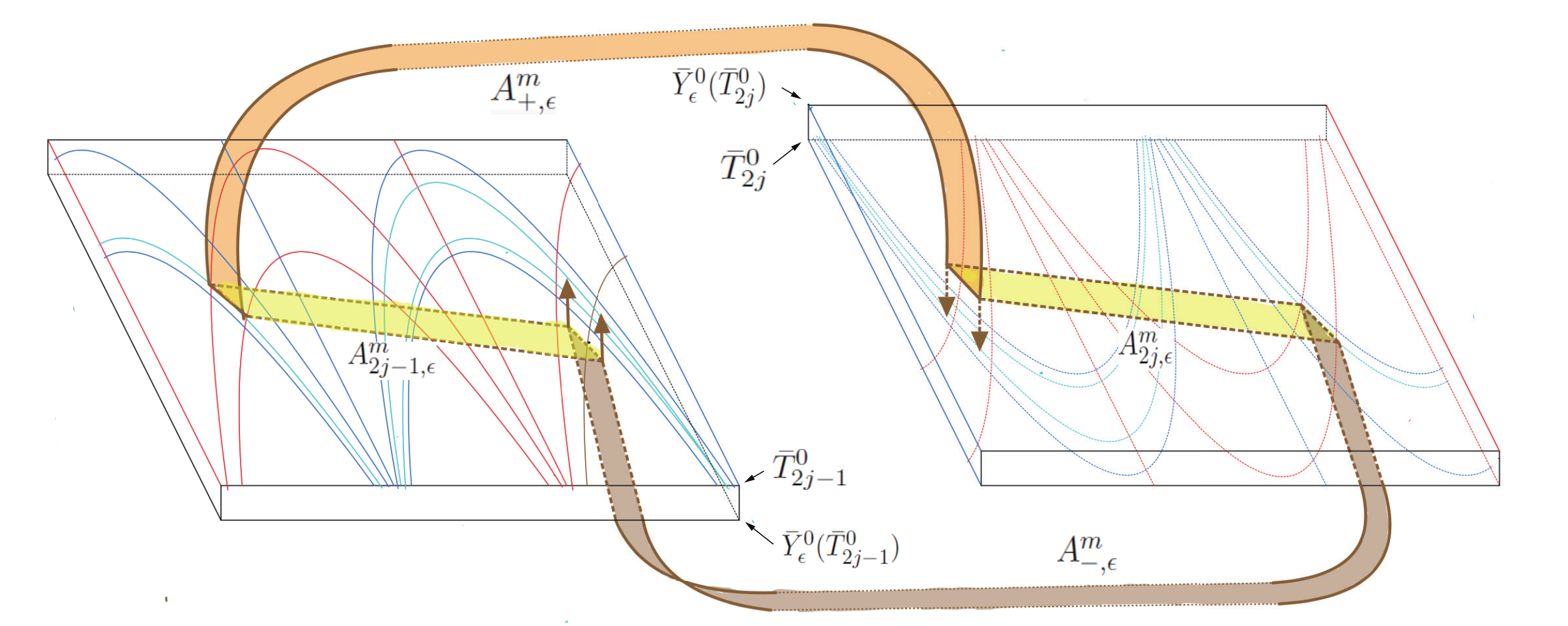}}
\vspace{0.5cm}
\centerline{\includegraphics[width=12cm]{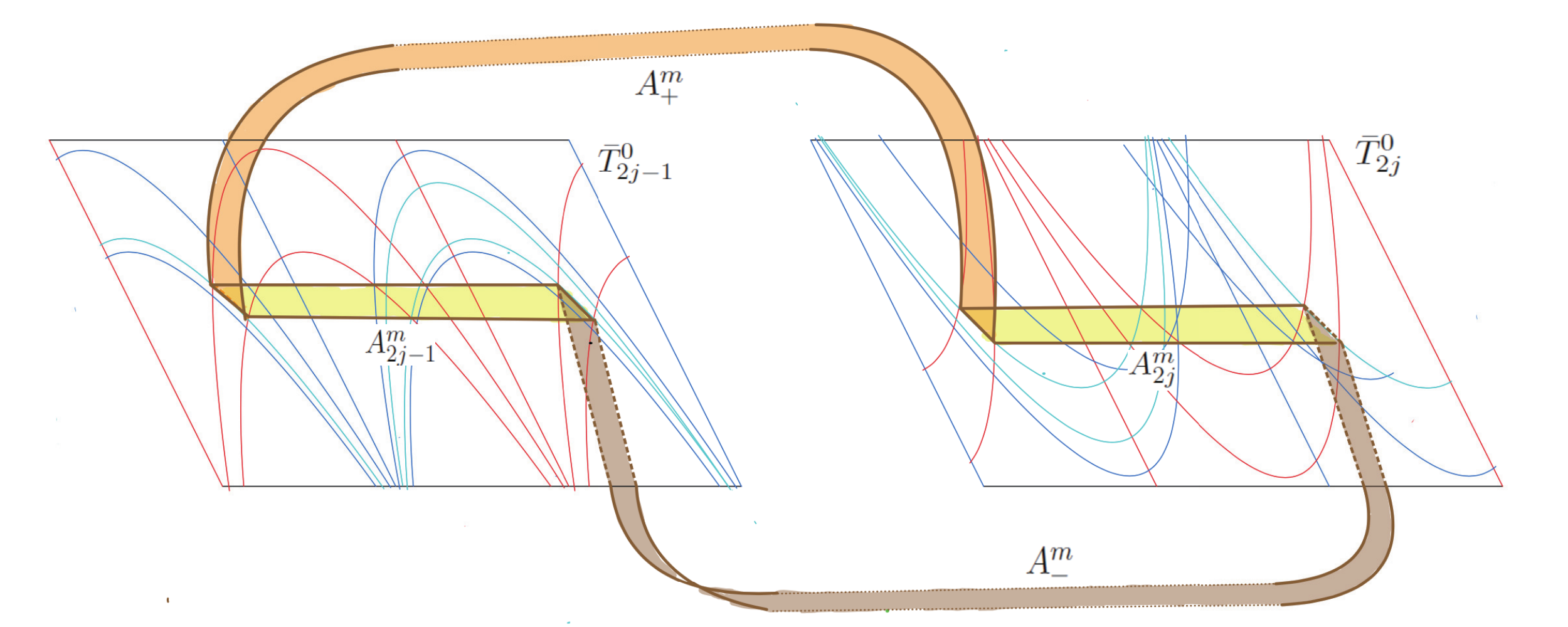}}
\caption{\label{f.first-isotopy-B}Isotopy from $A^m_1=\widebar h_1(W^s_{loc}(\widebar\alpha_j^m))$ to $A^m=\lim_{\epsilon\to 0} \widebar h_\epsilon(W^s_{loc}(\widebar\alpha_j^m))$.}
\end{figure}

\subsubsection{Step 5. Isotopy from $A^m=\lim_{\epsilon\to 0}\widebar h_\epsilon(W^s_{loc}(\widebar\alpha_j^m))$ to a ribbon $\widehat A^m$.}
In the preceding step, we have described a decomposition $A^m=A^m_{2j}\cup A^m_+\cup A^m_{2j-1}\cup A^m_-$ of the ribbon $A^m=\lim_{\epsilon\to 0}\widebar h_\epsilon(W^s_{loc}(\widebar\alpha_j^m))$. Now, we deform $A^m$ by isotopy as follows. 

The common end $\check\tau_{-1}(I^s(\widebar p_{2j}^m))$ of the ribbons $A^m_{2j}$ and $A^m_+$ is contained in the leaf $\ell^s(\check\tau_{-1}(\widebar p_{2j}^m))$ of the foliation $\widebar f^{s,0}_{2j}$. The leaf $\ell^s(\check\tau_{-1}(\widebar p_{2j}^m))$  goes from one end of the stable strip $\widebar\pi^0(D^s_{2j})$ to the other. We deform $A^m_{2j}$ and $A^m_+$ by ``dragging" their common end along the leaf $\ell^s(\check\tau_{-1}(\widebar p_{2j}^m))$, in order to bring it inside the same connected component of $\widebar\pi^0(D^s_{2j})\cap \widebar\pi^0(D^u_{2j})$ as the arc $I^s(\widebar p^0_{2j})$. See the upper part of Figure~\ref{f.isotopy-B}. This can be done by an isotopy supported in the collar neighborhood $V_{2j}^\epsilon$ of the torus $\widebar T_{2j}^0$, for an arbitrarily small $\epsilon$, which respects the  level sets  $\{t=\mbox{constant}\}$ of the $(x,y,t)$ coordinates system on $V_{2j}^\epsilon$ (see Step 1). Moreover, we can choose an isotopy during which $A^m_{2j}$ moves inside the torus $\widebar T_{2j}^0$, and $A^m_+$ moves inside the tunnel $E_j^+$ (recall that this tunnel is the forward orbit of the strip $\widebar\pi^0(D^s_{2j})$ in $\widebar\pi^0(U^+)$) and inside a leaf of the foliation $\widebar\cF^{s,0}$. At the end of the isotopy, we obtain two new ribbons $\widehat A^m_{2j}$ and $\widehat A^m_+$ depicted on the lower part of Figure~\ref{f.isotopy-B}. Adjusting the width of these ribbons if necessary, we may assume that their common end is a crossbar of the unstable strip $D^u_{2j}$. 

Notice that the ribbon $\widehat A^m_+$ is in good position inside the tunnel $E_j^+$. This is a consequence of three facts: first the initial ribbon $A^m_+$ is in good position since it is a neighborhood of an orbit segment in a leaf of $\widebar\cF^{s,0}$; second the ribbon remains in this leaf of  $\widebar\cF^{s,0}$ and in the tunnel $E_j^+$ all along the isotopy; third the isotopy is supported in the neighborhood $V_{2j}^\epsilon$ and respects the level sets $\{t=\mbox{constant}\}$ of the $(x,y,t)$ coordinates system and $\frac{\partial}{\partial t}$ corresponds to the vector field $(\widebar\pi^0)_*X=\widebar Y^0$.  

We deform similarly the ribbons $A^m_{2j-1}$ and $A^m_-$ in a small neighborhood of the torus $\widebar T_{2j-1}^0$, leading to new ribbons $\widehat A^m_{2j-1}$ and $\widehat A^m_-$ (see again Figure~\ref{f.isotopy-B}). By construction, the closed ribbons $A^m=A^m_{2j}\cup A^m_+\cup A^m_{2j-1}\cup A^m_-$  and $\widehat A^m=\widehat A^m_{2j}\cup \widehat A^m_+\cup \widehat A^m_{2j-1}\cup \widehat A^m_-$ are isotopic.

\begin{figure}[ht]
\centerline{\includegraphics[width=12cm]{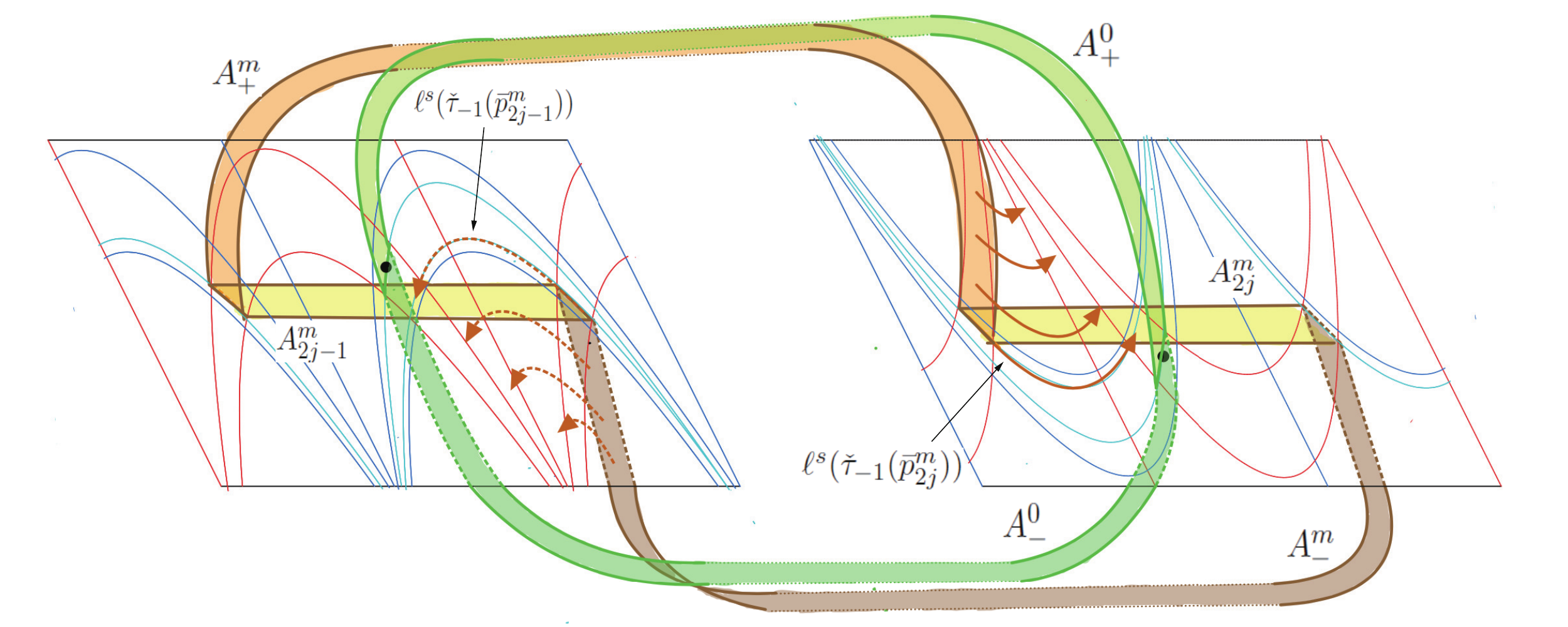}}
\vspace{0.5cm}
\centerline{\includegraphics[width=12cm]{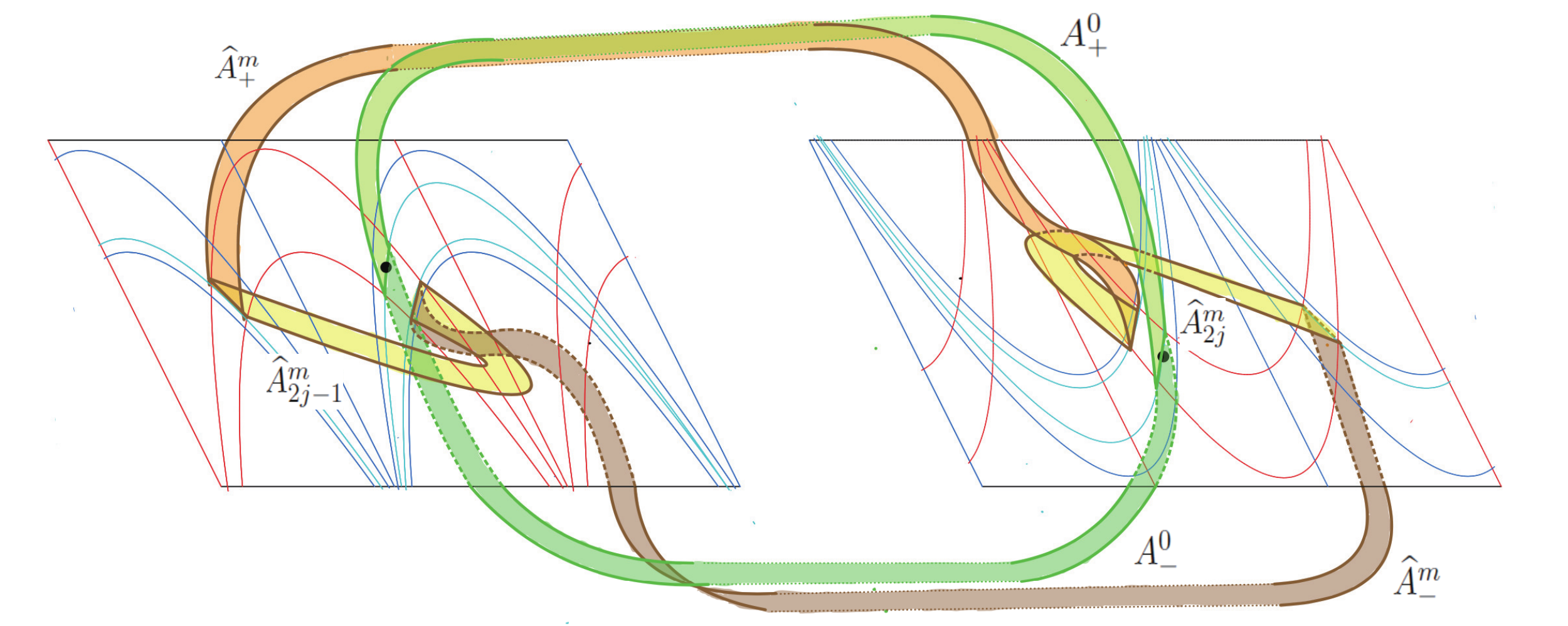}}
\caption{\label{f.isotopy-B}Isotopy from the closed ribbon $A^m=A^m_{2j}\cup A^m_+\cup A^m_{2j-1}\cup A^m_-$ to the closed ribbon $\widehat A^m=\widehat A^m_{2j}\cup \widehat A^m_+\cup \widehat A^m_{2j-1}\cup \widehat A^m_-$.}
\end{figure}

\subsubsection{Step 6. Isotopy from the closed ribbon $A^0=W^s_{loc}(\widebar\alpha_j^0)$ to a ribbon $\widehat A^0$.}
Now we will deform the closed ribbon $A^0=A^0_+\cup A^0_-$ into a closed ribbon $\widehat A^0=\widehat A^0_{2j}\cup \widehat A^0_+\cup \widehat A^0_{2j-1}\cup \widehat A^0_-$ satisfying the following properties:
\begin{itemize}
\item[--] $\widehat A^0_\pm$ is a ribbon in good position in the tunnel $\widebar\pi^0(E_j^\pm)$, with the same ends as the $\widehat A^m_\pm$;
\item[--] $\widehat A^0_{2j}$ is the section of the unstable strip $\widebar\pi^0(D^u_{2j})$ connecting the right end\footnote{We are considering ribbons with one end in the torus $\widebar T_{2j-1}^0$ and one end in the torus $\widebar T_{2j}^0$. For sake of simplicity, we call the former \emph{the left end} of the ribbon, and the later \emph{the the right end} of the ribbon (because, on our figures, $\widebar T_{2j-1}^0$ is on the left and $\widebar T_{2j}^0$ on the right).} of $\widehat A^m_-$ to the right end of  $\widehat A^m_+$ ;
\item[--] $\widehat A^0_{2j-1}$ is the section of the unstable strip $\widebar\pi^0(D^u_{2j-1})$ connecting the left end of $\widehat A^m_-$ to the left end of  $\widehat A^m_+$.
\end{itemize}
See the upper part of Figure~\ref{f.isotopy-A}. Let us describe more precisely the isotopy. Both the right end of $A^0_-$ and the right end of $\widehat A^m_-$ are stable crossbars of the unstable strip $\pi_{\widebar\varphi}^0(D^u_{2j})$. So we can ``drag" the right end of $A^0_-$ among cross bars of $D^u_{2j}$ to bring it up to the right end of $\widehat A^m_-$ (see Figure~\ref{f.isotopy-A}). This can be done by an isotopy supported in the intersection of the tunnel $E_j^-$ and a small collar neighborhood of $\widebar T^{2j}_0$ in $\widebar\pi^0(U^-)$ (recall that $E_j^+$ is the backward orbit of the unstable strip $\widebar\pi^0(D_{2j}^u)$ in $\widebar\pi^0(U^-)$). Moreover this can be done by an isotopy among ribbons that are in good position inside $E_j^-$(recall that, at the beginning of the isotopy, $A^0_-$ is a piece of local stable manifold, hence is in good position). Now, the right end of $A^0_+$ and the right end of $\widehat A^m_+$ are contained in the same connected component of $\widebar\pi^0(D_{2j}^u)\cap\widebar\pi^0(D_{2j}^s)$, and both of them are stable crossbars of the unstable strip $D_{2j}^u$. So we can drag the right end of $A^0_+$ inside  $\widebar\pi^0(D_{2j}^u)\cap \widebar\pi^0(D_{2j}^s)$, among crossbars of $\widebar\pi^0(D^u_{2j})$, to bring it up to the right end of $\widehat A^m_+$. This can be done by an isotopy among ribbons in good position inside the tunnel $\widebar\pi^0(E_j^+)$(recall that $\widebar\pi^0(E_j^+)$ is the forward orbit of the stable strip $\widebar\pi^0(D^s_{2j})$ in $\widebar\pi^0(U^+)$). At this stage, we have explained how to move the right ends of $A^0_-$ and $A^0_+$ up to the right ends of $\widehat A^m_-$ and $\widehat A^m_+$, by isotopies of $A^0_-$ and $A^0_+$. But such isotopies move the right ends of $A^0_-$ and $A^0_+$ appart from each other, and therefore create a ribbon connecting these two ends. Obviously, at the end of the isotopy, this ribbon is nothing but the segment of the unstable strip $D^u_{2j}$ connecting the right ends of $\widehat A^m_-$ and $\widehat A^m_+$; we call it $\widehat A^m_{2j}$. We perform a similar operation in a neighborhood the torus $\widebar T_{2j-1}^0$ to bring the lefts ends of $A^0_-$ and $A^0_+$ towards the ends of $\widehat A^m_-$ and $\widehat A^m_+$. At the end of the day, we obtain that the closed ribbon $A^0=W^s_{loc}(\widebar\alpha_j^0)$ is isotopic to a closed ribbon $\widehat A^0=\widehat A^0_{2j}\cup \widehat A^0_+\cup \widehat A^0_{2j-1}\cup \widehat A^0_-$ where $\widehat A^0_\pm$, $\widehat A^0_{2j-1}$ and $\widehat A^0_{2j}$ satisfy the properties announced above (see the lower part of Figure~\ref{f.isotopy-A}).

\begin{figure}[ht]
\centerline{\includegraphics[width=12cm]{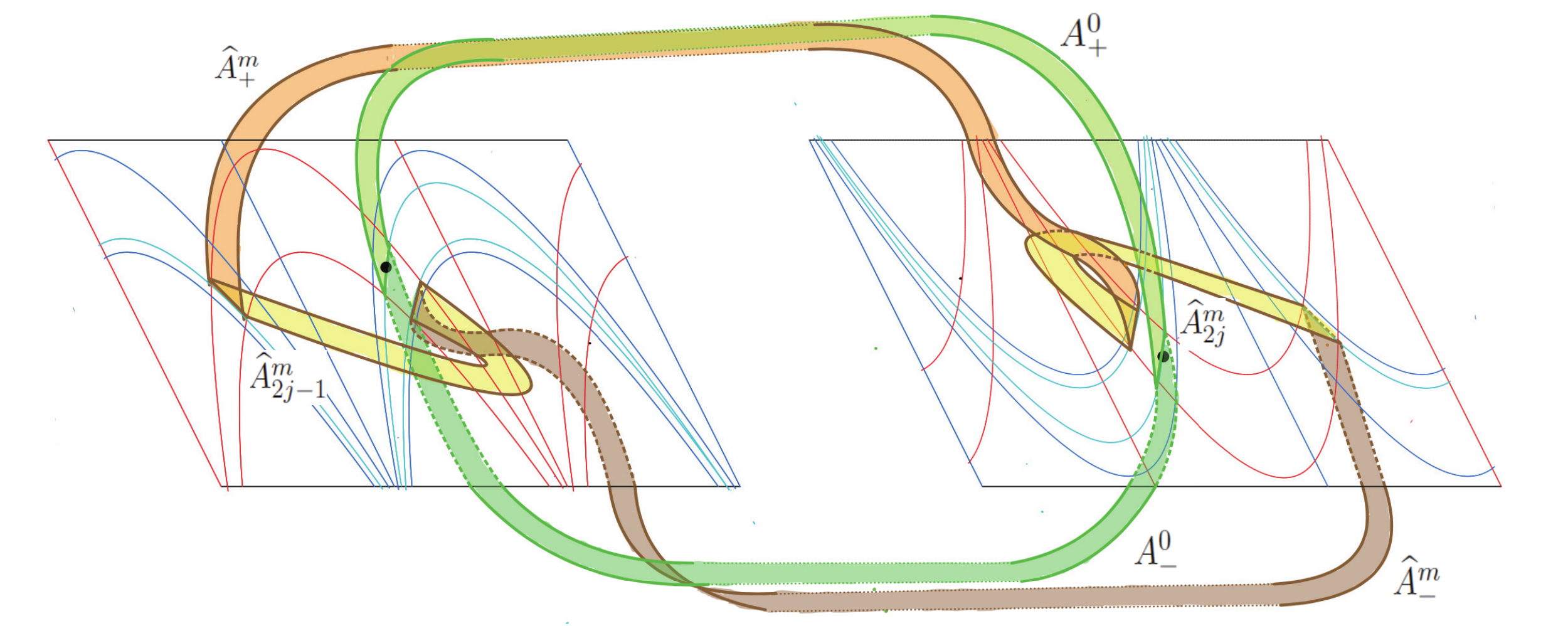}}
\vspace{0.5cm}
\centerline{\includegraphics[width=12cm]{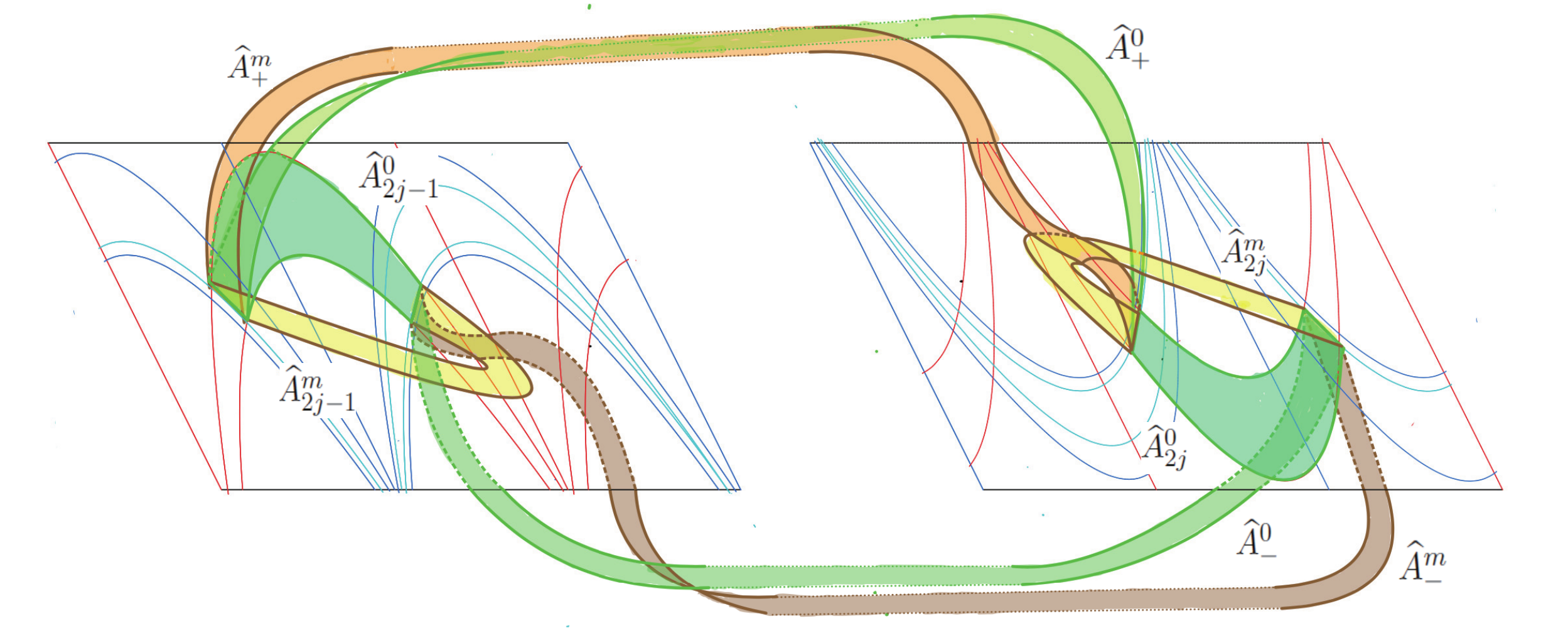}}
\caption{\label{f.isotopy-A}Isotopy between $A^0=A^0_+\cup A^0_-$ and $\widehat A^0=\widehat A^0_{2j}\cup \widehat A^0_+\cup \widehat A^0_{2j-1}\cup \widehat A^0_-$.}
\end{figure}

\subsubsection{Step 7. Isotopy between the closed ribbons $\widehat A^0$ and $\widehat A^m$.}
We will now prove that the ribbons $\widehat A^0=\widehat A^0_{2j}\cup \widehat A^0_+\cup \widehat A^0_{2j-1}\cup \widehat A^0_-$ and $\widehat A^m=\widehat A^m_{2j}\cup \widehat A^m_+\cup \widehat A^m_{2j-1}\cup \widehat A^m_-$ are isotopic. Recall that $\widehat A^0_+$ and $\widehat A^m_+$ are two ribbons with the same ends. Both $\widehat A^0_+$ and $\widehat A^m_+$ are contained in the tunnel $\widebar\pi^0(E_j^+)$, and are in good position inside this tunnel. By the observation at the end of our preliminaries, it follows that $\widehat A^0_+$ and $\widehat A^m_+$ are isotopic (with fixed ends). By a similar argument, the ribbons $\widehat A^0_-$ and $\widehat A^m_-$ are isotopic with fixed ends. 

So we are left to compare the closed ribbons $\widehat A^0=\widehat A^0_{2j}\cup \widehat A^0_+\cup \widehat A^0_{2j-1}\cup \widehat A^0_-$  and $\widehat A^m_{2j}\cup \widehat A^0_+\cup \widehat A^m_{2j-1}\cup \widehat A^0_-$. This is equivalent to understand what happens when we replace $\widehat A^0_{2j}$ by $\widehat A^m_{2j}$ and replace  $\widehat A^0_{2j-1}$ by $\widehat A^m_{2j-1}$ in the closed ribbon $\widehat A^0$. 

\begin{claim}
\label{c.core-homoopic-to-zero}
The core of the closed ribbon $\widehat A^0_{2j}\cup \widehat A^m_{2j}$ is homotopic to $0$ in the torus $T_{2j}^{in}$. 
\end{claim}

\begin{proof}
We need to recall that the construction of the orbits $\widebar\alpha_{j}^0$ and $\widebar\alpha_j^m$ included the choice of some connected components $\widebar R_{2j}$ and $\widebar L_{2j}$ of $\widebar\varphi^0(D^u_{2j})\cap D^s_{2j}$ and $\widebar\varphi^m(D^u_{2j})\cap D^s_{2j}$ respectively. Recall $\widebar q^0_{2j}\in \widebar\pi^0(\widebar R_{2j})$ and $\widebar q^m_{2j}\in \widebar\pi^m(\widebar L_{2j})$, which implies that $\widebar p^m_{2j}\in \check\tau_{-1}(\widebar\pi^0(\widebar L_{2j}))$. Also recall that the connected components $\widebar R_{2j}$ and $\widebar L_{2j}$ were chosen so that $\widebar L_{2j}=\vartheta(\widebar R_{2j})$ where $\vartheta$ is the symmetry with respect to the core of the annulus $A_{2j}^{1,s}$. 

Performing the isotopy of step 6 backwards, we see that the core of the closed ribbon $\widehat A^0_{2j}\cup \widehat A^m_{2j}$ is homotopic to the concatenation of three arcs: 
\begin{itemize}
\item[--] an arc $a$ joining $\check\tau_{-1}(\widebar p^m_{2j})\in\widebar\pi^0(\widebar L_{2j})$ to a point in $\widebar\pi^0(\widebar R_{2j})$ inside the strip $\widebar\pi^0(D^s_{2j})$,
\item[--] an arc $b$ joining the end of $a$ in $\widebar\pi^0(\widebar R_{2j})$ to $\widebar p^m_{2j}\in\check\tau_{+1}(\widebar\pi^0(\widebar L_{2j}))$ inside the strip $\widebar\pi^0(D^s_{2j})$,
\item[--] a straight horizontal\footnote{By ``horizontal", we mean along which the $y$-coordinate is constant.} segment $c$ joining  $\widebar p^m_{2j}\in\check\tau_{+1}(\widebar\pi^0(\widebar L_{2j}))$ to $\check\tau_{-1}(\widebar p^m_{2j})\in\widebar\pi^0(\widebar L_{2j})$.
\end{itemize}  
See Figure~\ref{f.core-homotopic-to-0}. The axial symmetry $\vartheta$ maps $\widebar L_{2j}$ to $\widebar R_{2j}$ and leaves the strip $D_{2j}^s$ invariant.  Observe that $\tau_{+1}\circ\vartheta$ is the symmetry with respect to the axis of the annulus $\tau_{+1/2}(A^{s,1}_{2j})=\widebar\varphi^0(A^{u,1}_{2j})$. Hence $\tau_{+1}\circ\vartheta$ maps $\widebar R_{2j}$ to $\tau^{-1}(\widebar L_{2j})$ and leaves the strip $\widebar\varphi^0(D_{2j}^u)$ invariant. As a consequence, the arcs $a$ and $b$ can be chosen so that $a$ is globally invariant under the axial symmetry $\check\vartheta:=\widebar\pi^0\circ\vartheta\circ(\widebar\pi^0_{T^{in}_{2j}})^{-1}$ and  $b$ is globally invariant under the axial symmetry $\check \tau_{+1}\circ\check\vartheta$. Now observe that, if an arc is invariant under an axial symmetry with vertical axis, then the total variation of the $y$-coordinate along this arc is equal to $0$. Hence the total variation of $y$-coordinate along the concatenaion of $a$ and $b$ is equal to $0$. This means that the concatenation of $a$ and $b$ is homotopic (with fixed ends) to the horizontal segment $c$. Equivalently,  the core of the closed ribbon $\widehat A^0_{2j}\cup \widehat A^m_{2j}$ is homotopic to $0$.
\end{proof}

\begin{figure}[ht]
\centerline{\includegraphics[width=6.5cm]{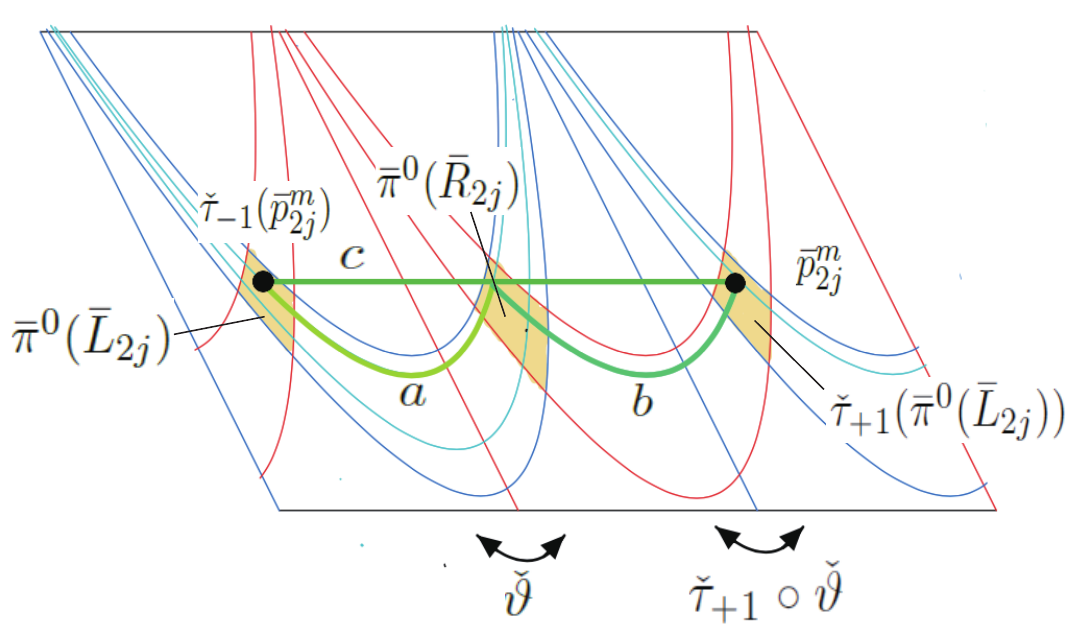}}
\caption{\label{f.core-homotopic-to-0}The arcs $a$, $b$ and $c$.}
\end{figure}

Of course, we may prove similarly that the core of the closed ribbon $\widehat A^0_{2j-1}\cup \widehat A^m_{2j-1}$ is homotopic to $0$ in the torus $T_{2j-1}^{in}$. It follows that the cores of the closed ribbons $\widehat A^0=\widehat A^0_{2j}\cup \widehat A^0_+\cup \widehat A^0_{2j-1}\cup \widehat A^0_-$ and $\widehat A^m_{2j}\cup \widehat A^0_+\cup \widehat A^m_{2j-1}\cup \widehat A^0_-$ are homotopic. As a consequence, these two closed ribbons differ by a certain number of half-twists. We have to show that number is equal to $0$. 

For this purpose, let us recall that our figures depict faithfully the situation close to $\widebar T_{2j-1}^0$ and $\widebar T_{2j}^0$, since we have a coordinates system where the flow and the foliations are very simple and fully explicit (see Remarks~\ref{r.faithful} and~\ref{r.faithful-2}). Observing carefully Figure~\ref{f.half-twists}, we see replacing $\widehat A^0_{2j-1}$ by $\widehat A^m_{2j-1}$ amounts to a left-handed half-twist, whereas replacing $\widehat A^0_{2j}$ by $\widehat A^m_{2j}$ amounts to a right-handed half-twist. These two half-twists cancel each other out, hence $\widehat A^0=\widehat A^0_{2j}\cup \widehat A^0_+\cup \widehat A^0_{2j-1}\cup \widehat A^0_-$ is isotopic to $\widehat A^m_{2j}\cup \widehat A^0_+\cup \widehat A^m_{2j-1}\cup \widehat A^0_-$. 

We conclude that the ribbons $\widehat A^0=\widehat A^0_{2j}\cup \widehat A^0_+\cup \widehat A^0_{2j-1}\cup \widehat A^0_-$ and $\widehat A^m=\widehat A^m_{2j}\cup \widehat A^m_+\cup \widehat A^m_{2j-1}\cup \widehat A^m_-$ are isotopic.

\begin{figure}[ht]
\centerline{\includegraphics[width=12cm]{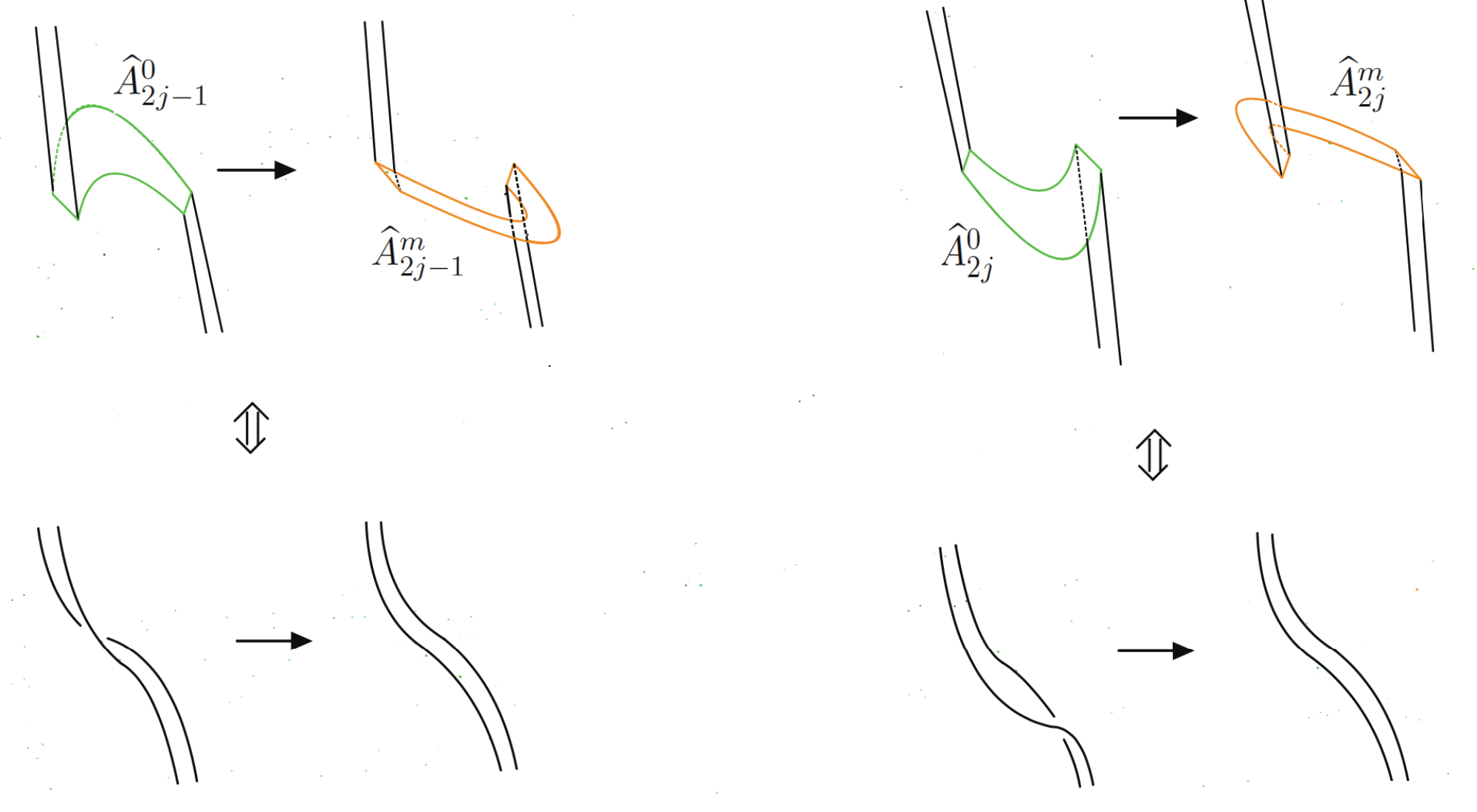}}
\caption{\label{f.half-twists}Replacing $\widehat A^0_{2j-1}$ by $\widehat A^m_{2j-1}$ (resp. $\widehat A^0_{2j}$ by $\widehat A^m_{2j}$) amounts to a left-handed (resp. right-handed) half-twist. }
\end{figure}

\subsubsection{End of the proof of Proposition~\ref{p.h1}.}
Putting together steps 4 to 7, we conclude that the closed ribbons  $A^0=W^s_{loc}(\widebar\alpha_j^0)$ and $A^m=\widebar h_1(W^s_{loc}(\widebar\alpha_j^m))$ are isotopic for $j\leq 2m$. Observe that the isotopy we have described is supported in the union of the tunnel $E_j^+$, the tunnel $E_j^-$, a small tubular neighborhood of the torus $\widebar T_{2j-1}^0$ and a small tubular neighborhood of the torus $\widebar T_{2j}^0$. In particular, the isotopies corresponding to the different integers $j\in\{1,\dots,2m\}$ have disjoint supports. Also recall that the closed ribbons  $W^s_{loc}(\widebar\alpha_j^0)$ and $\widebar h_1(W^s_{loc}(\widebar\alpha_j^m))$ are equal for $j\geq 2m$. We conclude that the ribbon links $W^s_{loc}(\widebar\alpha_0^0)\cup\dots\cup W^s_{loc}(\widebar\alpha_{2n}^0)$ and $\widebar h_1(W^s_{loc}(\widebar\alpha_0^m))\cup\dots\cup\widebar h_1(W^s_{loc}(\widebar\alpha_{2n}^m))$ are isotopic and Proposition~\ref{p.h1} is proved. This concludes the proof of Theorem~\ref{t.con}.

\section{Constructing non-$\RR$-covered Anosov flows on a hyperbolic manifold}
\label{s.conh}
In the previous section, for $m=0,\dots,2n$, we have constructed a closed toroidal $3$-manifold $W^m$, a transitive Anosov flow $Y^m_t$ on $W^m$, and $2n$ periodic orbits $\alpha_1^m,\dots,\alpha_{2n}^m$ of the Anosov flow $Y^m_t$. Using Dehn-Fried surgeries, we will now turn $W^m$ into a hyperbolic manifold, and turn $Y^m_t$ into a non-$\RR$-covered Anosov flow on this hyperbolic manifold. 

 Let $V(\alpha_j^m)$ be a small tubular neighborhood of the periodic orbit $\alpha_j^m$. We define an oriented meridian $\mu_j^m$ and an oriented longitude $\lambda_j^m$ of the torus 
$\partial V(\alpha_j^m)$ as follows. The tubular neighbourhood $V(\alpha_j^m)$ can be deformed so that so that $V(\alpha_j^m)\cap T_{2j}^m$ is a single small disc $d_{2j}^m$ 
and 
$W_{loc}^s(\alpha_j^m)\cap \partial V(\alpha_j^m)$ is made of two parallel closed curves, each of which is homotopic to the orbit $\alpha_j^m$ in $W^s_{loc}(\alpha_j^m)$. The orientation of $W^m$ induces an orientation on the transverse disc $d_{2j}^m$ (for which the basis $(\vec{e}_1,\vec{e}_2)$ defined in subsection~\ref{ss.orientation} is a direct basis). We set $\mu_j^m$ to be the oriented boundary of the small disc $d_{2j}^m$. We set $\lambda_j^m$ to be one of the two connected components of 
$W_{loc}^s(\alpha_j^m)\cap \partial V(\alpha_j^m)$, equipped with the orientation induced by the dynamical of the orbit $\alpha_j^m$. See Figure~\ref{f.ortWm}. One may easily check that the homology classes of $\mu_j^m$ and $\lambda_j^m$ in $H_1(\partial V(\alpha_j^m))$ do not depend on the choices in the above definition. Also observe that the homology class of $\mu_j^m$ would be unchanged if one  replaces the torus $T_{2j}^m$ by the torus $T_{2j-1}^m$ in the definition.

 \begin{figure}[htp]
\begin{center}
  \includegraphics[totalheight=3.8cm]{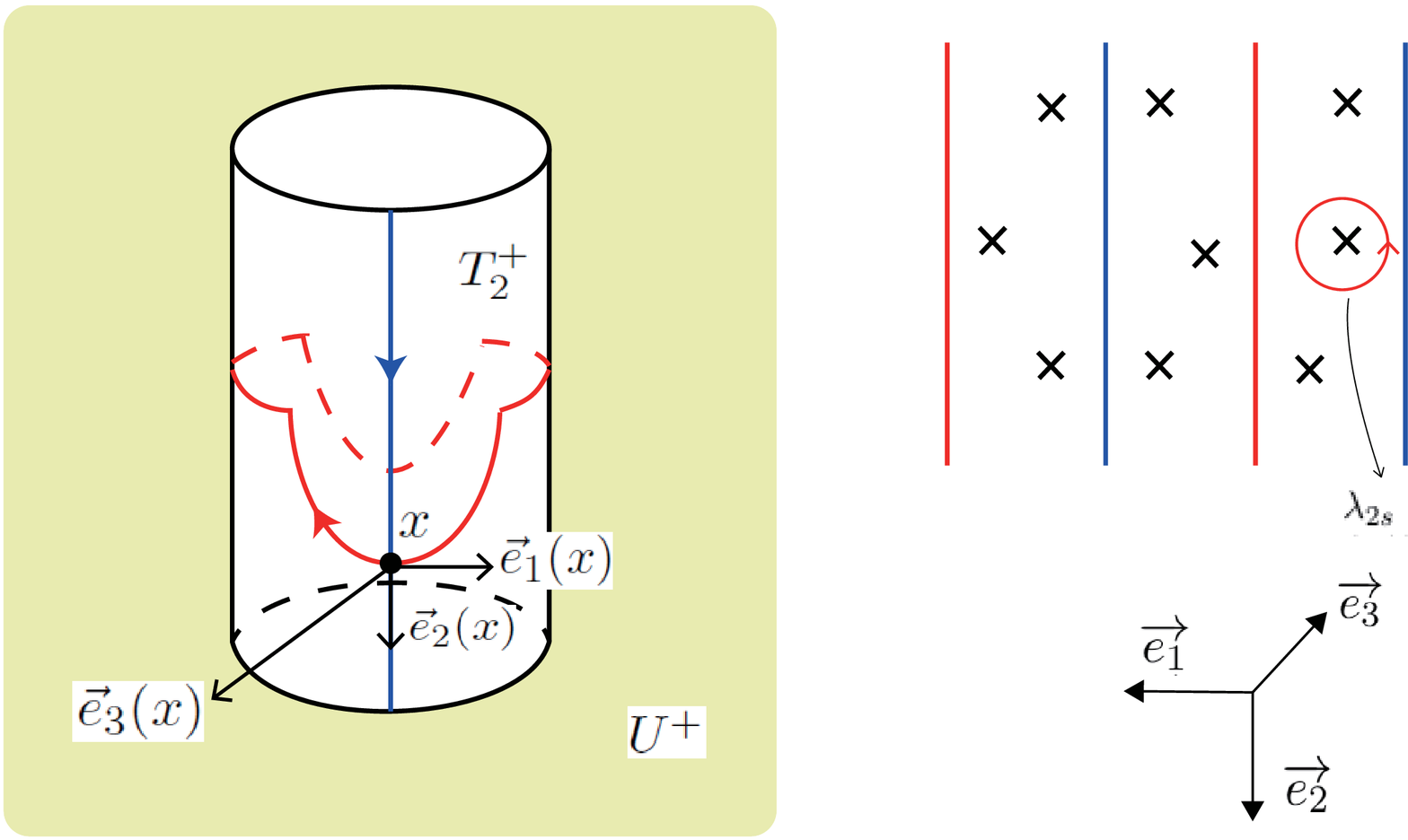}\\
  \caption{The oriented meridian $\mu_j^m$ and the oriented longitude $\lambda_j^m$}\label{f.ortWm}
\end{center}
\end{figure}

Performing a Dehn-Fried surgery on the orbit $\alpha^m_j$ consists in blowing-up and blowing-down $\alpha^m_j$ in order to ``change the meridian circle" (we refer to~\cite{Sha} for a precise definition). Given an integer $k\in\ZZ$, we call a \emph{$k$-Dehn-Fried surgery} on the orbit $\alpha^m_j$ a Dehn-surgery so that $\mu_{j}^m+ k \lambda_{j}^m$ will be a meridian circle after surgery.

\begin{definition}
Fix an integer $k\in\ZZ$. Starting from the manifold $W^m$ and the Anosov flow $Y^m_t$ constructed in section~\ref{s.con}, we perform a $k$-Dehn-Fried surgery on each of the $2n$ orbits $\alpha_1^m,\dots,\alpha_{2n}^m$. We will denote by $M^m$ the resulting $3$-manifold and by $Z^m_t$ the resulting Anosov flow\footnote{More precisely, the flow resulting of such Dehn-surgeries is a topological Anosov flow, well-defined up to orbital equivalence, and the work of Shannon (\cite{Sha}) ensures that this flow is actually orbitally equivalent to a true Anosov flow.}. 
\end{definition}

\begin{remark}
We omit to report the dependence in $k$ of the manifold $M^m$ and the Anosov flow $Z^m_t$ since this integer is fixed once forever.
\end{remark}

\begin{proposition}\label{p.homeomorphic-manifolds}
 For any $m_1,m_2\in\{0,1,\dots, 2n\}$, the manifolds $M^{m_1}$ and $M^{m_2}$ are homeomorphic. 
 \end{proposition}

%

\begin{proof}
 Let $m_1,m_2\in\{0,1,\dots, 2n\}$. For $i=1,2$, the manifold $M^{m_i}$ is obtained (up to homeomorphism) by considering the manifold $W^{m_i}$, by removing small open tubular neighbourhood $V(\alpha^{m_i}_j)$ of the periodic orbit $\alpha^{m_i}_j$ for $j=1,\dots,2n$, by considering a circle fibration of the two-torus $\partial V(\alpha^{m_i}_j)$ whose fibers is homologous to $\mu_{j}^{m_i}+ k \lambda_{j}^{m_i}$ for $j=1,\dots,2n$, and by squeezing each fiber of this fibration to a single point.  According to item~(3) of Theorem~\ref{t.con}, there exists a homeomorphism $H:W^{m_1}\to W^{m_2}$ which maps the local stable manifold $W_{loc}^s(\alpha^{m_1}_j)$ of the orbit $\alpha^{m_1}_j$ to the  local stable manifold $W_{loc}^s(\alpha^{m_2}_j)$ of the orbit $\alpha^{m_2}_j$ for every $j\in\{1,\dots,2n\}$. It follows that $H$ maps a tubular neighborhood $V(\alpha^{m_1}_j)$ of $\alpha^{m_1}_j$ to a tubular neighborhood $V(\alpha^{m_2}_j)$ of $\alpha^{m_2}_j$, and maps a circle homologous to $\mu_{j}^{m_1}+ k \lambda_{j}^{m_1}$ to a circle homologous to $\mu_{j}^{m_2}+ k\lambda_{j}^{m_2}$. Hence $H$ induces a homeomorphism from $M^{m_1}$ to $M^{m_2}$. 
\end{proof}

\begin{proposition}\label{p.hyperbolic}
If $|k|$ was chosen large enough, then the manifold $M^m$ is hyperbolic.
\end{proposition}

In order to prove this proposition, it is convenient to introduce the following notion: a link $\Gamma$ in a closed orientable $3$-manifold $W$ is said to be \emph{full-filling} if it intersects every incompressible torus in $W$ up to isotopy. The following lemma is an analogy of a similar result for geodesic flows proved in Appendix B of \cite{FH}: 

\begin{lemma}\label{l.hypknot}
Let $Y_t$ be an Anosov flow on a closed orientable $3$-manifold~$W$, and $\Gamma$ be a link made of finitely many periodic orbits of $Y_t$. Assume that  $\Gamma$ is full-filling, and that every periodic orbit $\gamma \in \Gamma$ intersects a two-torus $T_{\gamma}\subset M$ which is transverse to $Y_t$. Then $W_\Gamma$ is a hyperbolic manifold, where $W_\Gamma$ is the path closure of $W\setminus\Gamma$.
\end{lemma}

 Let $\Gamma$ be the union of finitely many periodic orbits  of an Anosov flow $Y_t$ on a $3$-manifold $W$. Only the condition that
 $\Gamma$ is full-filling can not guarantee that $\Gamma$ is a hyperbolic link in $W$. For instance, if there are two periodic orbits $\gamma_1, \gamma_2 \in \Gamma$ so that they bound an embedded annulus \footnote{This type of examples can be found in \cite{BarFe2} and \cite{BBY}.}, then we can easily construct a
torus $T$ in the exterior of $\Gamma$ (in $W$) that bounds a compact $3$-manifold with trivial pants-fibration structure.
Therefore, in this case $W\setminus\Gamma$ is not hyperbolic. But under the condition of 
 Lemma \ref{l.hypknot}, there is an important property (Proposition \ref{p.homotopyclass}) about the free homotopy classes of the periodic orbits to ensure that the above phenomena does not happen.
This property is actually an application of the topology theory of $3$-dimensional  Anosov flows built by Barbot
 (\cite{Ba}, \cite{Ba2}, \cite{Ba3}, etc) and Fenley (\cite{Fen1}, \cite{Fen2}, etc). We start with the following lemma, which is a direct consequence of Lemma 3.32 of \cite{Bart} that relies on the theory established by Barbot and Fenley.
\begin{lemma}\label{l.loz}
Let $\alpha$ and $\beta$ be two periodic orbits of an Anosov flow $Y_t$ on a closed $3$-manifold $W$ such that $\alpha$ and $\beta$ are freely homotopic.  Then there  exists
a periodic orbit $\gamma$ of $Y_t$ so that $\alpha^i$ and $\gamma^j$ are freely homotopic for some
 $i\in\{1,2\}$ and $j\in \{-2,-1\}$. 
\end{lemma}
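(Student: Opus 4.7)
The plan is to read $\gamma$ directly off a chain of lozenges in the orbit space of $Y_t$, using the Barbot--Fenley correspondence recalled in Subsection~\ref{ss.Osp}. The three ingredients are: (i) free homotopy produces a non-trivial chain of lozenges, (ii) the opposite corner of any lozenge containing the lift $\widetilde\alpha$ is itself a lift of a periodic orbit, and (iii) the two stabilizers are related by an inverse up to squaring.

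For (i) and (ii): since $\alpha$ and $\beta$ are freely homotopic, the Barbot--Fenley statement recalled in Subsection~\ref{ss.Osp} produces coherent lifts $\widetilde\alpha,\widetilde\beta\in\widetilde W$ that sit at two corners of a chain of lozenges. In particular $\widetilde{FH}(\widetilde\alpha)$ contains at least one lozenge $L$ having $\widetilde\alpha$ as an ideal corner; let $\widetilde\gamma$ denote the other ideal corner of $L$. The corner $\widetilde\alpha$ is fixed by the primitive element $g_\alpha\in\pi_1(W)$ associated to $\alpha$. Because the two half-stable and two half-unstable edges of $L$ are canonically determined by its pair of corners, the subgroup of $\mathrm{Stab}(\widetilde\alpha)$ preserving both separatrix pairs at $\widetilde\alpha$ (namely $\langle g_\alpha\rangle$ or $\langle g_\alpha^2\rangle$, the latter when a local orientation flip is present) preserves $L$, and so fixes $\widetilde\gamma$ as well. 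Hence $\widetilde\gamma$ projects to a periodic orbit $\gamma$ of $Y_t$; this is precisely the content of Lemma~3.32 of \cite{Bart}.

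For (iii), the two stable separatrices forming the stable edges of $L$ are \emph{outgoing} from $\widetilde\alpha$ but \emph{incoming} at $\widetilde\gamma$, so the action of $\mathrm{Stab}(\widetilde\gamma)$ on $L$ contracts where $\mathrm{Stab}(\widetilde\alpha)$ expands, and conversely. Translating to group elements, if $g_\gamma$ generates $\mathrm{Stab}(\widetilde\gamma)$, then $g_\alpha^i=g_\gamma^{j}$ for some $i\in\{1,2\}$ and $j\in\{-2,-1\}$, where the exponents $2$ intervene precisely when the weak stable or unstable foliation fails to be transversely co-orientable along the corresponding orbit. This equality of deck transformations is exactly the desired free homotopy between $\alpha^i$ and $\gamma^j$.

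The main obstacle is step~(ii): verifying that the stabilizer of a periodic corner of a chain of lozenges must actually preserve each adjacent lozenge, so that the opposite corner inherits a non-trivial stabilizer and thereby projects to a closed orbit. This is the substantive content of Lemma~3.32 of \cite{Bart}, and rests on the rigidity of the bi-foliated product structure on $L$; once granted, the free homotopy claim with the prescribed exponent ranges follows immediately from the Barbot--Fenley dictionary between corners of lozenges and free-homotopy classes.
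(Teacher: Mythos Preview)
Your proposal is correct and follows essentially the same route as the paper's own sketch: obtain a chain of lozenges from the free homotopy via Barbot--Fenley, pick a lozenge $L$ with $\widetilde\alpha$ as a corner, and invoke Lemma~3.32 of \cite{Bart} to conclude that the opposite corner projects to a periodic orbit $\gamma$ with the stated free-homotopy relation. Your steps (ii) and (iii) merely unpack the stabilizer-and-orientation mechanism behind that lemma, which the paper leaves as a black-box citation; this is added detail rather than a different argument.
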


The following proposition will be very useful to prove Lemma \ref{l.hypknot}.
\begin{proposition}\label{p.homotopyclass}
Let  $\alpha$ and $\beta$ be two periodic orbits of an Anosov flow $Y_t$ on a closed $3$-manifold $W$ with transverse torus such that at least one of $\alpha$ and $\beta$
transversely intersects  a transverse torus $T$ of $Y_t$. Then, for every two nonzero integers $p$ and $q$, $\alpha^p$ and $\beta^q$ are not freely homotopic. In particular, $\alpha$ and $\beta$ are not freely homotopic.
\end{proposition}
\begin{proof}
We assume by contradiction that  there exist two nonzero integers $p$ and $q$ such that $\alpha^p$ and $\beta^q$ are freely homotopic.
Without loss of generality, suppose $\alpha\cap T\neq\emptyset$.
Let $p_0$ (resp. $q_0$) be the (geometrical) intersection numbers of $\alpha$ 
(resp.   $\beta$) and
$T$. By the assumption  that $\alpha^p$ and $\beta^q$ are freely homotopic and the fact that $T$ is transverse to $Y_t$ everywhere, then $p_0 q_0>0$ and $pp_0= q q_0$ (named $m$). Then, without loss of generality, we can  assume that each of $p_0$, $q_0$, $p$ and $q$ is larger than $0$.

Now we build a $m$-cyclic cover   $W_m$  of $W$ by cutting $T$ and gluing $m$ cutting pieces together. Let $\Wi{Y}_t$ be the lifted Anosov flow on $W_m$ and  $\Wi{T}$ be a lift
of $T$ that is transverse to  $\Wi Y_t$.
Under this cover, $\alpha$ is lifted to $p_0$ periodic orbits $\Wi\alpha_1, \dots, \Wi\alpha_{p_0}$ and $\beta$ is lifted to $q_0$ periodic orbits
$\Wi\beta_1, \dots, \Wi\beta_{q_0}$. We can further assume that $\Wi\alpha_1$ positively intersects to $\Wi T$.
Since $\alpha^p$ is freely homotopic to $\beta^q$, then by homotopy lifting property, $\Wi\alpha_1$ is freely homotopic to  $\Wi\beta_k$ for some $k\in\{1,\dots,q_0\}$. By Lemma \ref{l.loz}, there exists a periodic orbit $\Wi \gamma$ of $\Wi Y_t$ and some
 $i\in\{1,2\}$ and $j\in \{-2,-1\}$ such that $(\Wi\alpha_1)^i$ and $(\Wi \gamma)^j$  are freely homotopic.
 Recall that $\Wi\alpha_1$ positively intersects to $\Wi T$ that is a transverse torus of $\Wi Y_t$, then $\Wi \gamma$  also positively intersects to $\Wi T$, which conflicts to the fact that $(\Wi\alpha_1)^i$ and $(\Wi \gamma)^j$  ($i\in\{1,2\}, j\in \{-2,-1\}$) are freely homotopic. 
Then the conclusion of the proposition follows.
\end{proof}

Now we can  finish the proof of  Lemma \ref{l.hypknot} by using  some  techniques similar to Appendix B of \cite{FH}.

\begin{proof} [Proof of Lemma \ref{l.hypknot}]
First, we show that $W_\Gamma$ is irreducible. Suppose that $S$ is an embedded 2-sphere in $W_\Gamma$. Then $S\subset W$, since $W$ is irreducible, then $S$ bounds a $3$-ball $B_1$ in $W$. Either $\Gamma$ is disjoint with $B_1$, or there exists a sub-periodic orbits set $\Gamma_0\subset \Gamma$ so that $\Gamma_0\subset B_1$. If $\Gamma$ is disjoint with $B_1$, then $S$ bounds the $3$-ball $B_1 \subset W_\Gamma$. If $\Gamma_0$ is in $B_1$, then every periodic orbit $\gamma\subset \Gamma_0$ is contractable in $W$, this is impossible since $\gamma$ is a periodic orbit of an Anosov flow.

Now we show that $W_\Gamma$ is atoroidal.
 Let $T$ be an incompressible torus in $W_\Gamma$. Note that $\Gamma$ geometrically
intersects  every incompressible torus in $W$, therefore $T$ should be compressible in $W$.
Since $T$ is compressible in $W$ and $W$ is an irreducible $3$-manifold,
either $T$ is in a $3$-ball $B$ of $W$ or $T$ bounds a solid torus $V$ in $W$ which contains  $\Gamma_0$, that is a sub-periodic orbits set of $\Gamma$. 

Now we prove that the first case can not happen.
 Let $\Wi{W}$ be the universal cover of $W$ and $\pi: \Wi{W} \to W$ be the covering map.
Assume by contradiction that $T$ is embedded into a $3$-ball $B$ of $W$, $T$ lifts to a connected component $\Wi{T}$ in $\Wi{W}$ so that $\pi |_{\Wi{T}}: \Wi{T} \to T$ is a homeomorphism map. Since $T$ is incompressible in $W_\Gamma$, $T$ is incompressible in $W\setminus\Gamma$ and $\Wi{T}$ is incompressible
in  $\Wi{W} \setminus \Wi{\Gamma}$ that is a covering space of $W\setminus\Gamma$. Here $\Wi{\Gamma}$
is the  preimage of $\Gamma$ under $\pi$. This is impossible due to the following reasons.  Note that $\Wi W $ is a principal $\mathbb{R}$-bundle over $O$ by the orbits of $\Wi Y_t$ (\cite{Fen1}), where $O$ is the orbit space of $\Wi Y_t$ which is homeomorphic
to $\mathbb{R}^2$ and $\Wi{Y_t}$ is the lifting flow of $Y_t$. Therefore, $\Wi{W} \setminus \Wi{\Gamma}$
is homeomorphic to $(\mathbb{R}^2 - G) \times \RR$ for some infinite discrete set $G \subset \mathbb{R}^2$. So $\pi_1 (\Wi{W} \setminus \Wi{\Gamma})$ is free, which
does not contain  any $\mathbb{Z}\oplus \mathbb{Z}$ subgroup. But  $\Wi{T}$ is incompressible
in  $\Wi{W} \setminus \Wi{\Gamma}$, then $\pi_1 (\Wi{W} \setminus \Wi{\Gamma})$ should contain a
$\mathbb{Z}\oplus \mathbb{Z}$ subgroup. We get a contradiction, and thus $W_\Gamma$ must be atoroidal.

In the second case, let $V$ be an embedded solid torus in $W$ so that $\Gamma_0= \Gamma \cap V$ is a sub-link of  $\Gamma$ that is in the interior of $V$. We claim that 
$\Gamma_0$ only contains a unique periodic orbit $\gamma_1$  that is a core of $V$. Due to this claim, we have that $T$ is parallel to a boundary component of $W_\Gamma$, then
$W_\Gamma$ is atoroidal.

 Now we prove the claim.
Assume that
there exist two periodic orbits $\gamma_1$ and $\gamma_2$ in $\Gamma_0$. Since $\pi_1 (V)\cong \ZZ$ which can be generated by a core of $V$, and every nonzero power of $\gamma_1$ or $\gamma_2$ is homotopy nonvanishing,
then there exist two nonzero integers $p_1$ and $p_2$ such that $\gamma_1^{p_1}$
and $\gamma_2^{p_2}$ are freely homotopic. But there is a transverse torus $T_{\gamma_1}$ which intersects  $\gamma_1$. This contradicts the conclusion of  Proposition \ref{p.homotopyclass},
 and so from now on  we can assume that $\Gamma_0=\{\gamma_1\}$.
$\gamma_1$  is certainly freely homotopic to $c(V)^z$ for some $z\in \ZZ\setminus \{0\}$, where $c(V)$ is an oriented core of $V$.
This means that $\hbox{Int}(\gamma_1, T_{\gamma_1}) =z\cdot \hbox{Int}(c(V), T_{\gamma_1})$ where
$\hbox{Int}(c, T_{\gamma_1})$ is the algebraic intersection number of a  closed curve $c$ and $T_{\gamma_1}$ in $W$. 

For simplicity, let $\gamma=\gamma_1$, $k=\hbox{Int}(c(V), T_{\gamma_1})$ and $p=zk$, and without loss of generality assume 
that both of $z$ and  $k$ are positive. Similar to the proof of Proposition \ref{p.homotopyclass},
we build a $p$-cyclic cover $W_p$ of $W$ by cutting $W$ along $T_{\gamma}$ and gluing $p$ cutting pieces together. Let $\Wi Y_t$ be the lifting Anosov flow on $W_p$ and $\Wi T_{\gamma}$ be a lift
of $T_{\gamma}$ that is transverse $\Wi Y_t$. Under this cover, $\gamma$ is lifted to $p$ periodic 
orbits $\Wi \gamma^1, \dots, \Wi \gamma^p$. By construction, we can assume 
$\Wi \gamma^1, \dots, \Wi \gamma^z$ are in a lifting connected component $\Wi V$ of $V$.
Certainly $\Wi V$ is also homeomorphic to a solid torus, therefore similar to the above discussion,
every two of $\Wi \gamma^1, \dots, \Wi \gamma^z$ are freely homotopic up to finite power.
By Proposition \ref{p.homotopyclass} once more, we have that $z=1$ and 
therefore $\gamma$ and $c(V)$ are freely homotopic in $V$.
Now we are left to prove that they are isotopic. Due to  \cite{Fen1} and  following an easy computation of fundamental group, $\Wi{\gamma}$ is not knotted in $\Wi W$ and thus is not knotted in $\Wi V$. Therefore, $\gamma$ is not knotted in $V$, and then $c(V)$ and $\gamma$ are isotopic in $V$. The claim is proved.

Next, we will prove that each boundary torus $T$ of $W_\Gamma$ is incompressible in $W_\Gamma$.
We assume by contradiction that there exists a boundary torus $T$ of $W_\Gamma$ is compressible in $W_\Gamma$.
By Dehn Lemma, there exists an essential simple closed curve $c$ in $T$ so that $c$ bounds an embedded disk $D_0$ in $W_\Gamma$. Then there is a two-handle $U$ parameterized by $D\times [-1,1]$ in $W_\Gamma$ so that,  $D_0 =D\times {0}$, $\partial D \times [-1,1] \subset T$. Then $U(\gamma)\cup U$ is homeomorphic to $W_0-B$, where $B$ is an open 3-ball and $W_0$ is a closed 3-manifold with Heegaard genus  no larger than $1$.
Therefore, $W_0$ is homeomorphic to one of $\SS^3$, a lens space and $\SS^1 \times \SS^2$.
The first case can not happen since it implies that some periodic orbit $\gamma_i\in \Gamma$ $(i=1,\dots, p)$ is in the interior of a 3-ball.
Both of the left two cases also can not happen due to the following reasons. It is easy to observe that  $W_0$ is  a prime decomposition element of $W$. But notice that $W$ is irreducible, then $W$ is prime and therefore $W$ is homeomorphic to $W_0$. This means that $W$ is homeomorphic to either a lens space or $\SS^1 \times \SS^2$. But both  cases can not happen since as an underling manifold of an Anosov flow, $W$ must be irreducible and $\Wi W$ (the universal cover of $W$) is homeomorphic to $\RR^3$.

Now, we prove that  $W_\Gamma$ is anannular, i.e. every incompressible annulus with boundary neatly intersecting  $\partial W_\Gamma$ is parallel to  $\partial W_\Gamma$ relative to its boundary. Let $A$ be such an embedded incompressible annulus with $\partial A =a_1 \cup a_2$, where $a_1$ and $a_2$ are two circles in $\partial W_\Gamma$. 
Then naturally there are two cases: $a_1$ and $a_2$ are in two tori $T_{\gamma_1}$ and $T_{\gamma_2}$ of $\partial W_\Gamma$,
and $a_1$ and $a_2$ are in the same torus $T_\gamma$ of $\partial W_\Gamma$. Here $\gamma_1,\gamma_2,\gamma \in\Gamma$ and $T_{\gamma_1}$, $T_{\gamma_2}$, $T_{\gamma}$ are the corresponding boundary tori of  $\partial W_\Gamma$.

In the frist case, let $T$ be the inside boundary  of a small tubular neighborhood of the two complex $T_{\gamma_1}\cup A\cup T_{\gamma_2}$. Obviously, $T$ is a torus and moreover $T$
bounds $U_0\cong\Sigma_0 \times S^1$ on one side of $W_\Gamma \setminus T$ where  $\Sigma_0$ is a pants. Then $T$ is not parallel to a boundary torus of $W_\Gamma$. Since $W_\Gamma$ is atoroidal, $T$ is compressible in $W_\Gamma$. Due to the fact that each boundary torus of $W_\Gamma$ is incompressible in $W_\Gamma$, and by a similar discussion as before, one can get that 
$T$ bounds a solid torus $V_T$ on the other side of $W_\Gamma \setminus T$. This is certainly not true since $\partial W_\Gamma$ contains at least $4$ boundary connected components.  Therefore, this case never happens.

In the second case, let $T^1$ and $T^2$ be the inside boundary  of a small tubular neighborhood $N$  of the two complex $T_{\gamma}\cup A$. Similar to the above discussion,
$T^i$ ($i=1,2$)  either bounds a solid torus in $W_\Gamma$ or is parallel to
$T_{\Gamma}$.   Since $\partial W_\Gamma$ contains at least $4$ boundary connected components and $\partial N = T^1 \cup T^2 \cup T_\Gamma$, one can concluse that $T^1$ is parallel to $T_\Gamma$, and $T^2$ bounds a solid torus $V_{T^2}$ in $T_\Gamma$.
Moreover, $T^1$ and $T_\Gamma$ bound the thickened torus $U$ that is the union of
$N$ and $V_{T^2}$. This implies that each $a_i$ ($i=1,2$) is isotopic to the core of $V_{T^2}$ in $U$, and therefore $A$ is  parallel to  $\partial W_\Gamma$ relative to its boundary. Then $W_\Gamma$ is anannular.

In summary, $W_\Gamma$ is an atoroidal, anannular  and irreducible $3$-manifold with a finite number of incompressible tori as the boundary, then $W_\Gamma$ is anannular, atoroidal and Haken.   Then by Thurston's Hyperbolization Theorem \cite{Thu}, $W_\Gamma$ is a hyperbolic $3$-manifold, i.e. the interior of $W_\Gamma$ admits a complete hyperbolic geometry with finite volume. The sufficiency part is proved, and  then the proof of the lemma is complete.
\end{proof}

Now we can finish the proof of Proposition \ref{p.hyperbolic}.
 \begin{proof}[Proof of Proposition \ref{p.hyperbolic}]
 Consider the manifold $W^m$, the Anosov flow $Y^m_t$, and the link $\Gamma\subset W^m$ consisting of the periodic orbits $\alpha_1^m,\dots,\alpha^m_{2n}$ of $Y^m_t$.  It follows from our construction that (see Theorem~\ref{t.fhplug} and~\ref{t.con}): 
 \begin{itemize}
 \item[--] $W^m$ is a toroidal $3$-manifold with two hyperbolic JSJ pieces $U^-,U^+$ separated by $4n$ JSJ tori $T_1,\dots,T_{4n}$, 
 \item[--] the JSJ tori $T_1,\dots,T_{4n}$ are transverse to the orbits of the flow $Y^m_t$,
 \item[--] the periodic orbit $\alpha^m_j$ intersects each of the JSJ tori $T_{2j-1}$ and $T_{2j}$. 
 \end{itemize}
Hence, $\Gamma$ is full-filling. Further recall that every periodic orbit in $\Gamma$ intersects a transverse torus of $Y^m_t$, then by Lemma \ref{l.hypknot}, $W_{\Gamma}$ is hyperbolic.  Now recall that the manifold $M^m$ is obtained from $W^m$ by performing $k$-Dehn surgeries on the orbits $\alpha^m_1,\dots,\alpha^m_{2n}$. Hence, Thurston's hyperbolic Dehn-filling theorem implies that, when $|k|$ large enough, the manifold $M^m$ is  hyperbolic.
 \end{proof}
 
 In the sequel, we will assume that the integer $k$ was chosen in such a way that the manifolds $M^0,\dots,M^{2n}$ are hyperbolic.

\begin{proposition}\label{p.non-R-covered}
 For every $m=0,\dots, 2n$, the Anosov flow $Z_t^m$ is non-$\RR$-covered. 
\end{proposition} 

The proof of this result relies on the following lemma:

\begin{lemma}\label{l.nonR}
Let $Z_t$ be an Anosov flow on a closed three manifold $M$, and denote by $\cF^s$ the weak stable foliation of $Z_t$. Let $S$ be a surface embedded in $M$, transverse to the orbits of $Z_t$, and consider the one-dimensional foliation $f^s:=\cF^s\cap S$. Assume that there exists a $f^s$-invariant annulus $A\subset S$ so that $f^s_{|A}$ is a Reeb annulus, and so that the two boundary components of $A$ are included in two different leaves of $\cF^s$. Then the Anosov flow $Z_t$ is non-$\RR$-covered.
\end{lemma}

\begin{proof}
Let $\Wi M$ be the universal cover of $M$, and denote by $\Wi \cF^s$ the lift foliation of $\cF^s$ in $\Wi M$. Let $\Wi S$ be a connected component of the lift of $S$, and denote by $\Wi f^s$ the lift of $f^s$ in $\Wi S$. Let $\Wi A$ be a connected component of the lift of $A$ in $\Wi S$. Denote by $c_1,c_2$ be the boundary leaves of the Reeb annulus $A$. Since $A$ is a Reeb annulus, the honolomy of the leaf containing $c_i$ ($i=1,2$) in $\cF^s$ is nontrivial. It follows that $\Wi A$ is homeomorphic to $[0,1]\times \RR$. Denote by $\Wi c_1$ and $\Wi c_2$ the boundary components of $\Wi A$ (so that $\Wi c_1$ is a lift of $c_1$ and $\Wi c_2$ is a lift of $c_2$). For $i=1,2$, denote by $L_i$ the leaf of $\cF^s$ containing $c_i$, and by $\Wi L_i$ the lift of $L_i$ containing $\Wi c_i$. Recall that $L_1\neq L_2$ by assumption. We pick a leaf $L$ of $\cF^s$ which does not contain any periodic orbit, so that $L\cap A \neq \emptyset$. Observe that $L\neq L_1,L_2$ since $L_1$ and $L_2$ both contain periodic orbits. Hence $L_1,L_2,L$ are three pairwise different leaves. Let $\Wi L$ be a  connected component of the lift of $L$ so that $\Wi L \cap \Wi A \neq\emptyset$, and $\Wi c$ be a connected component of $\Wi L \cap\Wi A$.

We claim that $\Wi c$ (respectively $\Wi c_1$ and $\Wi c_2$) is the unique leaf of $\Wi f^s$ contained in $\Wi L$ (respectively in $\Wi l_1$ and  $\Wi l_2$). We will only treat the case of $\Wi c$, the arguments being similar for $\Wi c_1$ and $\Wi c_2$. Assume by contradiction that there exists a leaf $\Wi c '$ of $\Wi f^s$ such that $\Wi c', \Wi c\subset\Wi L$. On the one hand, since $\Wi f^s$ is a Reeb foliation on the band $\Wi A$, one can find an arc $a$ in $\Wi A$, transverse to  $\Wi f^s$, and so that the two ends $p'$ and $p$ of $a$ are in $\Wi c'$ and $\Wi c$ respectively. On the other hand, since $\Wi c'$ and $\Wi c$ are in the same leaf $\Wi L$ of  $\Wi \cF^s$, one can find an arc $b$ joining $p$ to $p'$ in $\Wi L$. Using standard techniques, the loop obtained by concatenating $a$ and $b$ can be perturbed to loop transverse
to $\Wi \cF^s$ (note that $\Wi \cF^s$ is transversally orientable since $\Wi M$ is simply connected). By Novikov theorem, such a transverse loop cannot be homotopic to a point, which contradict the fact that $\Wi M$ is simple connected. Therefore $\Wi c$ must be the unique leaf of $\Wi f^s$ as a subset of $\Wi l$, and the claim is proved.

Since $\Wi f^s$ is a Reeb foliation on $\Wi A$, each of the three leaves $\Wi c_1$, $\Wi c_2$ and $\Wi c$ does not separate the other two from each other in $\Wi A$. Using the claim proved in the previous paragraph, we deduce that each of the three leaves $\Wi L_1$, $\Wi L_2$ and $\Wi L$ does not separate the other two from each other in $\Wi M$. This shows that the Anosov flow $Z_t$ is non-$\RR$-covered.
\end{proof} 

\begin{proof}[Proof of Proposition \ref{p.non-R-covered}]
Fix $m\in\{0,\dots,2n\}$. According to our construction, the one-dimensional foliation $f^{m,s}_i$ induced by the stable foliation of the flow $Y^m_t$ on the two-torus $T_i$ is made of $2i+2$ cyclically ordered Reeb annuli. Moreover, the boundary leaves of these annuli are included in stable manifolds of pairwise different periodic orbits. The periodic orbits $\alpha^m_1,\dots,\alpha^m_{2n}$ intersect the torus $T_i$ at a single point $q_i^m$. It follows that most of the above-mentioned Reeb annuli are not destroyed by the Dehn-Fried surgeries. More precisely, the flows $Y^m_t$ and $Z^m_t$ are orbitally equivalent on the complement of $\alpha^m_1\cup\dots\cup\alpha^m_{2n}$. The image of $T_i\setminus\{q_i^m\}$ under the orbital equivalence is a punctured torus transverse to the orbits of the flow $Z^m_t$. The image of $f^{m,s}_i$ under the orbital equivalence is the one-dimensional  foliation induced by the stable foliation of $Z^m_t$ on the punctured torus. It contains $2i+1$ Reeb annuli. The boundary leaves of these annuli are included in stable manifolds of pairwise different periodic orbits. Hence, the flow $Z^m_t$ satisfies the hypothesis of Lemma \ref{l.nonR}. Hence $Z_t^m$ is a non-$\RR$-covered Anosov flow. 
 \end{proof}

%
%

According to Propositions~\ref{p.homeomorphic-manifolds}, \ref{p.hyperbolic} and ~\ref{p.non-R-covered}, assuming $|k|$ is large enough and up to some orbital equivalences, the flows $Z^0_t,\dots,Z^{2n}_t$ can be seen as $2n+1$ non-$\RR$-covered Anosov flows on the same hyperbolic $3$-manifold $M\cong M^0\cong\dots\cong M^{2n}$. This completes the construction part of the paper. What remains to be done is to prove that the Anosov flows $Z^1_t,\dots,Z^{2n-1}_t$ are pairwise orbitally inequivalent.

\section{Lozenges of the flow $Z_t^m$}
\label{s.EAP}
From now on, we will distinguish the orbital equivalence classes of $\{Z_t^m\}$.
\subsection{Orbit space}\label{ss.Osp} 
In this subsection, we introduce some conceptions and facts about orbit space theory mainly built by
Fenley and Barbot (\cite{Fen1}, \cite{Fen2}, \cite{Ba}, etc). 
The details we refer to Section $2$ of \cite{BaFe}.

Let $\Xi_t$ be an Anosov flow on  a closed orientable 3-manifold $P$, $\Wi P$ be the universal cover of $P$ and $\Wi{\Xi}_t$ be the corresponding lift flow on $\Wi P$.
 Let $\cO$ be the quotient space of $\Wi P$ by collapsing every  orbit of $\Wi{\Xi}_t$ to a point, and  call $\cO$ the \emph{orbit space} of the Anosov flow $\Xi_t$.
The space $\cO$ with its induced topology is homeomorphic
to $\RR^2$.

For any point (or set) $x\in M$, we  denote by $W^s(x)$ and $W^u(x)$  the weak
stable and weak unstable leaf through $x$, and denote by $\Wi{W}^s(x)$ and  $\Wi{W}^u(x)$ 
the  corresponding lifts on $\Wi M$.

Let $\alpha$ and $\beta$ be  two points in $\cO$ (equivalently, two orbits of $\Wi{\Xi}_t$) and let  $A\subset \Wi W^s(\alpha), B\subset \Wi W^u(\alpha), C\subset \Wi W^u(\beta)$ and  $D\subset \Wi W^u(\beta)$ be four half leaves satisfying:
\begin{enumerate}
\item for every weak stable leaf $l^s$, $l^s \cap B \neq \emptyset$ if and only if $l^s \cap D \neq \emptyset$;
\item for every weak unstable leaf $l^u$, $l^u \cap A \neq \emptyset$ if and only if $l^u \cap C \neq \emptyset$;
\item the half-leaf $A$ does not intersect $D$ and $B$ does not intersect $C$. 
\end{enumerate}
A \emph{lozenge} $L$ with corners $\alpha$ and $\beta$ is the open subset of $\cO$ given by (see
Figure \ref{f.lozenge}):
$L = \{p \in \cO\mid \Wi W^s(p) \cap B \neq \emptyset, \Wi W^u (p) \cap A\neq \emptyset\}$.
The half-leaves $A, B, C$ and $D$ are called the \emph{edges}.

 \begin{figure}[htp]
\begin{center}
  \includegraphics[totalheight=2.8cm]{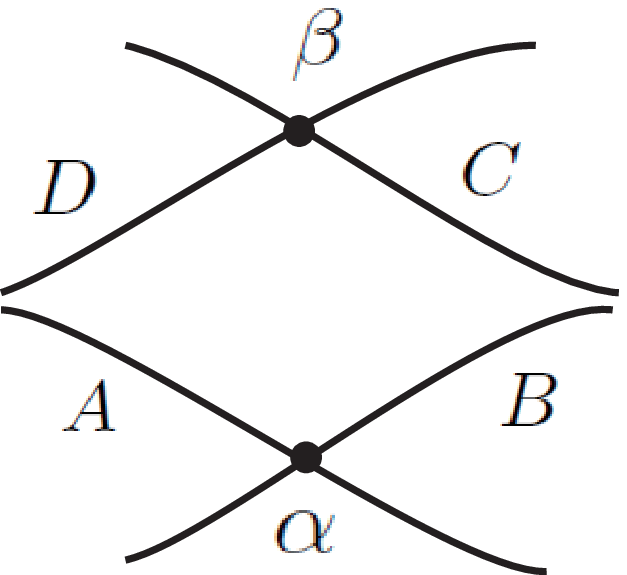}\\
  \caption{Lozenge}\label{f.lozenge}
\end{center}
\end{figure}

We say that two lozenges are \emph{adjacent} if they either share an edge (and a
corner), or they share a corner. In particular, if two lozenges are adjacent by sharing an edge, we say that
they are \emph{edge-adjacent}.

A \emph{chain of lozenges} $\cC$ is a finite or countable collection
of lozenges such that, for any two lozenges $L, L'\in \cC$, there exists a finite
sequence of lozenges $L_0, . . . ,L_n \in \cC$ such that $L=L_0, L'=L_n$ and for any
$i=0,1,\dots, n-1$, $L_i$ and $L_{i+1}$ are adjacent.  See Figure \ref{f.clozenge}.

 \begin{figure}[htp]
\begin{center}
  \includegraphics[totalheight=3.6cm]{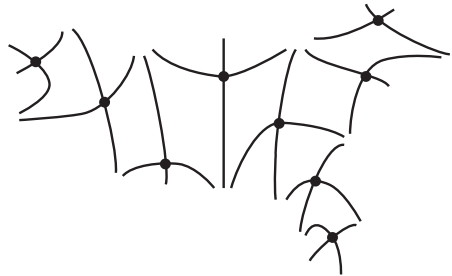}\\
  \caption{Chain of lozenges}\label{f.clozenge}
\end{center}
\end{figure}

Lozenges are important to the study of freely homotopic orbits. Because Barbot and 
Fenley  \cite{BarFe2} proved that if a periodic orbit is freely homotopic to another
periodic orbit or its inverse, then there exists two coherent lifts in
$\Wi P$ that are two corners of a chain of lozenges (see Section $2$ of \cite{BaFe}). Conversely,
if there exists two lifts $\Wi \alpha$ and $\Wi \beta$ in $\Wi P$  that are two corners
of a chain of lozenges, then, up to switching $\alpha$ and $\beta$, the orbit $\alpha$ is freely
homotopic to $\beta^{\pm1}$ or $\beta^{\pm2}$.
Let $\Wi \alpha$ be the lift of a periodic orbit, then  there exists
a uniquely defined maximal chain of lozenges which contains $\Wi \alpha$ as a corner.
We call this maximal chain $\Wi{FH}(\Wi \alpha)$ . By Fenley’s result,
the union of the corners in $\Wi{FH}(\Wi \alpha)$ contains (a coherent lift of) the complete
free homotopy class of $\alpha$, as well as all the orbits freely homotopic to the
inverse of $\alpha$.

Now we recall a fundamental relationship between fundamental Birkhorff annuli and Lozenges discovered by Barbot \cite{Ba}.
\begin{lemma}\label{l.photoB}
Let $\Xi_t$ be an Anosov flow 
with co-orientable stable foliation \footnote{If the stable foliation is not co-orientable, $L$ maybe is associated to a Klein bottle. See Barbot \cite{Ba}.}
on a closed orientable $3$-manifold $P$, and $L$ be a lozenge of 
the orbit space $\cO$ of $(P,\Xi_t)$ such that the two vertices of $L$ are associated to two periodic orbits
of $\Xi_t$. Then there is a fundamental Birkhoff annulus $A$ of $\Xi_t$ so that $A$ is associated to $L$ in the sense that  $L$ is the projection in $\cO$ of a lift $\Wi A \subset \Wi P$ of $A$.
\end{lemma}

\subsection{Chain of lozenges of $Y_t^m$}\label{ss.clytm} 
 It is obvious that  for every $\gamma_i^{j,\pm}\in \Gamma$, there are $(2i+2)$ periodic orbits
$\gamma_i^{1,\pm}, \dots, $ $\gamma_i^{2i+2,\pm}$ that are freely homotopic to
$\gamma_i^{j,\pm}$, and there are $(2i+2)$ periodic orbits
$\gamma_i^{1,\mp}, \dots, \gamma_i^{2i+2,\mp}$ that are freely homotopic to
$(\gamma_i^{j,\pm})^{-1}$. The definitions of the related notations see item (6') of Theorem \ref{t.fhplug}. We will show that the periodic orbits in $\Gamma$
are special in the following sense:

\begin{lemma}\label{l.Whomotopy}
 Let $\omega$ be a periodic orbit of $Y_t^m$ which is not contained in $\Gamma$, then there does not exist any other periodic orbit $\omega'$ of $Y_t^m$ such that either $\omega'$ or $(\omega')^{-1}$ is freely homotopic to $\omega$.
\end{lemma}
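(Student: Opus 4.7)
The plan is to argue by contradiction, using the JSJ decomposition of $W^m = U^+ \cup_{T_1 \cup \cdots \cup T_{4n}} U^-$ together with Thurston's classification of essential surfaces in hyperbolic fibered $3$-manifolds. Suppose $\omega \notin \Gamma$ is freely homotopic in $W^m$ to $(\omega')^{\epsilon}$ for some periodic orbit $\omega' \neq \omega$ of $Y_t^m$ and some $\epsilon \in \{\pm 1\}$. The homotopy yields an essential singular annulus $A$ in $W^m$ whose boundary realizes $\omega \sqcup (\omega')^{\epsilon}$; since $\omega \neq \omega'$ there is no Möbius band degeneracy, and by standard surface-tower arguments one may assume $A$ is $\pi_1$-injective.

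First I would put $A$ in minimal position with respect to the JSJ decomposition, so that $A \cap (\bigcup_i T_i)$ consists of essential circles in both $A$ and $\bigcup_i T_i$. This cuts $A$ into finitely many essential sub-annuli, each contained in a single hyperbolic piece $U^{+}$ or $U^-$. Each $U^{\pm}$ is a hyperbolic fibered $3$-manifold whose monodromy is, away from the former singular points, the pseudo-Anosov $f_0$; the DA-splitting does not alter the hyperbolic structure of the interior. By Thurston's theorem on incompressible surfaces in such manifolds, every essential annulus in $U^{\pm}$ is peripheral, with both boundary components isotopic into $\partial U^{\pm} = \bigcup_i T_i^{\pm}$. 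Reassembling across the JSJ tori, the boundary components $\omega$ and $\omega'$ of $A$ must themselves be freely homotopic in $W^m$ into $\bigcup_i T_i$; equivalently, $[\omega] \in \pi_1(W^m)$ is conjugate into some peripheral subgroup $\pi_1(T_i)$.

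The decisive step is to show that a periodic orbit of $Y_t^m$ whose conjugacy class is peripheral with respect to the JSJ tori must in fact lie in $\Gamma$. For orbits confined to one piece $U^{\pm}$ this would follow from the pseudo-Anosov origin of the flow: the periodic orbits of the suspension of $f$ whose free homotopy class is peripheral in the cusped hyperbolic $3$-manifold $U^{\pm}$ are exactly those coming from the former singularities of $f_0$, namely the removed attracting/repelling orbits $\gamma_i^{\pm}$ and the boundary saddle periodic orbits $c_i^{v,\pm}$ adjacent to them. For orbits crossing the JSJ tori, a Bass-Serre argument applied to the graph-of-groups decomposition of $\pi_1(W^m)$ with vertex groups $\pi_1(U^{\pm})$ and edge groups $\pi_1(T_i)$ would reduce the question to the same statement within each piece. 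In either scenario one concludes $\omega \in \Gamma$, contradicting the hypothesis.

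I expect the main obstacle to be this last step: identifying precisely which periodic orbits of $Y_t^m$ have peripheral conjugacy class, especially for orbits whose projection to a fiber of $U^{\pm}$ passes close to, but is not equal to, a former singular orbit. One has to combine the symbolic/Markov structure of the saddle basic sets $\Lambda^{\pm}$ with the cusped hyperbolic geometry of the fibered pieces $U^{\pm}$ in order to rule out "accidental" peripheral orbits, and to correctly treat the orientation-reversal case $\epsilon = -1$ (where the matching of asymptotic directions along $\bigcup_i T_i$ becomes the key bookkeeping ingredient). The outline in the introduction hints that this obstacle can alternatively be dispatched by direct homology computations in $W^m$, which might bypass the full Bass-Serre analysis.
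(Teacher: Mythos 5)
There is a genuine gap, and it sits exactly where you flag it: your argument reduces everything to the claim that a periodic orbit of the flow in a piece $U^{\pm}$ whose free homotopy class is peripheral must already belong to $\Gamma$, and you leave that claim unproven. This is not a routine verification to be postponed; it is the actual content of the lemma, and neither cusped hyperbolic geometry nor the "pseudo-Anosov origin" of the flow delivers it for free. Worse, your peripherality reduction only produces information when the homotopy annulus meets the tori $\bigcup_i T_i$ essentially. In the remaining case --- $\omega$ and $\omega'$ lying in the same piece with the free homotopy supported inside that piece --- hyperbolicity of $U^{\pm}$ rules out nothing: two distinct closed orbits of a flow in a hyperbolic manifold are freely homotopic precisely when they represent the same conjugacy class in $\pi_1$, and no essential-annulus or peripherality argument forbids that. (Indeed, for general Anosov flows such pairs exist in abundance; that is what lozenges detect.) So your scheme cannot close either the "accidental peripheral orbit" case or the "both orbits interior to one piece" case, and the case where $\omega$ or $\omega'$ actually crosses a torus is only waved at via Bass--Serre, whereas it needs a genuine argument.

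The paper closes these gaps with two tools you do not invoke. First, Proposition \ref{p.homotopyclass} (proved in Appendix B via the Barbot--Fenley lozenge/orbit-space machinery and a cyclic-cover trick, not via homology) shows that any periodic orbit meeting a transverse torus is alone in its free homotopy class up to powers; this immediately forces $\omega$ and $\omega'$ to be disjoint from all the $T_i$. Second, after the same cut-and-paste of the annulus along the tori that you perform (inessential intersection circles removed using irreducibility), the decisive input is dynamical rather than geometric: by Nielsen fixed point theory for pseudo-Anosov homeomorphisms \cite{JG}, distinct periodic orbits of the suspension pseudo-Anosov flow on $V^{\pm}$ are never freely homotopic, and no orbit is freely homotopic to a power of a singular orbit $\gamma_i$. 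This single fact kills both the case where the annulus crosses a torus ($\omega\simeq\gamma_1^{\,r}$ in $V^+$) and the case where it stays inside one piece ($\omega\simeq\omega'$ in $U^+$). If you want to salvage your outline, you would have to prove the Nielsen-type statement anyway, at which point the detour through essential annuli and peripheral subgroups becomes unnecessary. Note also that the homology computations you hope might "dispatch" the obstacle are used in the paper only later, for the surgered flows $Z_t^m$ (Lemmas \ref{l.noadjloz} and \ref{l.noonenew}), not for this lemma.
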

\begin{proof}
We assume by contradiction that there exists a periodic orbit
$\omega$  of $Y_t^m$ which is not contained in $\Gamma$  and  a  periodic orbit $\omega'$ of $Y_t^m$ such that either $\omega'$ or $(\omega')^{-1}$ is freely homotopic to $\omega$. 
Since there are $4n$ transverse tori $T_1, \dots, T_{4n}$, by Proposition \ref{p.homotopyclass}, both of $\omega$ and $\omega'$ are disjoint to $\cup_1^{4n}T_i$. Without loss of generality, suppose $\omega$ is a periodic orbit of $(U^+, X_t)$.

Let  $g: A=S^1 \times [0,1] \to W$ be a continuous map  such that:
\begin{enumerate}
\item$g(S^1 \times {0}) = \omega$ and $g(S^1 \times 1)=\omega'$;
\item $g(A)$ is a two complex in $W$ which is clean when close to $\omega$ and $\omega'$;
\item $g(A)$ and $T_i$ are in general position.
\end{enumerate}
 By the generality,  $g$ locally is a homeomorphism. Therefore, $g^{-1} (T_1 \cup \dots \cup T_{4n})$  is the union of finitely many pairwise disjoint circles $c_1, \dots, c_m$ in the interior of $A$. 

We further assume that $m>0$.
One can observe that each $c_j$ ($j\in \{1, \dots, m\}$) is homotopy non-vanishing. Otherwise, let $c_{j_0}$ be an inner most circle of $\{c_1, \dots, c_m\}$.
Then $g(c_{j_0})$ is a homotopy vanishing loop (maybe not embedded) in some $T_i$, which means that $g(c_{j_0})$ bounds an immered disk $h: D^2 \to T_i$.  Note that $g(c_{j_0})$ also bounds an immersed disk $g(D_{j_0})$ ($D_{j_0}$ is the disk bounded by $c_{j_0}$ in $A$) in either $U^+ \cap g(A)$ or $U^- \cap g(A)$.
Since $W$ admits an Anosov flow and therefore admits a taut foliation, then 
$W$ is irreducible, i.e. $\pi_2 (W)=0$. Further notice that $g(c_{j_0})= g(\partial D_{j_0}) = g(h (\partial D^2))$, then we can cancel $c_{j_0}$ for $g$ by rebuilding $g$ following the homotopy with a small pertubation.

From now on, we can suppose each $c_j$ is essential in  $A$.
Since $A$ is an annulus,  $S^1 \times {0}, c_1, \dots c_m, S^1 \times {1}$ are pairwise parallel and disjoint. Suppose  $S^1 \times {0}$ and $c_1$ are adjacent. Then without loss of generality, we can suppose $g( S^1 \times {0})=\omega$ and $g(c_1)\subset T_1$ are homotopic in $U^+$. But by item
7 of Theorem \ref{t.fhplug}, there does not exist any closed curve $c$ in $T_1$ such that $\omega$ and $c$ are freely homotopic in $U^+$. We get a contradiction.

The unique left case is that $m=0$, in this case $\omega$ is  freely homotopic to either  $\omega'$ or $(\omega')^{-1}$ in $U^+$, but it also can not happen due to item
7 in Theorem \ref{t.fhplug} once more. The proof of the lemma is complete.
\end{proof}

Due to Lemma \ref{l.Whomotopy}, we have the following consequence:

\begin{corollary}\label{c.BkaGa}
If $\alpha$ is a periodic orbit in the boundary of a fundamental Birkhoff annulus $A$ of $Y_t^m$,
then $\alpha\in \Gamma$. 
\end{corollary}

\begin{remark}\label{r.brper}
Branching periodic orbit is a very important concept in the study of Anosov flows. We refer the reader  \cite{Bart} for more details about branching periodic orbits.
It is easy to see that every periodic orbit of $\Gamma$ is a branching
  periodic orbit of the Anosov flow $Y_t^m$.  Moreover,  notice that if $\beta$ is a branching periodic orbit of $Y_t^m$, then there exists another periodic orbit $\beta'$ of $Y_t^m$ such that $\beta$ and $\beta'$ are freely homotopic. Therefore, due to Lemma
\ref{l.Whomotopy},
the set of the branching periodic orbits of $Y_t^m$ is exactly $\Gamma$.
\end{remark}

\begin{lemma}\label{l.Tip}
For every $T_i$ ($i=1,\dots,4n$), there exists a torus $T_i'$ of $Y_t^m$ associated to $T_i$ such that:
\begin{enumerate}
\item $T_i'$ is isotopic to $T_i$. 
\item $T_i'$ consists of $(4i+4)$ cyclically ordered fundamental Birkhoff annuli $A_1, \dots, A_{4i+4}$ with $(4i+4)$-cyclically ordered periodic orbits $\gamma_i^{j,+}, \gamma_i^{j,-}$ ($j\in \ZZ / (2i+2)\ZZ$).
\item For every two fundamental Birkhoff annuli in $T_i'$, $Y_t^m$ transversely intersects their interiors towards the same side of $T_i'$.
\item Every flowline of $Y_t^m$ intersecting with a fundamental Birkhoff annulus with a boundary periodic orbit $\gamma_i^{j,\pm}$ (take $\gamma_i^{j,+}$ if $i$ is even and  $\gamma_i^{j,-}$ if $i$ is odd) is negatively asymptotic to  $W^s(\gamma_i^{j,\pm})$ and every flowline of $Y_t^m$ intersecting with a fundamental Birkhoff annulus with a boundary periodic orbit $\gamma_i^{j,\mp}$ (take $\gamma_i^{j,-}$ if $i$ is even and $\gamma_i^{j,+}$ if $i$ is odd) is positively asymptotic to  $W^u(\gamma_i^{j,\mp})$. 
\end{enumerate}
\end{lemma}

 \begin{figure}[htp]
\begin{center}
  \includegraphics[totalheight=4.2cm]{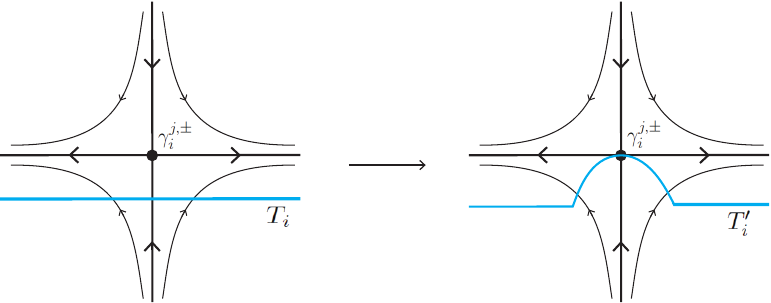}\\
  \caption{Isotopie $T_i$ to $T_i'$}\label{f.Tinew}
\end{center}
\end{figure}

\begin{proof}
For simplicity, we only prove the lemma in the case $i$ is even. The
case $i$ is odd can be similarly proved.

Take a small tubular neighborhood $U_j^s$ of $c_i^{s,j}$
and $U_j^u$ of $c_i^{u,j}$ in $T_i$, and then smoothly isotopies $T_i$
to $T_i'$ such that:
\begin{enumerate}
\item $T_i - \cup_j (U_j^s \cup U_j^u)$ is invariant under the isotopy;
\item $U_j^s$ is isotopic to $(U_j^s)'$ and $U_j^u$ is isotopic to $(U_j^u)'$.
\end{enumerate}
This is not difficult to do, since $c_i^{s,j}\subset W^s (\gamma_i^{j,+})$
and $c_i^{u,j}\subset W^u(\gamma_i^{j,-})$. See Figure \ref{f.Tinew} as an illustration.

Item 1, 2, 3 of the lemma can be routinely checked by the construction of $T_i'$.
Item 4 follows from the fact that each of $\gamma_i^{j,+}$ and $\gamma_i^{j,-}$
is a saddle periodic orbit.  
\end{proof}

\begin{lemma}\label{l.clzY}
For every  torus $T_i'$ ($i=1,\dots, 4n$), 
there exists a chain of lozenges $C$ in the orbit space $\cO_W$ of 
$Y_t^m$ such that:
\begin{enumerate}
\item $C$ consists of infinitely many lozenges $\{L_k \mid k\in \ZZ\}$ such that,
\begin{enumerate}
\item $L_k$ and $L_{k+1}$ are edge-adjacent along a stable separatrix if
$k$ is odd and an unstable separatrix if $k$ is even;
\item every three lozenges $L_{k_1}$, $L_{k_2}$ and $L_{k_3}$ of $C$ never share a common vertex. 
\end{enumerate}
\item $L_k$ corresponds to the orbits crossing $A_k$.
\item There exists an $\alpha \in \pi_1 (W)$ which can be represented by an oriented circle in $T_i'$ that transversely intersects $c_i^{1,j}$ once such that $\alpha \cdot L_k = L_{k+(4i+4)}$.
\end{enumerate}
See Figure \ref{f.infL} as an illustration. 
\end{lemma}

 \begin{figure}[htp]
\begin{center}
  \includegraphics[totalheight=3.6cm]{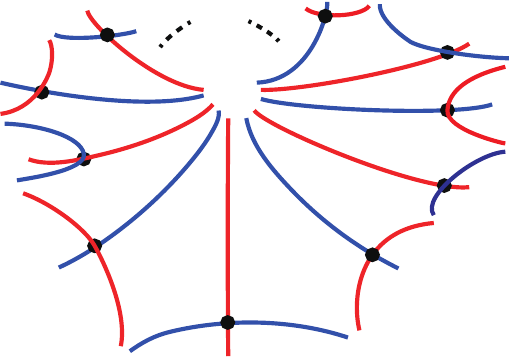}\\
  \caption{The chain of Lozenges $C$}\label{f.infL}
\end{center}
\end{figure}

\begin{proof}
Due to Lemma \ref{l.photoB} and the fact that $A_k\cap A_{k+1}$ is a periodic orbit of $Y_t^m$, we can choose a set of lozenges $\{L_k\}$ ($k=1,\dots, 4i+4$) in $\cO_W$ such  that $L_k$ corresponds to $A_k$ and $L_k$ and $L_{k+1}$ are adjacent. Due to item 4 of Lemma \ref{l.Tip}, $L_k$ and $L_{k+1}$ are edge-adjacent, and moreover up to changing the
order of the subscrips of $\{A_k\}$ if necessary, we can suppose that $L_k \cap L_{k+1}$ is a stable
separatrix if $k$ is odd and an unstable separatrix if $k$ is even. 
Notice that the two periodic orbits associated to $L_{k_1} \cap L_{k_1+1}$ and  $L_{k_2} \cap L_{k_2+1}$  ($k_1, k_2 \in \{1,\dots, 4i+1\}$ and $k_1\neq k_2$) are different.  One can routinely check that $L_1, \dots, L_k$ satisfies item 1 and item 2 of the lemma. 

Further define $L_{n(4i+4)+k}:= \alpha^n \cdot L_k$, then
$\{L_k \mid k\in \ZZ\}$ automatically satisfies item 2 and item 3 of the lemma.
Moreover, one can routinely check that $L_k$ ($k\in \ZZ$) corresponds to 
$A_k$ ($k\in \ZZ / (4i+4)\ZZ$) and $\{L_k \mid k\in \ZZ\}$ satisfies item 1 of the lemma.
\end{proof}

\begin{lemma}\label{r.clzY}
Every chain of lozenges in $\cO_W$ with vertices corresponding
to periodic orbits is the one descirbed in Lemma \ref{l.clzY}. 
\end{lemma}
\begin{proof}
Let $C'$ be a chain of lozenges in $\cO_W$ with vertices corresponding
to periodic orbits. Then  due to  Lemma \ref{l.photoB}, Lemma \ref{l.Whomotopy}
and Lemma \ref{l.clzY},
 we have that  $C'$ contains a chain of lozenges $C$  in Lemma \ref{l.clzY}. 
We assume by contradiction that $C' \neq C$,  then there exists a lozenge $L'$ in $C'\setminus C$ that share a commen vertex
$v_1$ with $C$. We further assume that $v_2$ is the other vertex of $L'$

By Lemma \ref{l.photoB} once more, we have a fundamental Birkhoff annulus $A'$
associated to $L'$ with two boundary periodic orbits $\beta_1$ and $\beta_2$ with respect to $v_1$ and $v_2$ correspondingly. Due to Lemma \ref{l.Whomotopy}, $\beta_1, \beta_2 \in \Gamma$.  Moreover, by using the similar technique with the proof of  Lemma \ref{l.Whomotopy}, we can get that both of $\beta_1$ and $\beta_2$ belong to the same torus $T_i'$ for some $i\in \{1,\dots,4n\}$. By Lemma \ref{l.Tip},  $T_i'$ is isotopic to $T_i$ and $T_1', \dots, T_{4n}'$ are pairwise disjoint. Then up to isotopy, we can  think  the manifolds obtained by cutting $W$ along $\cup T_i'$ are also $U^+$ and $U^-$.
Due to the fact that $U^+$/$U^-$ is a hyperbolic $3$-manifold and by using the similar technique with the proof of  Lemma \ref{l.Whomotopy} once more, we can get that $A'$ is homotopic to an annulus $A$ of $T_i'$ with boundary $\beta_1 \cup \beta_2$ relative to their commen boundary $\beta_1 \cup \beta_2$.

Since $\beta_1$ and $\beta_2$ are two boundary periodic orbits of a fundamental Birkhoff
annulus $A'$, then $\beta_1$ is freely homotopic to $(\beta_2)^{-1}$. In particular, $\beta_1 \neq \beta_2$, therefore without loss of generality, we can suppose that  $A$ is the union of $k$ ($1\leq k \leq 4n-1$) fundamental Birkhoff annuli $A_1, \dots, A_k$. Let $C^k$ be the sub-chain of lozenges of $C$ consisting of $k$ lozenges $L_1,  \dots, L_k$ associated to $A_1, \dots, A_k$ respectively such that $v_1$ is a corner of $L_1$. Let $v_3$ be the vertex of $L_k$ that is a lift of $\beta_2$.
We can conclude the above discussion as follows:
\begin{enumerate}
\item $L'$ is associated to $A'$ and $C^k$ is associated to $A$.
\item $A'$ and $A$ are homotopic relative to their commen boundary $\beta_1 \cup \beta_2.$ 
\item $v_1$ is a corner shared by $L'$ and $C^k$ which is a lift of $\beta_1$,
and $v_2$ and $v_3$ are two lifts of $\beta_2$.
\end{enumerate}
Then by Homotopy Lifting Theorem, we have: $v_2 =v_3$. 
According to the shape of $C$ (see Figure \ref{f.infL}), we know that
$L'$ must be in the same component  of the four components of $\cO_W \setminus (\Wi W^s(v_1) \cup \Wi W^u(v_1))$ with $C^k$. Then one can  immediately obtain that $L'=L_1$ from the definition of lozenge. Then we get a contradiction and therefore the proof of the lemma is complete.
\end{proof}

\subsection{Old and new lozenges of the flow $Z_t^m$}\label{ss.Onl}

Now we discuss the Anosov flow $Z_t^m$ ($|k|\gg 0$) on the hyperbolic $3$-manifold $M$. For simplicity, we replace $\alpha_j^m$ ($j=1,\dots,2n$) by  $\alpha_j$, and  use $\alpha_j$ to code the corresponding periodic orbit both before and after the DF surgery.
A lozenge $L$ in the orbit space $\cO_m$ of $Z_t^m$ is called an \emph{old lozenge} (resp. \emph{new lozenge}) if $L$ is disjoint with (resp. intersects)  the lifts of $\alpha_1, \dots, \alpha_{2n}$. We call it an old lozenge due to the following fact: a fundamental Birkhoff annulus $A$ of $Z_t^m$ corresponding to $L$ can be also ragarded as
 a fundamental Birkhoff annulus of $Y_t^m$.
The following two lemmas tell us that many combinations of new lozenges and old lozenges  are forbidden for  $Z_t^m$.

 \begin{figure}[htp]
\begin{center}
  \includegraphics[totalheight=4cm]{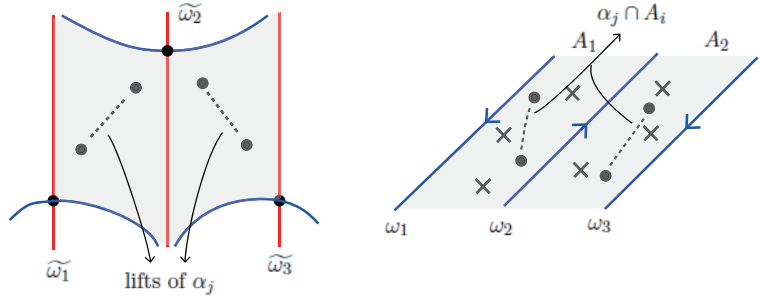}\\
  \caption{Two new adjacent lozenges}\label{f.2newadj}
\end{center}
\end{figure}

\begin{lemma}\label{l.noadjloz}
There do not exist any two new lozenges $L_1$ and $L_2$ of $Z_t^m$ so that,
\begin{enumerate}
  \item each vertex of $L_i$ ($i=1,2$) is a lift of a periodic orbit of $Z_t^m$;
  \item $L_1$ and $L_2$ share a common edge (see the left of Figure \ref{f.2newadj}). 
\end{enumerate}
\end{lemma}

\begin{proof}
Assume by contradiction that  there exist such two new lozenges $L_1$ (with vertices $\widetilde{\omega_1}$ and $\widetilde{\omega_2}$) and $L_2$ (with vertices $\widetilde{\omega_2}$ and $\widetilde{\omega_3}$). By Lemma \ref{l.photoB}, there exist two adjacent
fundamental Birkhoff annuli $A_1$ (with boundary periodic orbits $\omega_1$ and $\omega_2$)  and $A_2$ (with boundary periodic orbits $\omega_2$ and $\omega_3$) such that,
\begin{enumerate}
  \item $A_1$ and $A_2$  are associated to $L_1$ and $L_2$ as Lemma \ref{l.photoB} explains;
  \item the union of $A_1$ and $A_2$ can be isotopically pushed a little to an annulus $A$ which transversely intersects $Z_t^m$.
\end{enumerate}

Since the interior of $L_i$ contains some lifts of $K=\{\alpha_1, \dots, \alpha_{2n}\}$,
 $A_i$ ($i=1,2$) intersects  $K$ finitely many (nonzero) times. Here finiteness is because that we can suppose that the compact surface $A_i$ intersects each $\alpha_j$  generally. Then we can assume that 
$\alpha_j$ intersects  $A_i $ $s_i^j$  times such that $\Sigma_{j} s_i^j >0$. 

In $W$, let $V(\alpha_j)$ be a standard small solid torus neighborhood of $\alpha_j$, recall that  we can choose two  oriented circles $\mu_j,\lambda_j$
on $\partial V(\alpha_j)$, so that $\mu_j$ is a meridian circle and $\lambda_j$ is a longitute circle, i.e. a component of $W^s(\alpha_j) \cap \partial V(\alpha_j)$, and a circle parameterized by $\mu_j+k\lambda_j$ bounds a disk in $M$ \footnote{The meridian circle and the longitute circle were firstly introduced at the beginning of Section \ref{s.conh}. In Section \ref{s.conh}, we use $\mu_j^m$ and $\lambda_j^m$ to represent the   meridian circle and the longitute circle respectively.  But from now on, for the sake of simplicity, we  replace $\mu_j^m$ and $\lambda_j^m$ with $\mu_j$ and $\lambda_j$.}. 

Now we do some homology caculations, which will provide us a contradiction to finish the proof of the lemma.
Due to the annulus $A_1$ in $W$, we can get that in $H_1 (W)$,
$$[\omega_2]=-[\omega_1]-\Sigma_{j} s_1^j [\mu_j+k\lambda_j]= -[\omega_1]-k\Sigma_j s_1^j [\lambda_j]$$ since $[\mu_j]=0$ in $H_1 (W)$.
\footnote{In fact $\omega_i$ ($i=1,2$) maybe is some sugeried periodic orbit $\alpha_j$, but it is easy to observe that
as a boundary component of a Birkhoff annulus, this does not affect the establishment of the above equality. We will also use this observation  several times during the proofs below.}

Due to the annulus $A_2$ in $W$, we can get that in $H_1 (W)$,
$$[\omega_2]=-[\omega_3]+\Sigma_{j} s_2^j [\mu_j+k\lambda_j]= -[\omega_3]+k\Sigma_j s_2^j [\lambda_j].$$

 \begin{figure}[htp]
\begin{center}
  \includegraphics[totalheight=4.5cm]{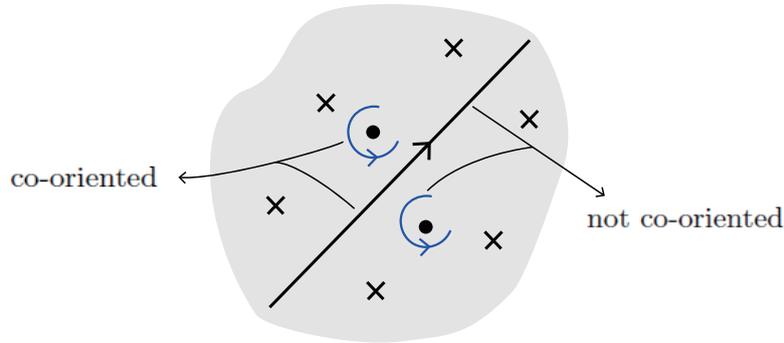}\\
  \caption{Co-orientation and non-co-orientation when $\alpha_j$ crosses $A_i$}\label{f.coandnco}
\end{center}
\end{figure}

The relationships between $A_1$ and $A_2$ can be found in Figure \ref{f.coandnco}. Also notice that
$\lambda_j$ is freely homotopic to $\alpha_j$ in $W^m$. 
We conclude that

$$[\omega_2]=-[\omega_1]-k\Sigma_j s_1^j [\alpha_j]=-[\omega_3]+k\Sigma_j s_2^j [\alpha_j].$$

Without loss of generality,  we assume that $k>0$. Recall that $s_i^j \geq 0$  and $\Sigma_j s_i^j >0$, then,
$$[\omega_2]= -([\omega_1]+k\Sigma_j s_1^j [\alpha_j]),$$ where $k>0$ and $\Sigma_j s_1^j >0$.
We suppose that $s_1^2 >0$, since $Int (\alpha_2, T_2)=1$ and $Int (\alpha_j, T_2)=0$ if $j\neq 2$, then
$$Int (\omega_2, T_2)= - Int (\omega_1, T_2)-k \Sigma_j s_1^j Int (\alpha_j, T_2)= - Int (\omega_1, T_2)-ks_1^2.$$

Further notice that in $(W, Y_t^m)$, $Int (\beta, T_2)\geq 0$ for every periodic orbit $\beta$,
therefore $Int (\omega_2, T_2)\geq 0$ and meanwhile  $Int(\omega_2, T_2)= - Int (\omega_1, T_2)-ks_1^2$. Then we get a contradiction, and the proof of the lemma is complete.
\end{proof}

 \begin{figure}[htp]
\begin{center}
  \includegraphics[totalheight=3.3cm]{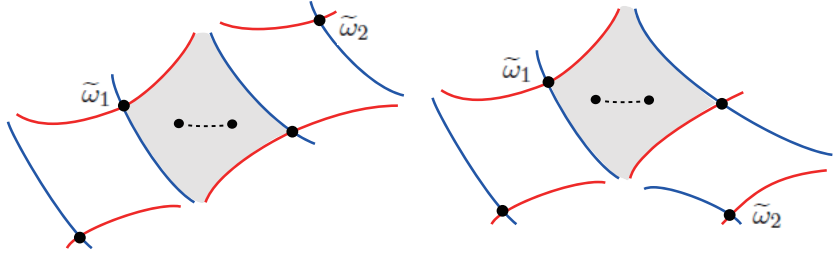}\\
  \caption{An  unrealizable case about two old lozenges}\label{f.lem4case}
\end{center}
\end{figure} 

Many parts in the proof of the following  lemma are similar to the proof of Lemma \ref{l.noadjloz}, so we will only give a brief proof.

\begin{lemma}\label{l.noonenew}
There do not exist two old lozenges $L_1$ and $L_2$ of $Z_t^m$ such that,
\begin{enumerate}
  \item each vertex of $L_i$ is a lift of a periodic orbit of $Z_t^m$;
  \item $L_1$ and $L_2$ are connected by a new lozenge $L_3$ as Figure \ref{f.lem4case} shows.
\end{enumerate}
\end{lemma}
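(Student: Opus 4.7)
The plan is to proceed by contradiction following the same homological strategy as in Lemma \ref{l.noadjloz}, adapted to the chain of three lozenges $L_1, L_3, L_2$ with $L_3$ the new middle lozenge. Assume such a configuration exists. By Lemma \ref{l.photoB}, associate three Birkhoff annuli $A_1, A_3, A_2$ to $L_1, L_3, L_2$, with boundary periodic orbits $(\omega_1, \omega_2)$, $(\omega_2, \omega_3)$, and $(\omega_3, \omega_4)$ respectively. After a small isotopy the three annuli join along $\omega_2$ and $\omega_3$ into an immersed surface whose interior is transverse to $Z^m_t$.

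Next, I would record the intersection data with the surgered orbits. Since $L_1$ and $L_2$ are old, the interiors of $A_1$ and $A_2$ are disjoint from $\alpha_1, \ldots, \alpha_{2n}$. Since $L_3$ is new, the interior of $A_3$ meets the surgered orbits; writing $s_3^j$ for the geometric intersection number of $\alpha_j$ with $A_3$, one has $\sum_{j} s_3^j > 0$. Exactly as in Lemma \ref{l.noadjloz}, using the DFG parameterization so that $[m_j + k l_j] = 0$ in $H_1(M)$ and $[l_j] = [\alpha_j]$ in $H_1(W^m)$, the three Birkhoff annuli yield the homological relations in $H_1(W^m)$
\begin{align*}
[\omega_1] + [\omega_2] &= 0,\\
[\omega_2] + [\omega_3] &= -k \sum_{j=1}^{2n} s_3^j [\alpha_j],\\
[\omega_3] + [\omega_4] &= 0,
\end{align*}
where the first and third equations lack $\alpha_j$-terms because $A_1$ and $A_2$ are disjoint from the $\alpha_j$'s, and the sign in the middle equation is fixed by the co-orientation convention as in Figure \ref{f.coandnco}.

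The contradiction then comes from pairing the middle equation with a transverse torus of $Y^m_t$. Choose $j_0 \in \{1, \ldots, 2n\}$ with $s_3^{j_0} > 0$ and intersect with $T_{2j_0}$. Since $T_{2j_0}$ is transverse to $Y^m_t$ and each $\omega_i$ is a periodic orbit of $Y^m_t$, $Int(\omega_i, T_{2j_0}) \geq 0$ for $i = 2, 3$, while from the construction of Section \ref{ss.hpU} one has $Int(\alpha_j, T_{2j_0}) = \delta_{j, j_0}$. Assuming without loss of generality $k > 0$, the left-hand side $Int(\omega_2, T_{2j_0}) + Int(\omega_3, T_{2j_0})$ is non-negative, while the right-hand side equals $-k s_3^{j_0} < 0$, contradicting the equation.

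The main obstacle I anticipate is bookkeeping the signs so that the middle equation comes out with the correct overall sign. Crucially, because the two outer annuli $A_1$ and $A_2$ carry no $\alpha_j$-contribution, no new sign ambiguity is introduced by the telescoping: the sign analysis for $A_3$ reduces verbatim to the one already performed in Lemma \ref{l.noadjloz}, and the outer equations are used only to assert positivity of $Int(\omega_i, T_{2j_0})$.
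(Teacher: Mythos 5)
Your strategy is the one the paper uses (Birkhoff annuli via Lemma \ref{l.photoB}, the homology relation coming from the new DFG meridians $m_j+kl_j$, and intersection numbers with a transverse torus $T_{2j_0}$), but your concluding step has a genuine gap: the sign of the surgery term in your middle relation is not at your disposal, and neither is the sign of $k$. For a single new Birkhoff annulus $A_3$ the correct relation is $[\omega_2]+[\omega_3]=\epsilon\, k\sum_j s_3^j[\alpha_j]$ with $\epsilon=\pm1$ determined by how the flow co-orients $A_3$ relative to its boundary orientation, and both values of $\epsilon$ genuinely occur: in the proof of Lemma \ref{l.noadjloz} the two adjacent new annuli produce contributions $+k\sum_j s_1^j[\alpha_j]$ and $-k\sum_j s_2^j[\alpha_j]$ (this is exactly the content of Figure \ref{f.coandnco}), and it is only because both signs were simultaneously available there that the reduction ``without loss of generality $k>0$'' was harmless. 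Here you have a single surgery-contributing relation, so you can normalize neither $\epsilon$ nor $k$; if $\epsilon k>0$ your comparison ``left-hand side $\geq 0$, right-hand side $=-ks_3^{j_0}<0$'' is simply not a contradiction, so the claim that the sign analysis ``reduces verbatim'' to Lemma \ref{l.noadjloz} does not hold.

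The repair is already contained in what you wrote, and it is the paper's actual argument. Intersect your two outer relations $[\omega_1]+[\omega_2]=0$ and $[\omega_3]+[\omega_4]=0$ (valid because $A_1$ and $A_2$ are old, hence carry no $\alpha_j$-terms) with $T_{2j_0}$: since $T_{2j_0}$ is transverse to $Y_t^m$, every periodic orbit meets it with non-negative algebraic intersection number, so $Int(\omega_1,T_{2j_0})+Int(\omega_2,T_{2j_0})=0$ forces $Int(\omega_2,T_{2j_0})=0$, and likewise $Int(\omega_3,T_{2j_0})=0$. Feeding these equalities (rather than mere non-negativity) into the middle relation gives $\pm k\,s_3^{j_0}=0$, a contradiction independent of $\epsilon$ and of the sign of $k$, since $k\neq 0$ and $s_3^{j_0}>0$. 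This sign-free conclusion is exactly how the paper closes the proof (it asserts $Int(\omega_1,T_{2j_0})=Int(\omega_2,T_{2j_0})=0$ and deduces $ks_{j_0}=0$); with that one modification your argument is correct and coincides with the paper's.
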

\begin{proof}
For any case of Figure \ref{f.lem4case}, in $H_1(W)$, 
$$[\omega_1]=[\omega_2]\pm \Sigma_j s_j ([\mu_j]+k[\lambda_j])= [\omega_2]\pm k\cdot \Sigma_j s_j [\lambda_j].$$
There exists a $j_0$ such that $s_{j_0}\neq 0$, therefore
 $$Int (\omega_1, T_{2j_0})= Int (\omega_2, T_{2j_0})\pm k\cdot  \Sigma_j s_j Int(\lambda_j, T_{2j_0}).$$

Recall that  $$Int (\lambda_j, T_{2j})=Int (\alpha_j, T_{2j})=1 ~\mbox{and}~ Int(\lambda_i, T_{2j})= Int (\alpha_i, T_{2j})=0$$ for every $i\neq j$,
then $Int (\omega_1, T_{2j_0})= Int (\omega_2, T_{2j_0})\pm ks_{j_0}$.  Further notice that $Int (\omega_1, T_{2j_0})= Int (\omega_2, T_{2j_0})=0$, hence $ks_{j_0}=0$, which induces a contradiction. The proof of the lemma is complete.
\end{proof}

\subsection{Clusters of lozenges}\label{ss.feap}
\begin{definition}\label{d.eap}
We say that a subset $C_e$ of the orbit space $\cO$ of a $3$-dimensional Anosov flow is  a \emph{cluster of lozenges} (or, for simplicity, \emph{cluster}) if $C_e$ is the union of a set of  lozenges such that every lozenge of $C_e$ is edge-adjacent to another lozenge of $C_e$. 
See Figure  \ref{f.B} for two examples of clusters. 
We say a cluster  $C_e$ is  \emph{maximal} if $C_e$ can not be strictly contained in another  cluster, and at this time we call $C_e$ a \emph{maximal cluster of lozenges} (resp. for simplicity, \emph{maximal cluster}). 
\end{definition}

We say that a cluster  $C_e$ is a \emph{fan-type cluster} 
if $C_e$ is a cluster $\{L_k\mid k \in \ZZ \mbox{ or } k\in \{1,\dots, N\}\}$ such that:
\begin{enumerate}
\item for every $k$, $L_k$ and $L_{k+1}$  are edge-adjacent along either a stable separatrix or an unstable separatrix;
\item for every $k$, if  $L_k$ and $L_{k+1}$ are edge-adjacent along a stable (resp. an unstable) separatrix, then  $L_{k+1}$ and $L_{k+2}$  are edge-adjacent along an unstable
(resp. a stable) separatrix;
\item every three lozenges $L_{k_1}$, $L_{k_2}$ and $L_{k_3}$  never share a common vertex. 
\end{enumerate}
For instance, the right part of Figure  \ref{f.B} is a fan-type cluster with $7$ lozenges,  and the chain of lozenges $C$ in Lemma \ref{l.clzY} (see Figure \ref{f.infL}) is   a fan-type cluster with infinitely many lozenges.

 \begin{figure}[htp]
\begin{center}
  \includegraphics[totalheight=4.5cm]{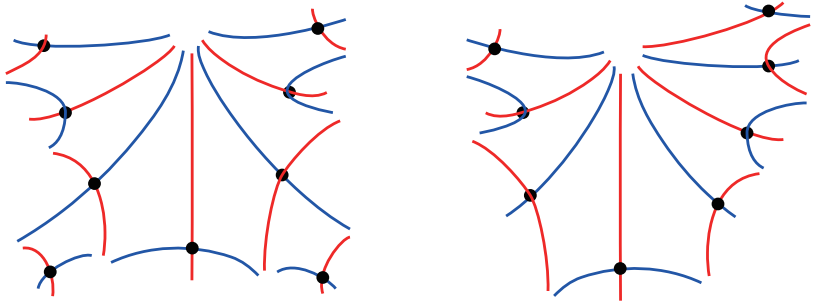}\\
  \caption{Left: a cluster;  Right: a fan-type cluster  with $7$ lozenges}\label{f.B}
\end{center}
\end{figure}

\begin{lemma}\label{l.oldfan}
For every $i\in \{1,\dots, 4n\}$, there exists a chain of lozenges $C$ in the orbit space $\cO_m$ of $Z_t^m$ and a fan-type cluster $C_i\subset C$ such that:
\begin{enumerate}
\item $C_i$ consists of $4i+3$ lozenges;
\item $C_i$ corresponds to the $(4i+3)$ ordered fundamental Birkhoff annuli in $T_i'$
that are disjoint with $\alpha_{[\frac{i+1}{2}]}$. Here $C_i$ is the projection in the orbit space one connected component
of the lift of  the $(4i+3)$ fundamental Birkhoff annuli.
\end{enumerate}
\end{lemma}

\begin{proof}
The periodic orbit  $\alpha_{[\frac{i+1}{2}]}$ intersects $T_i$ at one point disjoint with the compact leaves of $\cL_i^s$ and $\cL_i^u$, see Theorem~\ref{t.con}. Therefore, by Lemma \ref{l.Tip}, $\alpha_{[\frac{i+1}{2}]}$ intersects $T_i'$
at one point inside a fundamental Birkhoff annulus, say $A_{4i+4}$, of $T_i'$.
Define the union of the left $(4i+3)$ fundamental Birkhoff annuli by $\Sigma:=A_1 \cup\dots\cup A_{4i+3}$, that is a compact annulus.
We can also think that $\Sigma$ is in $M$. By using the same techniques with the 
proof of Lemma \ref{l.clzY}, there exists a fan-type cluster $C_i$
that satisfies the conditions in Lemma \ref{l.oldfan}. Let $C$ be the chain of lozenges
that is extended from $C_i$. Then $(C, C_i)$ satisfies the conditions of the lemma.
\end{proof}

 \begin{figure}[htp]
\begin{center}
  \includegraphics[totalheight=4.6cm]{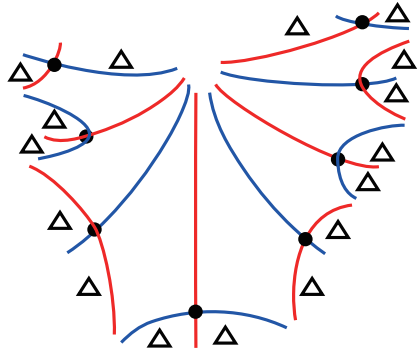}\\
  \caption{$8i+8$ possible places for a new Lozenge edge-adjacent to $C_i$ in the case $i=1$}\label{f.D}
\end{center}
\end{figure} 

\begin{remark}\label{r.oFEAP}
We call a fan-type cluster $C_i$ in Lemma \ref{l.oldfan} an \emph{old fan-type cluster}. Notice that due to its definition, an old fan-type cluster $C_i$ contains $4i+3$ lozenges for some $i=1,\dots,4n$.
\end{remark}

Generally, there are $8i+8$ possible places for a new Lozenge edge-adjacent to
an old fan-type cluster $C_i$, see Figure \ref{f.D}.  We will use this observation to 
 describe the maximal clusters of $(M, Z_t^m)$ as follows:

\begin{proposition}\label{p.EAP}
Let $C$ be a chain of lozenges of the orbit space $\cO_m$ of $(M, Z_t^m)$ which contains some maximal clusters, then each maximal cluster $C_i'$ of $C$ is extended from an old  fan-type cluster $C_i$  
by adding $s$ ($0\leq s\leq 8i+8$) new Lozenges edge-adjacent to $C_i$. Here $C_i$ consists of $4i+3$ lozenges for some $i=1,\dots,4n$. 
\end{proposition}
\begin{proof}
The proof consists of  the following two claims:
\begin{enumerate}
  \item[Claim 1] Let $C$ be a chain of lozenges in the orbit space $\cO_m$ of $(M, Z_t^m)$ so that  $C$ contains some maximal clusters, then every maximal cluster $C_i'$ of $C$ contains  an old fan-type cluster $C_i$ as a  subset.
  \item[Claim 2] Every maximal cluster $C_i'$ is extended from an old  fan-type  cluster $C_i$
by adding $s$ ($0\leq s\leq 8i+8$) new Lozenges edge-adjacent to $C_i$.
\end{enumerate}
It is obvious that the conclusions of  Proposition \ref{p.EAP} can follow from these two claims. From now on, we focus on proving them.

\begin{proof}[The proof of the Claim 1]
We assume by contradiction that there exists a maximal cluster $C_i'$ of some  chain of lozenges $C$ in the orbit space $\cO_m$ such that $C_i'$ does not contain any old fan-type  cluster as a  subset.
Let $L_1$ and $L_2$ be two edge-adjacent lozenges of $C_i'$. By Lemma \ref{l.photoB}, $L_1$ and $L_2$ are associated to two adjacent fundamental Birkhoff annuli $A_1$ and $A_2$ of $Z_t^m$. By  our assumption and the fact that a lift lozenge associated to every old fundamental Birkhoff annulus is contained in  a fan-type maximal cluster\footnote{This follows from Lemma \ref{l.Whomotopy}, Lemma \ref{r.clzY} and the definition of the orbit space $\cO_m$}, each of $A_1$ and  $A_2$ has to be a new fundamental Birkhoff annulus. Then $L_1$ and $L_2$ are two new edge-adjacent lozenges. This is impossible due to Lemma \ref{l.noadjloz}, and therefore Claim 1 is proved.
\end{proof}

\begin{proof}[The proof of the Claim 2]
By Claim 1, every maximal cluster $C_i'$ of a chain of lozenges $C$ containing some maximal clusters must contains  an old fan-type cluster. 

First by Lemma \ref{r.clzY}, we know that every old lozenge of $C_i'$ is contained in an
old fan-type cluster.
Now we prove that $C_i'$ can not contain two old fan-type clusters. Otherwise $C_i'$ contains two old fan-type clusters
$C_i$ and $C_j$. By Lemma \ref{l.noadjloz}, $C_i'$ must contains three lozenges as Figure \ref{f.lem4case} shows. But Lemma \ref{l.noonenew} tells us that this case never happens. Therefore,
 $C_i'$ does not contain two  old fan-type clusters.
 
 Then $C_i'$ can be obtained from extending an old fan-type cluster $C_i$ with some new lozenges. 
 Lemma \ref{l.noadjloz} tells us that a new lozenge $L_0$  must be in the $8i+8$ possible places  edge-adjacent to
$C_i$, as Figure \ref{f.D} shows.  Then Claim 2 can follow from a simple combinatorial discussion depending on these discussions.
\end{proof}
The proof of Proposition \ref{p.EAP} directly follows from Claim 1 and Claim 2. 
\end{proof}

\begin{remark}\label{r.nadd}
Indeed,  not every case    happens   for constructing $C_i'$ from $C_i$ by adding new lozenges  in Proposition \ref{p.EAP}, for instance,
Lemma \ref{l.noadjloz} tells us that it is impossible to add two new lozenges which are edge-adjacent.  
\end{remark}

\section{Separatrix-adjacent Birkhoff annuli}
\label{s.SAa}
We say that two fundamental Birkhoff annuli $A$ and $B$ are \emph{$s$/$u$-separtrix-adjacent} if $A\cap B$ is a periodic orbit $\beta$ of $Z_t^m$ and $A\cup B$ cuts a small tubular neighborhood $U(\beta)$ of $\beta$ to two connected components
such that one component only contains a local stable/unstable separatrix of
$W_{loc}^s(\beta) \cup W_{loc}^u(\beta)$.

\begin{definition}\label{d.SAa}
Let $\Sigma= A_1 \cup \dots \cup A_k$ ($k\in \NN$) be an immersed annulus in $M$ consisting of $k$ fundamental Birkhoff annuli $A_1,\dots, A_k$ of $Z_t^m$ such that
$A_l \cap A_{l+1}$ ($l=1,\dots, k-1$) is a periodic orbit $\beta_l$ of $Z_t^m$.
We say that $\Sigma$ is a  \emph{$k$-components separatrix-adjacent  annulus}
(abbreviated as \emph{$k$-SA annulus}) if $\Sigma$ satisfies the following conditions:
\begin{enumerate}
\item For every $l$, $A_l$ and $A_{l+1}$ are $s$/$u$-separatrix-adjacent. 
\item If  $A_l$ and $A_{l+1}$ are $s$/$u$-separatrix-adjacent, then
$A_{l+1}$ and $A_{l+2}$ are $u$/$s$-separatrix-adjacent.  
\end{enumerate}
\end{definition}

\begin{example}
Recall that $T_i$ ($i=1,\dots, 4n$)  is a transverse torus of $Y_t^m$ and $T_i'$ is a corresponding torus with $4i+3$ fundamental Birkhoff annuli (Lemma \ref{l.Tip}), and 
the Anosov flow $Z_t^m$ on $M$ is obtained from $Y_t^m$ on $W$ by doing the
$k$-DF surgery at each $\alpha_j^m$ ($j=1,\dots, 2n$). After this surgery, one of the  $4i+4$  fundamental Birkhoff annuli of $T_i'$ is destroyed and the undestroyed  $4i+3$ Birkhoff annuli in $T_i'$ form a  $(4i+3)$-SA annulus of $Z_t^m$, named an \emph{old $(4i+3)$-components separatrix-adjacent  annulus} (abbreviated as an \emph{old $(4i+3)$-SA annulus}) of $Z_t^m$. Certainly, an old $(4i+3)$-SA annulus of $Z_t^m$ is disjoint with the surgeried periodic orbits.  We say that the old  $(4i+3)$-SA annulus $\Sigma =A_1 \cup A_2 \cup \dots \cup A_{4i+3}$ is associated to $T_i$ (in $W$).
\end{example}

By using the same techniques with the 
proof of Lemma \ref{l.clzY} and Lemma \ref{l.oldfan}, we can routinely check the following basic relationship between SA annulus and fan-type cluster:
\begin{lemma}\label{l.photo}
For every    $k$-SA annulus $\Sigma= A_1\cup A_2\cup \dots \cup A_k$ of $Z_t^m$, in the orbit space $\cO$ of $Z_t^m$, there is a fan-type cluster $C_e$ that is the union of $k$ lozenges $L_1, L_2, \dots, L_k$ such that $L_i$ corresponds to the orbits of a lift of $A_i$. And vice versa. We call $C_e$ a \emph{photo} of $\Sigma$, see Figure \ref{f.Bphoto}.
\end{lemma}

  \begin{figure}[htp]
\begin{center}
  \includegraphics[totalheight=4.2cm]{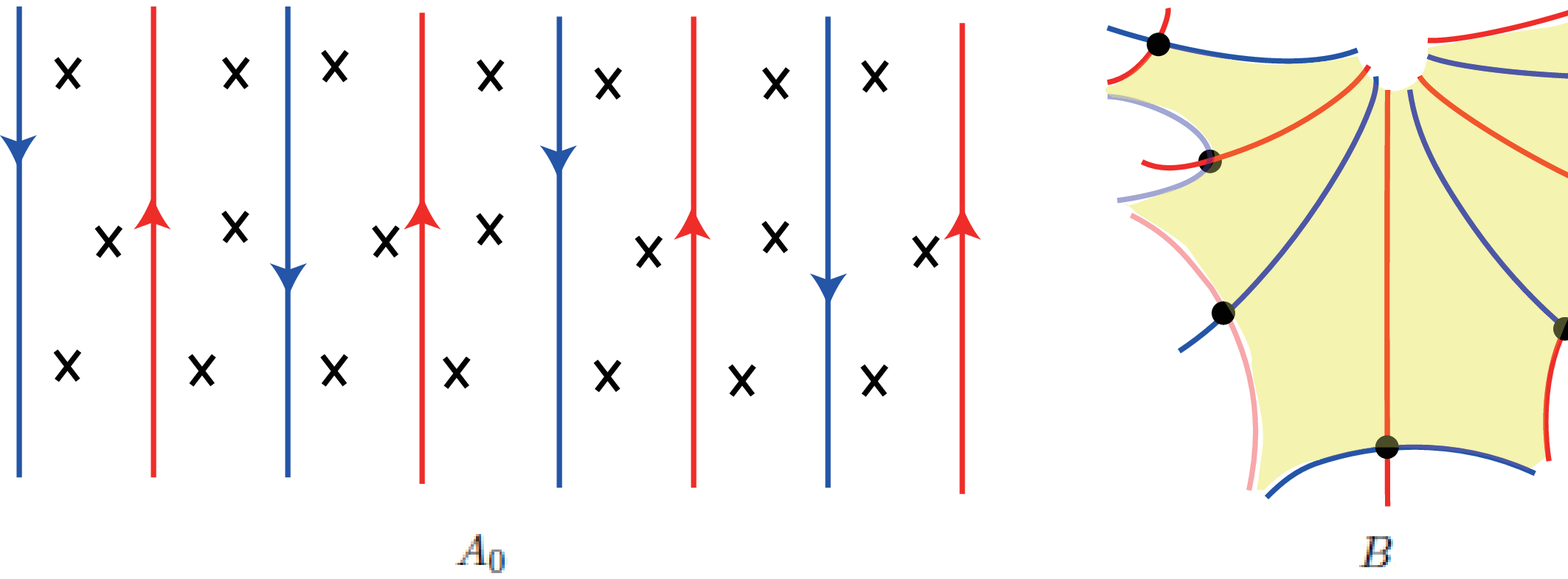}\\
  \caption{The photo of a $7$-SA annulus}\label{f.Bphoto}
\end{center}
\end{figure}

\begin{remark}
\begin{enumerate}
  \item The red (resp. blue) edge-adjacent separatrix in $C_e$ is associated to  an unstable (resp. a stable) separatrix.
  \item  The red  (resp. blue) periodic orbit in the interior of $\Sigma$ is  associated to  an unstable (resp. a stable) separatrix in the orbit space, and
the  colors of the  periodic orbits in $\Sigma$ are defined alternatively.  
\end{enumerate}
\end{remark}

Recall that $W$ is oriented with the orientation induced by the orientation on $U$ (see   Section \ref{ss.orientation} for the definition of the orientation on $U$).
The orientation of $W$ naturally induces an orientation on $M$. From now on, we fix this orientation on $M$. 
Let  $\Sigma= A_1\cup A_2\cup \dots \cup A_k$ be a $k$-SA annulus of $Z_t^m$,
and $q$ be a point in a boundary periodic orbit $\beta$ of $\Sigma$. We define a frame $(\vec{e}_h (q), \vec{e}_p (q), \vec{e}_{in} (q))$ at $q$ (see Figure \ref{f.TEBtyp}) such that:
\begin{enumerate}
    \item $\vec{e}_h (q)$ is vertical to $\beta$, tangent to $\Sigma$ and points outwards to $\Sigma$;
  \item $\vec{e}_p (q)$ defines the same orientation on $\beta$;
  \item $\vec{e}_{in} (q)$ is transverse to $\Sigma$ and is coherent to the orientations of the flowlines intersecting 
  the Birkhoff annulus adjacent to $\beta$.
\end{enumerate}

 \begin{figure}[htp]
\begin{center}
  \includegraphics[totalheight=5cm]{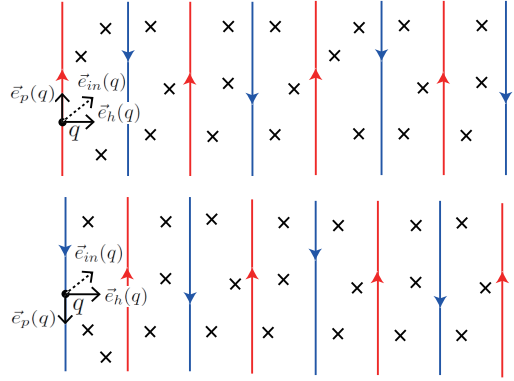}\\
  \caption{Up:  R-type $7$-SA annulus; Down: L-type $7$-SA annulus}\label{f.TEBtyp}
\end{center}
\end{figure}

When $k$ is odd, one can easily observe that

\begin{lemma}\label{l.frcoor}
Let  $\Sigma= A_1\cup A_2\cup \dots \cup A_k$ be a $k$-SA annulus of $Z_t^m$.
If $k$ is odd, for every two points 
 $q_1$ and $q_2$  in the two boundary components of $\Sigma$, 
the two frames $(\vec{e}_h (q_1), \vec{e}_p (q_1), \vec{e}_{in} (q_1))$  and $(\vec{e}_h (q_2), \vec{e}_p (q_2), \vec{e}_{in} (q_2))$ define the same orientation on $M$.
\end{lemma}

In particular, Lemma \ref{l.frcoor} permits us to define the following orientation on a $(4i+3)$-SA annulus.

\begin{definition}\label{d.RLtype}
Let $\Sigma$ be a $(4i+3)$-SA ($i=1,\dots, 4n$) annulus of $Z_t^m$ in $M$. Let $q\in \partial \Sigma$, then we say that $\Sigma$ is (see Figure \ref{f.TEBtyp})
\begin{enumerate}
  \item \emph{$R$-type} if $(\vec{e}_h (q), \vec{e}_p (q), \vec{e}_{in} (q))$  
defines an orientation on $M$ that is reverse to the orientation induced by  $(\vec{e}_1, \vec{e}_2, \vec{e}_3)$ in Section \ref{ss.orientation}. See Figure \ref{f.orientation} for the orientation on $U$ (which induces the orientation on $M$);
  \item \emph{$L$-type} if $( \vec{e}_h (q), \vec{e}_p (q),\vec{e}_{in} (q))$   defines the same orientation on $M$ induced by  $(\vec{e}_1, \vec{e}_2, \vec{e}_3)$.
\end{enumerate}
\end{definition}

\begin{example}\label{e.LRtype}
Let $\Sigma_i$ ($i=1,\dots, 4n$) be an old $(4i+3)$-SA annulus of $(M,Z_t^m)$ associated to $T_i$. 
Due to item 2 of Theorem \ref{t.con}, the periodic orbit $\alpha_j^m$ ($j=1,\dots, 2n$)
of $Y_t^m$ satisfies that:
\begin{enumerate}
\item $\alpha_j^m$ intersects the torus $T_{2j-1}$ at a point  in $\pi^m (A_{2j-1}^{s,1}) \cap \pi^m (A_{2j-1}^{u,1})$; 
\item $\alpha_j^m$ intersects the torus $T_{2j}$ at a point  in $\pi^m (A_{2j}^{s,1}) \cap \pi^m (A_{2j}^{u,1})$.
\end{enumerate}
See Figure \ref{f.1} as an illustration. Then 
due to the construction of $Z_t^m$ (Section \ref{s.conh}), we know that
\begin{enumerate}
\item  if $j\leq m$, $\Sigma_{2j-1}$ is L-type and $\Sigma_{2j}$ is R-type; 
\item   if $j>m$, $\Sigma_{2j-1}$ is R-type and $\Sigma_{2j}$ is L-type.
\end{enumerate}
\end{example}

We suppose that $h: M\to M$ is an orbit-preserving homeomorphism between $Z_t^{m_1}$ and $ Z_t^{m_2}$ where $m_1<m_2 \in \{0, 1,\dots, 2n\}$. The main result of this section is

\begin{proposition}\label{t.hoTEB}
Let $\Sigma =A_1 \cup A_2 \cup \dots \cup A_{4i+3}$  ($i=1,\dots,4n$) be an old 
$(4i+3)$-SA annulus of $Z_t^{m_1}$ associated to $T_i$, then we have:
\begin{enumerate}
\item If $h$ is orientation-preserving, then $h(\Sigma)$ is the unique old $(4i+3)$-SA annulus of $(M, Z_t^{m_2})$.
\item If $h$ is orientation-reversing and $i\in (m_1,  m_2]$, then $h(\Sigma)$ is the unique old $(4i+3)$-SA annulus of $Z_t^{m_2}$.
\item  If $h$ is orientation-reversing and  $i\in (0,m_1] \cup (m_2,2n]$, then $h(\Sigma)$ can be extended to a $(4i+4)$-SA annulus $\Sigma'$ of $ Z_t^{m_1}$, and moreover 
$h(\Sigma)$ is the union of an old $(4i+2)$-SA annulus of $Z_t^{m_2}$ associated to $T_i$
and a new fundamental Birkhoff annulus.
  \end{enumerate}
 \end{proposition}

To prove Proposition \ref{t.hoTEB}, we need more preparations. Firstly, we have 

 \begin{figure}[htp]
\begin{center}
  \includegraphics[totalheight=4.2cm]{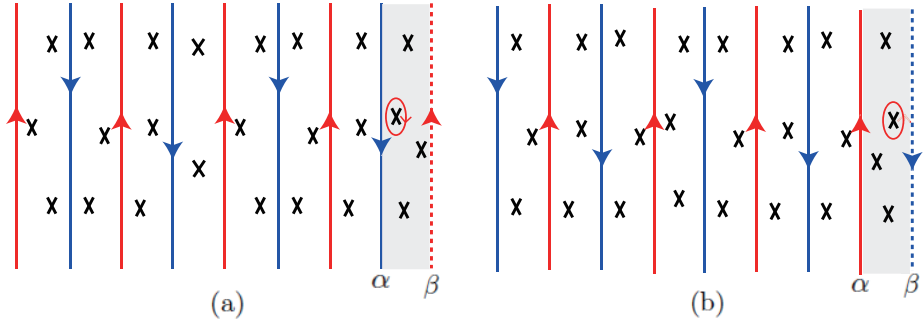}\\
  \caption{$(4i+4)$-SA annulus extended from an old $(4i+3)$-SA annulus}\label{f.ETEB}
\end{center}
\end{figure}

\begin{lemma}\label{l.no4EB}
\begin{enumerate}
  \item If $k>0$, there does not exist any $(4i+4)$-SA annulus $\Sigma'$ which is an extension of an $R$-type old $(4i+3)$-SA annulus $\Sigma$.
  \item If $k<0$, there does not exist any $(4i+4)$-SA annulus $\Sigma'$ which is an extension of an $L$-type old $(4i+3)$-SA annulus $\Sigma$.
\end{enumerate}
 \end{lemma}
 \begin{proof}
 We assume by contradiction the existences of $\Sigma$ and $\Sigma'$.
Suppose that $\alpha$ and $\beta$ bound the new Birkhoff annulus $A_N$ in $\Sigma'$ and
 $\alpha\subset \partial \Sigma$.   We assume that
$\alpha_j $ intersects  $A_N $ $s^j$  times,  then $\Sigma_{j} s^j >0$. 

 First we prove item $1$. See (a) of Figure \ref{f.ETEB},   in $H_1 (W)$, $$-[\beta]=[\alpha]+\Sigma_j s^j [\mu_j +k\lambda_j] = [\alpha]+k\Sigma_j s^j [\alpha_j].$$ Since $\Sigma_{j} s^j >0$, there exists some $T_{2j_0}$ such that $Int(\alpha_{j_0}, T_{2j_0})>0$ in $W$.
 Notice that $Int(\alpha, T_{2j_0})=0$, $Int(\alpha_j, T_{2j_0})\geq0$ and $Int(\beta, T_{2j_0})\geq0$. Then on one hand $Int(-\beta, T_{2j_0})\leq 0$, and on the other hand, $$Int(-\beta, T_{2j_0})= Int(\alpha, T_{2j_0})+ k \Sigma_j Int(\alpha_j, T_{2j})\geq k Int(\alpha_{j_0}, T_{2j_0}) >0$$ since $k >0$ and $Int(\alpha_{j_0}, T_{2j_0})>0$. Thus we get a contradiction, and item 1 is proved.

 The proof of item 2 is similar. In this case, see (b) of Figure \ref{f.ETEB}, following the orientations, $$Int(\beta, T_{2j_0})= Int(-\alpha, T_{2j_0})+ k \Sigma_j Int(\alpha_j, T_{2j_0})\leq k Int(\alpha_{j_0}, T_{2j_0}) <0.$$ This also induces a contradiction, and item 2 is proved. Therefore, the proof of the lemma is complete.
 \end{proof}


Depending on Propsotion \ref{p.EAP} (see Propsotion \ref{p.EAP} and Figure \ref{f.D}), there are exactly four possible maximal fan-type clusters $C_i^m$, $C_i^{u,m}$ , $C_i^{s,m}$ and $C_i^{us,m}$ in $\cO_m$ such that the number of lozenges no less than $3$, where
\begin{enumerate}
\item $C_i^m$ is an old fan-type cluster with $4i+3$ ($i=1,\dots,4n$) lozenges;
\item $C_i^{u,m}$ consists of $4i+4$ lozenges and is extended from  $C_i^m$ along an unstable 
separatrix in the boundary of $C_i^m$;
\item $C_i^{s,m}$ consists of $4i+4$ lozenges and is extended from  $C_i^m$ along a stable 
separatrix in the boundary of $C_i^m$;
\item $C_i^{us,m}$ consists of $4i+5$ lozenges and is extended from  $C_i^m$ along an unstable separatrix and a stable separatrix in the boundary of $C_i^m$. 
\end{enumerate}
 
See Figure \ref{f.D1D2} for $C_i^m$, $C_i^{u,m}$, $C_i^{s,m}$ and $C_i^{us,m}$.
  \begin{figure}[htp]
\begin{center}
  \includegraphics[totalheight=3.13cm]{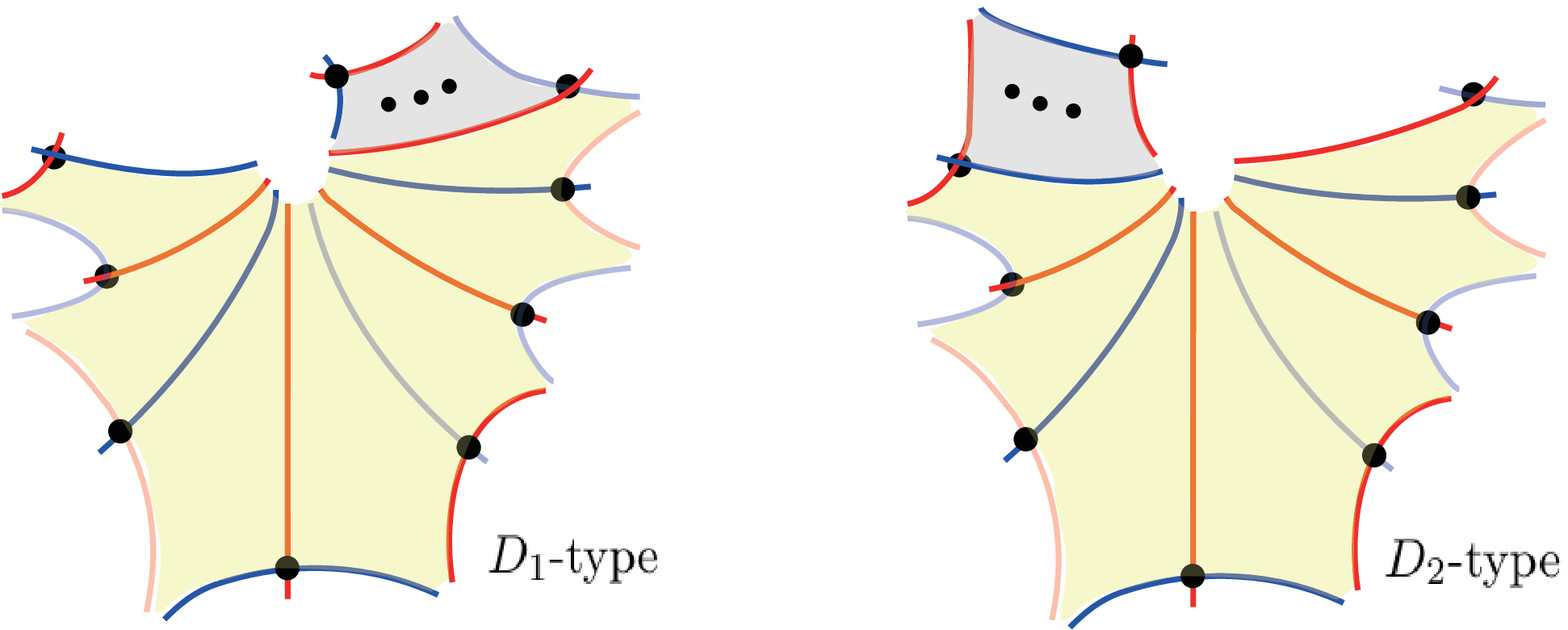}\\
  \caption{From left to right: $C_i^m$, $C_i^{u,m}$, $C_i^{s,m}$ and $C_i^{us,m}$}\label{f.D1D2}
\end{center}
\end{figure}

We further assume that $\cO_{m_1}$ and $\cO_{m_2}$ are the orbit spaces of $Z_t^{m_1}$ and $Z_t^{m_2}$ respectively, and $\overline{h}: \cO_{m_1} \to \cO_{m_2}$ is a homeomorphism commutative with $\pi_1 (M)$ which is induced by $h$. 
Let $C_e^{m_1}$ be a maximal cluster of $\cO_{m_1}$, then
$C_e^{m_2}=\overline{h}(C_e^{m_1})$  is also a maximal cluster of the orbit space $\cO_{m_2}$. Moreover, suppose $C_i^{m_1}$ and $C_j^{m_2}$ are the two old fan-type clusters in $C_e^{m_1}$ and $C_e^{m_2}$ respectively. 
The action of $\overline{h}$ on  an old fan-type cluster  can be well understood:
\begin{lemma}\label{l.hEAP}
Both $C_i^{m_1}$ and $C_j^{m_2}$ consist of $4i+3$ lozenges for the same $i\in \{1,\dots, 4n\}$, i.e. $C_j^{m_2}=C_i^{m_2}$. $\overline{h}$ should be one of the following two cases:
\begin{enumerate}
  \item $\overline{h}(C_i^{m_1})=C_i^{m_2}$.
  \item $C_i^{m_1}\subset C_i^{u,m_1}$ (or $C_i^{m_1}\subset C_i^{s,m_1}$) and $\overline{h} (C_i^{u,m_1})= C_i^{u,m_2}$ (resp. $\overline{h} (C_i^{s,m_1})= C_i^{s,m_2}$).
Moreover, $\overline{h}(C_i^{m_1})$ is the union of $4i+2$ old lozenges of $C_i^{m_2}$ and the new lozenge of $C_i^{u,m_2}$ (resp. $C_i^{s,m_2}$). 
\end{enumerate}
\end{lemma}
\begin{proof}
Suppose that $C_i^{m_1}$  consists of $4i+3$ lozenges for some $i\in \{1,\dots, 4n\}$ and $C_e^{m_1}$ is the maximal fan-type cluster in $\cO_{m_1}$  that contains $C_i^{m_1}$. Then due to the previous discussion,  the number of the lozenges in  $C_e^{m_1}$ belongs to $\{4i+3, 4i+4, 4i+5\}$. Observe that $C_e^{m_2}=\overline{h}(C_e^{m_1})$ must also be a maximal fan-type cluster in $\cO_{m_2}$. Due to these facts and the above descriptions of maximal fan-type clusters, 
 one can easily get that the old fan-type cluster $C_j^{m_2}$ in $C_e^{m_2}$ also consists of $4i+3$ lozenges.

Now we only need to prove that if $\overline{h}(C_i^{m_1})\neq C_i^{m_2}$, then $\overline{h}$ must satisfy the conclusion in item $2$ of the lemma. From now on, we assume that $\overline{h}(C_i^{m_1})\neq C_i^{m_2}$.

As an illustration of the discussion below, we refer the reader to see Figure \ref{f.D1D2}. Firstly, $C_e^{m_1} \neq C_i^{m_1}$ since otherwise $\overline{h}(C_i^{m_1})= C_i^{m_2}$ that contradicts our hypothesis. Now we prove that $C_e^{m_1} \neq C_i^{us,m_1}$. We assume by contradiction that  $C_e^{m_1} = C_i^{us,m_1}$. Due to the fact that there are only four possible fan-type clusters with the number of lozenges
no less than $3$ and $\overline{h}$ must preserve maximal   fan-type clusters, then in this case $\overline{h}(C_e^{m_1})= C_i^{us,m_2}$ and therefore 
$\overline{h}(C_i^{m_1})= C_i^{m_2}$ since $C_i^{m_j}$ ($j=1,2$) is in the middle of $C_e^{m_j}$. This also contradicts our hypothesis. Now we claim that 
\begin{enumerate}
  \item if $C_e^{m_1}= C_i^{u,m_1}$, then  $\overline{h}(C_e^{m_1})= C_i^{u,m_2}$;
\item if $C_e^{m_1}= C_i^{s,m_1}$, then  $\overline{h}(C_e^{m_1})= C_i^{s,m_2}$.
\end{enumerate}
Without loss of generality, we only need to prove the first case. We assume by contradiction  that 
$C_e^{m_1}= C_i^{u,m_1}$ and  $\overline{h}(C_e^{m_1})\neq C_i^{u,m_2}$. Then because of the same reason as before, 
$\overline{h}(C_e^{m_1})= C_i^{s,m_2}$. Further observe that both of the two adjacent-edges in $C_i^{u,m_1}$ adjacent to the two end lozenges of $C_i^{u,m_1}$ correspond to two unstable separatrices, while   both of the two adjacent-edges in $C_i^{s,m_2}$ adjacent to the two end lozenges of $C_i^{s,m_2}$ correspond to two stable separatrices. Then $\overline{h}(C_i^{u,m_1})\neq C_i^{s,m_2}$ since $\overline{h}$ must preserve  the types of  adjacent-edges. We get a contradiction. 

From now on, we only need to discuss the two cases of the claim. Futhermore without loss 
of generality, we only need to consider the first case of the claim: 
$C_e^{m_1}= C_i^{u,m_1}$ and  $\overline{h}(C_e^{m_1})= C_i^{u,m_2}$. Notice that:
\begin{enumerate}
\item $C_i^{m_j}\subset C_i^{u,m_j}$ ($j=1,2$) and each of $C_i^{m_j}$ and $C_i^{u,m_j}$ is a fan-type cluster;
\item $C_i^{m_j}$ consists of $4i+3$ lozenges and $C_i^{u,m_j}$ consists of $4i+4$ lozenges;
\item $\overline{h}(C_i^{m_1})\neq C_i^{m_2}$.
\end{enumerate}
Then one can easily observe that $\overline{h}(C_i^{m_1})$ is the union of 
$4i+2$ old lozenges of $C_i^{m_2}$ and the new lozenge in $C_i^{u,m_2}$. The proof of item 2 of the lemma is complete.
\end{proof}

\begin{proof}[Proof of Proposition \ref{t.hoTEB}]
By Lemma \ref{l.hEAP}, there are two cases for $\overline{h} (C_i^{m_1})$.
The first case is that $\overline{h} (C_i^{m_1})=C_i^{m_2}$. This means that a lift connected component of $h(\Sigma)$ corresponds to $C_i^{m_2}$ in $\cO_2$. Then
due to the natural correspondence between lozenge and fundamental Birkhoff annulus (see Lemma \ref{l.photoB} and Lemma \ref{l.photo}), we have that $h(\Sigma)$ is the (unique)
old $(4i+3)$-SA annulus of $Z_t^{m_2}$.

Now we prove item 1 of the proposition. Due to the the above discussion, we only need
to prove that if $h$ is orientation-preserving, then the second case of Lemma \ref{l.hEAP} for $\overline{h} (C_i^{m_1})$ does not happen. Due to Lemma \ref{l.hEAP}, without loss of generality, we can assume by contradiction
that  $C_i^{m_1}\subset C_i^{u,m_1}$, $\overline{h} (C_i^{u,m_1})= C_i^{u,m_2}$ and $\overline{h}(C_i^{m_1})$ is the union of $4i+2$ old lozenges of $C_i^{m_2}$ and the new lozenge of $C_i^{u,m_2}$. By Lemma \ref{l.photo},
 there exists a $(4i+4)$-SA annulus $\Sigma'$ of $Z_t^{m_1}$ that is an extension of the  old $(4i+3)$-SA annulus $\Sigma$ and  $h(\Sigma)$ is not an old $(4i+3)$-SA annulus of $ Z_t^{m_2}$. Notice that: 
\begin{enumerate}
\item Since $h$ is orientation-preserving, we have that if $\Sigma$ is R-type (or L-type), then $h(\Sigma)$ is also R-type (resp. L-type).
\item $h(\Sigma)\subset h(\Sigma')$ where $h(\Sigma')$ is a $(4i+4)$-SA annulus of $Z_t^{m_2}$ that is the union of the old $(4i+3)$-SA annulus $\Sigma_2$ and a new fundamental Birkhoff annulus.
\item Due to Lemma \ref{l.hEAP} and  Lemma \ref{l.photo}, $h(\Sigma)$ is the union of an old $(4i+2)$-SA annulus in $\Sigma_2$ and the new fundamental Birkhoff annulus.
\end{enumerate}
Then one can routinely check that $\Sigma_2$ is L-type (or R-type). But by  Lemma \ref{l.no4EB},  it can not happen that there exsit two $(4i+4)$-SA annuli of $Z_t^{m_1}$
and $Z_t^{m_2}$ respectively such that one is extended from an L-type old $(4i+3)$-SA annulus and the other is  extended from an R-type old $(4i+3)$-SA annulus. We get a contradiction, and therefore item 1 of the proposition is proved.

Now we turn to prove item $2$. In this case $i\in (m_1, m_2]$, by our construction  the old  $(4i+3)$-SA annulus $\Sigma$  of $Z_t^{m_1}$ associated to $T_i$ and the old $(4i+3)$-SA annulus $\Sigma_2$ of $Z_t^{m_2}$ associated to $T_i$ are  two different types (see Example \ref{e.LRtype}). Similar to the discussion before, further combined with  Lemma \ref{l.hEAP} and  Lemma \ref{l.photo}, we have that if item 2 of Lemma \ref{l.hEAP}  happens, then $\Sigma$ and $h(\Sigma)$  are in the same type. This contradicts the fact that $h$ is orientation-reversing. And then the only possibility is that item 1 of Lemma \ref{l.hEAP} happens, which means that $h(\Sigma)$ is the old $(4i+3)$-SA annulus of $Z_t^{m_2}$. Item 2 of the proposition is proved.

The proof of item 3 of the proposition is very similar to the proof of item 2, so we just give a brief proof here. 
Similarly, when $i\in (0,m_1] \cup (m_2,2n]$,  by our construction the old  $(4i+3)$-SA annulus $\Sigma$  of $Z_t^{m_1}$ associated to $T_i$ and the old $(4i+3)$-SA annulus $\Sigma_2$ of $Z_t^{m_2}$ associated to $T_i$ are  the same type. Then item 1 of Lemma \ref{l.hEAP} never happens since $h$ is orientation-reversing.  This implies that  this case corresponds to item 2 of Lemma \ref{l.hEAP}, which means that $\Sigma$ is contained in a $(4i+4)$-SA annulus $\Sigma'$ of $Z_t^{m_1}$.  Item $3$ is proved, and therefore the proof of the proposition is complete.
\end{proof}

\section{The proof of the main theorem: Theorem \ref{t.main}}
\label{s.dif}

\begin{proof}[The proof of Theorem \ref{t.main}]
For every $n\in \NN$, for some $k$ with $|k|$ big enough, $Z_t^{1}, \dots, Z_t^{2n-1}$ are $2n-1$ ($2n-1\geq n$) Anosov flows on $M$. By Proposition \ref{p.hyperbolic}, $M$ is a hyperbolic $3$-manifold. Let $h: M\to M$ be an orbit-preserving homeomorphism between the two Anosov flows $Z_t^{m_1}$ and $ Z_t^{m_2}$ ($m_1<m_2 \in \{1,\dots, 2n-1\}$). 

If $h$ is orientation-preserving, by item $1$
of Proposition \ref{t.hoTEB}, up to isotopy along flowlines, $h$ maps an old $(4i+3)$-SA annulus  of  $Z_t^{m_1}$ associated to $T_i$ to the corresponding unique old   $(4i+3)$-SA annlus of  $Z_t^{m_2}$ associated to $T_i$. Since $h$ is orientation-preserving, $h$ preserves the types of the corresponding old $(4i+3)$-SA annuli. By our construction (see Example \ref{e.LRtype}), for every $i\in (m_1, m_2]$,  the two
$(4i+3)$-SA annuli of  $Z_t^{m_1}$ and $ Z_t^{m_2}$  are not co-oriented, i.e. one is L-type and the other is R-type. 
But $h$ is orientation-preserving, then
we get a contradiction. This means that there does not exist an orientation-prserving homeomorphism $h:M\to M$ which preserves the orbits of $Z_t^{m_1}$ and $ Z_t^{m_2}$.

Now we suppose that $h$ is orientation-reversing. Since $m_1<m_2\in \{1,\dots, 2n-1\}$, then by Example \ref{e.LRtype} once more, both of the two old $7$-SA annuli of $Z_t^{m_1}$ and $Z_t^{m_2}$ associated to $T_1$ are L-type, and both of the two old $(16n-1)$-SA annuli  of $Z_t^{m_1}$ and $Z_t^{m_2}$ associated to $T_{4n-1}$ are R-type. Let $\Sigma_1$ be the old $7$-SA annulus  of $Z_t^{m_1}$ associated to $T_1$  and $\Sigma_{4n-1}$  be the old $(16n-1)$-SA annulus  of $Z_t^{m_1}$ associated to $T_{4n-1}$. Certainly $\Sigma_1$ is L-type and $\Sigma_{4n-1}$ is R-type.   By item $3$ of  Proposition \ref{t.hoTEB}, $\Sigma_1$ can be extended to a $8$-SA annulus of 
$Z_t^{m_1}$ and $\Sigma_{4n-1}$ can be extended to a $16n$-SA annulus of 
$Z_t^{m_1}$. This is impossible due to Lemma \ref{l.no4EB} and the fact that $\Sigma_1$ is an R-type old $7$-SA annulus and $\Sigma_{4n-1}$ is an L-type old $(16n-1)$-SA annulus.

Then the  orbit-preserving homeomorphism  $h$ does not exist whenever
$h$  is orientation-preserving or not. 
This means that if $m_1 <m_2 \in \{1,\dots, 2n-1\}$, $Z_t^{m_1}$ and $Z_t^{m_2}$ are not orbitally equivalent.  The proof of Theorem \ref{t.main} is complete.
\end{proof}

\vskip 1cm

\noindent Fran\c cois B\'eguin\\
{\small LAGA\\UMR 7539 du CNRS}\\
{\small Universit\'e Paris 13, 93430 Villetaneuse, FRANCE}\\
{\footnotesize{E-mail: beguin@math.univ-paris13.fr}}

\vskip 2mm

\noindent Bin Yu\\
{\small School of Mathematical Sciences,}\\
{\small Key Laboratory of Intelligent Computing and Applications(Ministry of Education), }\\
{\small Tongji University, Shanghai 200092, CHINA}\\
\noindent{\footnotesize{E-mail: binyu1980@gmail.com }}

\end{document}